\numberwithin{equation}{section}
\theoremstyle{plain}
\newtheorem{theorem}{Theorem}[section]
\newtheorem{lemma}[theorem]{Lemma}
\newtheorem{corollary}[theorem]{Corollary}
\newtheorem{proposition}[theorem]{Proposition}
\theoremstyle{definition}
\newtheorem{problem}[theorem]{Problem}
\theoremstyle{remark}
\newtheorem{remark}[theorem]{Remark}
\newtheorem{case[theorem]}{Case}
\def\norm#1.#2.{\lVert#1\rVert_{#2}}
\title[Orthonormal  Strichartz inequalities for  the $\Delta_{k, a}$ and  Dunkl operator]{Orthonormal  Strichartz inequalities for the  $(k, a)$-generalized Laguerre operator and  Dunkl operator
}
\author{Shyam Swarup Mondal} 
\author{Manli Song}
\address{ \endgraf Department of Mathematics, Indian Institute of Technology Delhi, Delhi 11016, India} 
\email{mondalshyam055@gmail.com}
\address{School of Mathematics and Statistics, Northwestern Polytechnical University, Xi’an, Shaanxi 710129, China}
\email{mlsong@nwpu.edu.cn} 
\keywords{Dunkl  operator; Generalized Laguerre operator; Restriction theorem; Strichartz inequalities; Schr\"odinger
	equations}
\subjclass[2010]{Primary 35Q41, 47B10; Secondary 35B65, 35P10.}
\date{\today}
\begin{document}
	
	\maketitle

	\allowdisplaybreaks

	\begin{abstract} Let $\Delta_{k,a}$ and   $\Delta_k $  be the $(k,a)$-generalized Laguerre operator  and   the Dunkl Laplacian operator  on $\mathbb{R}^n$, respectively.   The aim of this article is twofold.  First,   we prove a restriction theorem for  the Fourier-$\Delta_{k,a}$ transform. Next,  as an application of the restriction problem, we  establish   Strichartz    estimates	  for orthonormal families of initial data    for the Schr\"odinger propagator  $e^{-i t \Delta_{k, a}} $ associated with     the operator  $ \Delta_{k, a}$.   	 	Further, using the classical Strichartz estimates for the free Schr\"odinger propagator $e^{-i t \Delta_{k, a}} $  for  orthonormal systems of initial data and the kernel relation  between the semigroups $e^{-i t \Delta_{k, a}}$ and $e^{i \frac{t}{a}\|x\|^{2-a} \Delta_{k}},$ we prove   Strichartz estimates for orthonormal systems of initial data    associated with  the Dunkl operator $ \Delta_k $ on $\mathbb{R}^n$.  Finally,  we present some applications  to our   aforementioned results.

	\end{abstract}
	\tableofcontents 	
	
	\section{Introduction}
	One of the classical problems in harmonic analysis is the so-called restriction theorem. Historically, the restriction problem originated from studying the boundedness of Fourier transform of an $L^p$-function in the Euclidean space $\mathbb{R}^n$ for some $n  \geq2$. Later, it was realized that it also arose naturally in other contexts, such as nonlinear PDE and the study of eigenfunctions of the Laplacian. However, we recall the most simplest case. For a Schwartz class function  $f$ on $\mathbb{R}^n$, the  Fourier transform and the inverse Fourier transform of $f$ are defined as 
	$$
	\mathcal{F}(f)(\xi)=(2 \pi)^{-n} \int_{\mathbb{R}^n} f(x) e^{-i x\cdot \xi } d x, \quad   \xi \in \mathbb{R}^{n}  ,
	$$ and $$\mathcal{F}^{-1}(f)(x)=\int_{\mathbb{R}^n} f(\xi) e^{i x\cdot \xi} d \xi, \quad   x \in \mathbb{R}^{n} ,$$ respectively. Then the classical restriction problem is stated as follows.
	
	\begin{problem}\label{1.1}
		Given a surface $S$ embedded in $\mathbb{R}^{n}$ with $n \geq 2$, for which exponents $1 \leq p \leq 2,~1 \leq q \leq \infty$, the Fourier transform of a function $f \in L^{p}\left(\mathbb{R}^{n}\right)$ belongs to $L^{q}(S)$, where $S$ is endowed with its $(n-1)$-dimensional Lebesgue measure $d \sigma$? i.e.,
		\begin{equation*}
			\|f\|_{L^{q}(S)}\leq C\|f\|_{L^{p}\left(\mathbb{R}^{n}\right)}?
		\end{equation*}
	\end{problem}

	The restriction theorem for the Fourier transform plays an important role in harmonic analysis as well as in the theory of partial differential equations. To understand the general oscillatory integral operators, it plays a crutial role.    Moreover, it  is also connected to many other important problems in mathematical analysis, geometric measure theory,  combinatorics, harmonic analysis, number theory, including several  conjectures such as the Bochner-Riesz conjecture, Kakeya conjecture, the estimation of solutions to the wave, Schr\"odinger, and Helmholtz equations, and the local smoothing conjecture for PDE's \cite{tao}.
	
	The case in which  the restriction problem often considered in the literature is $q=2$. There are only two types of surfaces for which this problem has been completely settled, namely, smooth compact surfaces with non-zero Gauss curvature and quadratic surfaces. In this direction, an affirmative answer to Fourier restriction problem for compact surfaces with non-zero Gaussian curvature can be seen in the works of Stein and Tomas  \cite{Stein,T1,T2}. On the other hand,   for  quadratic  surfaces, Strichartz \cite{st}  gave a complete charecterization  by spliting the surface    into three categories such as  parabloid-like, cone-like, or sphere-like.   We refer to the excellent survey of    Tao \cite{tao} for a   detailed study on the historical background  of the restriction problem.
	
	The main aim of this  paper is  to investigate the validity of Problem \ref{1.1} for the  Fourier-$\Delta_{k, a}$ transform and obtain Strichartz type estimates for a system of orthonormal functions for the $(k,a)$-generalized Laguerre operator $\Delta_{k, a}$ and Dunkl Laplacian operator  $\Delta_{k}$ on $\mathbb{R}^n$.

	\subsection{Restriction problem for the    Fourier-$\Delta_{k, a}$ transform}   We refer to Subsection \ref{dunklop} for the notational convention used in this subsection. 
	
	Let $f\in L^1_{k,a}(\mathbb{R}^n)$. Suppose  $a>0 $ and $ k\in\mathcal{K}^+$ such that  $a+2\gamma+n-2>0$. Then the   $(k, a)$-generalized Laguerre  transform of $f$ is defined by 
	$$\hat{f}(\ell, m , j ):=\int_{\mathbb{R}^n}f(x)\Phi_{\ell,m,j}^{(a)}(x)\,v_{k, a}(x) dx, \quad  \ell,m\in\mathbb{N}, j\in J_m,$$
	where $\Phi_{\ell,m,j}^{(a)}, v_{k, a}$ and $J_m$ are all defined in Subsection \ref{dunklop}. Let $\mathcal{A}:=\{(\ell, m , j ) : \ell, m\in \mathbb{N} , j\in J_m  \}$ be a discrete set with respect to the counting measure. If $f\in L^2_{k,a}(\mathbb{R}^n)$, then $\hat{f}(\ell, m , j ) \in \ell^2(\mathcal{A})$ and the Plancherel formula is of the form  $$\|f\|_{ L^2_{k,a}(\mathbb{R}^n)}^2= \sum_{(\ell, m , j ) \in \mathcal{A}}|\hat{f}(\ell, m , j ) |^2.$$
	The inverse  $(k, a)$-generalized Laguerre transform is given by $$f(x):= \sum_{(\ell, m , j) \in   \mathcal{A}} \hat{f}(\ell, m , j )\Phi_{\ell,m,j}^{(a)}(x) ,$$
	for all $f\in \mathcal{S}(\mathbb{R}^n, v_{k, a}(x)dx).$
	
	Strichartz \cite{st} first observed that restriction theorems for some quadradic surfaces are closely related to space-time decay estimates (also called Strichartz estimates) for some envolution equations.  If we consider the free Schr\"odinger equation associated with the $(k, a)$-generalized Laguerre operator   (see \eqref{100}), then the solution is given by  $u(t,x)=e^{-i t\Delta_{k, a}} f(x)$ and the $L_{k,a}^p(\mathbb{R}^n)$ norm is $\pi$-periodic in $t$ variable. 	Thus by Strichartz \cite{st}, the Strichartz estimate for the  Sch\"odinger propagator $e^{-i t\Delta_{k, a}} $ is also reduced to the restriction theorem on $ \mathbb{T}\times \mathbb{R}^{n},$ where $\mathbb{T}=(-\pi, \pi)$. Thus, in order to obtain the Strichartz inequality for the solution $u(t,x)=e^{-i t\Delta_{k, a}} f(x)$,  we need to introduce the Fourier-$\Delta_{k,a}$ transform on $\mathbb{T}\times \mathbb{R}^{n}$.  
	
	For any $F \in L^{1}\left(\mathbb{T}, L^1_{k,a}(\mathbb{R}^n)\right)$, the Fourier-$\Delta_{k,a}$ transform of $F$ is given by
	$$
	\hat{F}(\nu, \ell, m , j ):=\frac{1}{2 \pi} \int_{\mathbb{R}^{n}} \int_{\mathbb{T}} F(t,x) \Phi_{\ell,m,j}^{(a)}(x) e^{i t \nu}\, d t \,v_{k, a}(x) dx, \quad \forall \nu \in \mathbb{Z}, (\ell, m , j ) \in\mathcal{A}.
	$$
	If $F \in L^{2}\left(\mathbb{T}, L^2_{k,a}(\mathbb{R}^n)\right)$, then $\{ \hat{F}(\nu, \ell, m , j)\}_{(\nu, \ell, m , j) \in \mathbb{Z}\times \mathcal{A}} \in \ell^{2}\left(\mathbb{Z}\times \mathcal{A}\right)$, and the Plancherel formula is of the form
	$$
	\|F\|_{L^{2}\left(\mathbb{T}, L^2_{k,a}(\mathbb{R}^n)\right)}=\sqrt{2 \pi}\|\{\hat{F}(\nu,\ell, m , j)\}\|_{\ell^{2}\left(\mathbb{Z}\times \mathcal{A}\right)} .
	$$
	Moreover, the inverse Fourier-$\Delta_{k,a}$ transform is defined as  
	$$F(t,x):= \sum_{(\nu,\ell, m , j) \in \mathbb{Z}\times \mathcal{A}} \hat{F}(\nu,\ell, m , j )\Phi_{\ell,m,j}^{(a)}(x)e^{-i t \nu} , \quad (t, x)\in \mathbb{T} \times \mathbb{R}^n.$$
	Given a discrete surface $S$ in $\mathbb{Z}\times \mathcal{A}$, we define the restriction operator $\mathcal{R}_S$ as $$\mathcal{R}_SF:=\{ \hat{F}(\nu,\ell, m , j) \}_{(\nu,\ell, m , j)\in S}$$ 
	and the operator dual to $\mathcal{R}_S$ (called the extension Fourier-$\Delta_{k,a}$ operator) as
	\begin{align}\label{extension}
		\mathcal{E}_S(\{\hat{F}(\nu,\ell, m , j)\})(t,x):= \sum_{(\nu,\ell, m , j) \in S} \hat{F}(\nu,\ell, m , j)\Phi_{\ell,m,j}^{(a)}(x)e^{-i t \nu} .
	\end{align}
	Now we consider the following restriction problem:
	
	{\bf Problem 1:} For which exponents of $p, q$ ,$1\leq p,q\leq 2,$ the sequence of the Fourier-$\Delta_{k,a}$ transform   of a function $F\in L^{q}\left(\mathbb{T}, L^p_{k,a}(\mathbb{R}^n)\right)$ belongs to $\ell^2(S)$? i.e., 
	$$\|\mathcal{R}_SF\|_{\ell^2(S)}\leq C \|F\|_{L^{q}\left(\mathbb{T}, L^p_{k,a}(\mathbb{R}^n)\right) }?$$

	\noindent	Let $p'$ and $q'$ are  the congugate exponents of $p$ and $q$, respectively,  then    $\frac{1}{p}+\frac{1}{p^{\prime}}=1$ and $\frac{1}{q}+\frac{1}{q^{\prime}}=1$. Thus by the duality argument, Problem 1 is  equivalent to the  boundedness of the extension operator $\mathcal{E}_{S}$ as  follows:
	
	{\bf Problem 2:} For which exponents of $p, q$ ,$1\leq p,q\leq 2,$ the extension Fourier-$\Delta_{k,a}$ operator is bounded from  $\ell^2(S)$ to  $ L^{q'}\left(\mathbb{T}, L^{p'}_{k,a}(\mathbb{R}^n)\right)$?  i.e., 
	$$ \|	\mathcal{E}_S(\{\hat{F}(\nu,\ell, m , j)\})\|_{L^{q'}\left(\mathbb{T}, L^{p'}_{k,a}(\mathbb{R}^n)\right) }\leq C \|	 (\{\hat{F}(\nu,\ell, m , j)\})\|_{\ell^2(S)}?$$

	\noindent	Since $\mathcal{E}_S$ is bounded from $\ell^2(S)$ to $L^{q'}\left(\mathbb{T}, L^{p'}_{k,a}(\mathbb{R}^n)\right)$ if and only if $\mathcal{T}_S:=\mathcal{E}_S(\mathcal{E}_S)^*$ is bounded from $L^{q}\left(\mathbb{T}, L^{p}_{k,a}(\mathbb{R}^n)\right)$ to $L^{q'}\left(\mathbb{T}, L^{p'}_{k,a}(\mathbb{R}^n)\right)$, then Problem 2 also can be reformed as:

	{\bf Problem 3:} For which exponents of $p, q$ ,$1\leq p,q\leq 2,$ the operator $ \mathcal{T}_{S}$ is bounded from $L^{q}\left(\mathbb{T}, L^{p}_{k,a}(\mathbb{R}^n)\right)$ to $L^{q'}\left(\mathbb{T}, L^{p'}_{k,a}(\mathbb{R}^n)\right)$? i.e.,
	$$
	\left\| \mathcal{T}_{S} F\right\|_{L^{q'}\left(\mathbb{T}, L^{p'}_{k,a}(\mathbb{R}^n)\right)} \leq C\|F\|_{L^{q}\left(\mathbb{T}, L^{p}_{k,a}(\mathbb{R}^n)\right)} ?
	$$ 
	In this paper, we find exponents of $p, q$,$1\leq p,q\leq 2,$ such  that Problem 1 has an affirmative answer when the surface $S \subset \mathbb{Z}\times \mathcal{A}$   is  discrete.

	
	\subsection{Orthonormal  Strichartz inequalities for the  $\Delta_{k, a}$ operator}
	Generalization involving the orthonormal system is strongly motivated by the theory of  many body quantum mechanics.  In quantum mechanics, a system of  $N$ independent fermions in the Euclidean space $\mathbb{R}^{n}$  can be  described by a collection of  $N$ orthonormal functions $u_{1}, \ldots, u_{N}$ in $L^{2}\left(\mathbb{R}^{n}\right)$.  It is  then  essential to obtain functional inequalities on these systems whose behavior is optimal in the finite number $N$ of such orthonormal functions.  For this particular reason, functional inequalities involving a large number of orthonormal functions are very useful in   mathematical analysis of large quantum systems.

	The idea of extending  functional inequalities involving a single function to  a system of orthonormal functions is hardly a new topic. The first initiative work  of such    generalization goes back to the famous work  established by Lieb and Thirring, known as  Lieb-Thirring inequality \cite{lieb, liebb} that generalizes   the Gagliardo-Nirenberg-Sobolev inequality, which is   one of the fundamental tools to  prove  the stability of matter  \cite{lieb}.   In 2013, 	the    classical Strichartz inequality for the Schr\"odinger propagator $e^{it\Delta}$  has  been substantially generalized  for  a system of orthonormal functions in the works of  Frank-Lewin-Lieb-Seiringer \cite{frank}.  Further,   Frank-Sabin \cite{FS} improves this result  by solving an open problem of  \cite{frank}   about the  missing interval  and proved   the full range orthonomal Strichartz inequality  by obtaining  a duality principle in terms of Schatten class and generalized the theorems of Stein-Tomas and Strichartz related to the  surfaces restrictions of Fourier transforms to systems of orthonormal functions.  The duality principle  gives an advantage   which   allows to deduce   the restriction bounds for orthonormal systems from Schatten bounds for a single function. 
	Thus the main focus is about how to get the Schatten bounds.   
	
	Stein \cite{stein} and Strichartz \cite{st} proved the boundedness of $\mathcal{T}_{S}$ by introducing an analytic family of operators $\left\{T_{z}\right\}$ in the sense of Stein defined in a strip $a \leq Re(z) \leq b$ in the complex plane such that $\mathcal{T}_{S}=T_{c}$ for some $c \in[a, b]$.  Further,  they proved that $T_{z}$ is $L^{2}-L^{2}$ bounded on the line $\operatorname{Re}(z)=b$ and $L^{1}-L^{\infty}$ bounded of on the line $\operatorname{Re}(z)=a$.   Using Stein's complex interpolation theorem \cite{stein}, they deduced the $L^{p}-L^{p^{\prime}}$ boundedness of $\mathcal{T}_{S}$ for some $p \in[1,2]$.  Note that H\"older's inequality implies that the operator  $\mathcal{T}_{S}$ is $L^{p}-L^{p^{\prime}}$ bounded if and only if the operator   $W_1\mathcal{T}_{S}W_2$ (composition of the multiplication operator associated with $W_1, \mathcal{T}_{S} $ and the multiplication operator associated with $W_2$) is  $L^{2}-L^{2}$ bounded.     Indeed, Frank and Sabin \cite{FS} proved a  more stronger Schatten bound for $W_{1} \mathcal{T}_{S} W_{2}$ which is a more general result than $L^{2}-L^{2}$ boundedness.

	Recently,   a considerable attention has been devoted  to extend   Strichartz inequality for  a system of orthonormal functions in different framework  by several researchers.  For example,  the restriction theorems  and   the Strichartz inequality for  the system of orthonormal functions      for  Hermite operator   and  special Hermite  can be found in  \cite{shyam, shyam1,  lee}. The second author with Feng   \cite{manli} established  the restriction theorem and proved orthonormal Strichartz estimate with respect to the Laguerre operator.   Moreover,  Nakamura in \cite{nakamura}  proved  the sharp orthonormal Strichartz inequality on the torus that  generalizes   Strichartz inequality on torus.   We also refer to   \cite{BEZ} for the recent developments in  the direction of   Strichartz estimates for orthonormal families of initial data and weighted oscillatory integral.  
	
	Motivated by the  recent  works and developments in the direction of orthnomal Strichartz inequality, in this paper we    establish a restriction theorem  for    Fourier-$\Delta_{k,a}$ transform and  as an application of the   restriction problem, we prove   Strichartz    inequality		for  a system of orthonormal functions   for the Schr\"odinger propagator  $e^{-i t \Delta_{k, a}} $ associated with $(k,a)$-generalized Laguerre operator $ \Delta_{k, a}.$   	Finally, 	 we aim to prove   Strichartz    inequality		for  orthonormal systems  of initial data   associated with  Dunkl operator $ \Delta_k $ on $\mathbb{R}^n$.  Here we note that,  the  results  in this paper are more general and  Strichartz    inequality		for  a system of orthonormal functions associated with   Laplacian, Hermite, and  Laguerre operator on $\mathbb{R}^n$ will be a particular case of our results.   The main results of this papers are as follows, in particular,  we consider the restriction estimate of the form:  
	\begin{theorem}[Restriction estimates for orthonormal functions-general case]	Suppose $a=1,2$ and $k$ is a non-negative multiplicity function such that
		$
		a+2 \gamma+n-2>0.
		$ Let $n\geq 1$ and $S=\{(\nu, \ell, m , j)\in \mathbb{Z}\times \mathcal{A}: \nu=2\ell+\frac{2m}{a}\}$. If $p, q, n \geqslant$ 1 such that
		$$
		1 \leqslant p<\frac{4\gamma+2n+3a-4}{  4\gamma+2n+a-4} \quad \text { and } \quad 	\frac{1}{q}+\frac{2 \gamma+n+a-2}{pa}=\frac{2 \gamma+n+a-2}{a},
		$$
		for any (possible infinity) orthonormal system $\left\{\{\hat{F_\iota}(\nu, \ell, m , j)\}_{(\nu, \ell, m , j)\in \mathbb{Z}\times \mathcal{A}}\right\}_{\iota \in{I}}$  in $\ell^{2}(S)$ and any sequence $\left\{n_{\iota }\right\}_{\iota \in I}$ in $\mathbb{C}$, we have
		\begin{align}\label{p-q range}
			\left \| \sum_{\iota \in I} n_{\iota } \left|  \mathcal{E}_{S} \left\{\hat{F}_{\iota }(\nu, \ell, m , j)\right\} \right|^{2} \right\|_{L^q((-\frac{\pi}{2},\frac{\pi}{2}), L_{k,a}^p(\mathbb{R}^n))} \leqslant C\left(\sum_{\iota }\left|n_{\iota }\right|^{\frac{2 p}{p+1}}\right)^{\frac{p+1}{2 p}},
		\end{align}
		where $C>0$ is independent of  $\left\{\{\hat{F_\iota}(\nu, \ell, m , j)\}_{(\nu, \ell, m , j)\in \mathbb{Z}\times \mathcal{A}}\right\}_{\iota \in{I}}$  and  $\left\{n_{\iota}\right\}_{\iota  \in J}$.
		
	\end{theorem}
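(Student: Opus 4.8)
The plan is to deduce \eqref{p-q range} from a single Schatten-norm bound by way of the duality principle of Frank and Sabin \cite{FS}, and then to prove that Schatten bound by Stein's analytic interpolation \cite{stein} applied to $\mathcal{T}_S=\mathcal{E}_S\mathcal{E}_S^{*}$, exactly in the spirit of the scalar Stein--Strichartz argument recalled above. First I would recast the left-hand side operator-theoretically. Setting $\gamma=\sum_{\iota\in I}n_\iota\,|\hat F_\iota\rangle\langle \hat F_\iota|$, the orthonormality of the family gives $\|\gamma\|_{\mathfrak{S}^{2p/(p+1)}}=\big(\sum_\iota|n_\iota|^{2p/(p+1)}\big)^{(p+1)/2p}$, while $\sum_\iota n_\iota|\mathcal{E}_S\{\hat F_\iota\}|^2$ is precisely the spatial density $\rho[\mathcal{E}_S\gamma\mathcal{E}_S^{*}]$ (the diagonal of the kernel) of the operator $\mathcal{E}_S\gamma\mathcal{E}_S^{*}$. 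Thus \eqref{p-q range} is the density bound $\|\rho[\mathcal{E}_S\gamma\mathcal{E}_S^{*}]\|_{L^q(\T',L^p_{k,a})}\le C\|\gamma\|_{\mathfrak{S}^{2p/(p+1)}}$, where $\T'=(-\tfrac\pi2,\tfrac\pi2)$.

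Testing against a nonnegative potential $W$ and using the trace identity $\int \rho[\mathcal{E}_S\gamma\mathcal{E}_S^{*}]\,W=\mathrm{Tr}(\gamma\,\mathcal{E}_S^{*}W\mathcal{E}_S)$ together with Schatten--von Neumann duality, and noting that $W^{1/2}\mathcal{T}_SW^{1/2}$ and $\mathcal{E}_S^{*}W\mathcal{E}_S$ share the same nonzero singular values while $\tfrac{p+1}{2p}+\tfrac{p-1}{2p}=1$, the density bound is equivalent to the bilinear Schatten estimate
\begin{equation*}
\big\|W_1\,\mathcal{T}_S\,W_2\big\|_{\mathfrak{S}^{2p/(p-1)}}\le C\,\|W_1\|_{L^{2q'}(\T',L^{2p'}_{k,a})}\,\|W_2\|_{L^{2q'}(\T',L^{2p'}_{k,a})},
\end{equation*}
the positive case corresponding to $W_1=W_2=W^{1/2}$. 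It therefore suffices to establish this Schatten bound. Next I would identify the kernel of $\mathcal{T}_S$: on $S=\{\nu=2\ell+2m/a\}$ the extension operator \eqref{extension} coincides, up to a fixed phase, with the Schr\"odinger propagator $e^{-it\Delta_{k,a}}$ applied to the datum $\sum\hat F\,\Phi^{(a)}_{\ell,m,j}$, so $\mathcal{T}_S$ is the space--time operator with kernel $K_{t-s}(x,y)$, where $K_\tau$ is the kernel of $e^{-i\tau\Delta_{k,a}}$. For the two admissible exponents $a=1,2$ this kernel is available in closed Mehler/Laguerre form, from which I would extract the two ingredients that feed the interpolation: the unitarity $\|e^{-i\tau\Delta_{k,a}}f\|_{L^2_{k,a}}=\|f\|_{L^2_{k,a}}$ from the Plancherel formula, and the dispersive bound $|K_\tau(x,y)|\le C\,|\sin(c\tau)|^{-(2\gamma+n+a-2)/a}$ uniform in $x,y$.

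I would then run Stein's complex interpolation for the Schatten-valued analytic family $T_z=W_1\widetilde{\mathcal{T}}_zW_2$ obtained by inserting a suitable analytic factor (a power of $\sin(c\tau)$) so that the family returns $W_1\mathcal{T}_SW_2$ on an interior line, is $\mathfrak{S}^\infty$-bounded on one boundary line by $\|W_1\|_\infty\|W_2\|_\infty$ (from the $L^2$-unitarity, i.e.\ the trivial operator-norm bound), and is $\mathfrak{S}^2$-bounded on the other boundary line by $\|W_1\|_{L^2}\|W_2\|_{L^2}$ (because, after the regularizing power, the weighted kernel is square-integrable by the dispersive bound). At the parameter returning $W_1\mathcal{T}_SW_2$ this produces the required $\mathfrak{S}^{2p/(p-1)}$ bound, and matching the interpolation exponents reproduces both the Schatten index $\tfrac{2p}{p-1}$ (with $\theta=(p-1)/p$) and, by scaling, the admissibility identity $\tfrac1q+\tfrac{2\gamma+n+a-2}{pa}=\tfrac{2\gamma+n+a-2}{a}$. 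The strict bound $p<\frac{4\gamma+2n+3a-4}{4\gamma+2n+a-4}$ is exactly the range in which the regularized kernel remains in $L^2$, equivalently in which the time integral of $|\sin(c\tau)|^{-2(2\gamma+n+a-2)/a}$ (tempered by the analytic factor) converges; at the endpoint the Hilbert--Schmidt estimate degenerates, which forces the interval to be open.

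The main obstacle is the sharp dispersive kernel estimate. One must control the explicit Mehler kernel (for $a=2$) and Laguerre kernel (for $a=1$) uniformly in the Dunkl variables, which requires pointwise bounds on the Dunkl kernel, equivalently on the intertwining operator, and a careful analysis of the singularity of $K_\tau$ as $\tau\to 0$ and at the periodic times; confirming that the resulting integrability threshold is precisely the stated exponent $\frac{4\gamma+2n+3a-4}{4\gamma+2n+a-4}$ is where the genuine work lies, the duality reduction and the interpolation then being essentially formal.
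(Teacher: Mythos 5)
Your overall strategy---reducing \eqref{p-q range} via the Frank--Sabin duality principle to a bilinear Schatten bound for $W_1\mathcal{T}_SW_2$, identifying the kernel of $\mathcal{T}_S$ with the propagator kernel $\Lambda_{k,a}(x,y;i(t-\tau))$ up to a phase, and running Stein's analytic interpolation between a unitarity line and a Hilbert--Schmidt line fed by the dispersive bound $|\Lambda_{k,a}(x,y;i\tau)|\lesssim|\sin\tau|^{-(2\gamma+n+a-2)/a}$---is the paper's strategy, and your duality bookkeeping (the Schatten index $\tfrac{2p}{p-1}$ and the weight spaces $L^{2q'}_tL^{2p'}_{k,a}$) is correct. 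The gap is in how you propose to obtain the \emph{full scaling line} of exponents. Interpolating the two endpoints you name, $\mathcal{G}^{\infty}$ with $\|W_1\|_{\infty}\|W_2\|_{\infty}$ and $\mathcal{G}^{2}$ with $\|W_1\|_{L^2}\|W_2\|_{L^2}$, produces at parameter $\theta$ only weights in $L^{2/\theta}_tL^{2/\theta}_{k,a}$, i.e.\ equal time and space exponents; after dualization this is the single diagonal point $p=q=1+\frac{a}{2\gamma+n+a-2}$ of Theorem \ref{diagonal}, not the whole line $\frac1q+\frac{2\gamma+n+a-2}{pa}=\frac{2\gamma+n+a-2}{a}$, on which $q\neq p$ except at that point. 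Relatedly, your $\mathcal{G}^{2}$ endpoint with $L^2$ weights is only available when the regularized time kernel $|\sin(t-\tau)|^{-\sigma}$ satisfies $2\sigma<1$ (Schur/Young); to move along the scaling line one must let the time exponents of the weights vary, which requires a Hardy--Littlewood--Sobolev inequality in the time variable rather than square-integrability of the kernel.

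The paper supplies exactly the two missing ingredients. First, it places complex powers on the weights as well ($W_i^{\lambda-is}$) and proves a one-parameter family of Hilbert--Schmidt bounds on the lines $\operatorname{Re}z=-\lambda$ with $\lambda_0-\tfrac12<\lambda\le\lambda_0$, using a periodized Hardy--Littlewood--Sobolev inequality for $|\sin(t-\tau)|^{-(2\lambda_0-2\lambda)}$ on $(-\frac{\pi}{2},\frac{\pi}{2})^2$ (Lemma \ref{r-HLS}); interpolating each of these with the $L^2\to L^2$ line gives Proposition \ref{general-schatten-bound}, which after dualization yields the upper end of the theorem's range of $p$, including the sharp threshold $\frac{4\gamma+2n+3a-4}{4\gamma+2n+a-4}$ you correctly identify as an integrability threshold. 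Second, the remaining part of the scaling line (down to $p=1$) does not come from the dispersive estimate at all: it comes from interpolating the diagonal Schatten bound of Lemma \ref{as} with the trivial estimate $\big\|\sum_\iota n_\iota|e^{-it\Delta_{k,a}}f_\iota|^2\big\|_{L^\infty_tL^1_{k,a}}\le\sum_\iota|n_\iota|$, whose dual form is a $\mathcal{G}^{\infty}$ bound with weights in $L^2_tL^\infty_{k,a}$ (note: not $L^\infty_{t,x}$, and this asymmetry is what lets the interpolation leave the diagonal). Your outline needs both of these steps to reach the stated range; as written it establishes only the diagonal case.
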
 
	As a consequences of the  restriction problem, we obtain the following Strichartz inequality for the system of orthonormal functions.
	\begin{theorem} [Strichartz inequality for orthonormal functions-general case] If $p, q, n \geqslant$ 1 such that
		$$
		1 \leqslant p<\frac{4\gamma+2n+3a-4}{  4\gamma+2n+a-4} \quad \text { and } \quad 	\frac{1}{q}+\frac{2 \gamma+n+a-2}{pa}=\frac{2 \gamma+n+a-2}{a}.
		$$
		Then for any (possible infinity) system $\left\{f_{\iota}\right\}_{\iota \in I }$ of orthonormal functions in $L_{k,a}^{2}\left(\mathbb{R}^n\right)$ and any coefficients $\left\{n_{\iota}\right\}_{\iota \in I }$ in $\mathbb{C}$, we have
		$$
		\left\|\sum_{  \iota \in I} n_{\iota }\left|e^{-i t \Delta_{k, a}}f_{\iota }\right|^{2}\right\|_{L^q ((-\frac{\pi}{2},\frac{\pi}{2}), L_{k,a}^p(\mathbb{R}^n))} \leqslant C\left(\sum_{\iota \in I}\left|n_{\iota }\right|^{\frac{2 p}{p+1}}\right)^{\frac{p+1}{2 p}},
		$$
		where $C>0$ is independent of $\left\{f_{\iota}\right\}_{\iota\in I}$ and $\left\{n_{\iota}\right\}_{\iota \in I}$.
	\end{theorem}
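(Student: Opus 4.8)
The plan is to recognize that the Schr\"odinger propagator $e^{-it\Delta_{k,a}}$ is nothing but the extension operator $\mathcal{E}_S$ attached to the spectral surface $S$, so that the desired inequality becomes a direct transcription of the preceding restriction theorem. First I would recall from Subsection \ref{dunklop} that each $\Phi_{\ell,m,j}^{(a)}$ is an eigenfunction of $\Delta_{k,a}$ with eigenvalue $2\ell+\frac{2m}{a}$; this is precisely why the surface $S=\{(\nu,\ell,m,j)\in\mathbb{Z}\times\mathcal{A}:\nu=2\ell+\frac{2m}{a}\}$ is the relevant one. For $a=1,2$ these eigenvalues are integers, so $S\subset\mathbb{Z}\times\mathcal{A}$ is a genuine discrete surface and the Fourier-$\Delta_{k,a}$ framework on $\mathbb{T}\times\mathbb{R}^n$ introduced above applies.

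Next, given $f\in L^2_{k,a}(\mathbb{R}^n)$ with expansion $f=\sum_{(\ell,m,j)\in\mathcal{A}}\hat{f}(\ell,m,j)\Phi_{\ell,m,j}^{(a)}$, applying the propagator term by term gives
$$
e^{-it\Delta_{k,a}}f(x)=\sum_{(\ell,m,j)\in\mathcal{A}}\hat{f}(\ell,m,j)\,e^{-it(2\ell+2m/a)}\Phi_{\ell,m,j}^{(a)}(x).
$$
I would then define the lifted sequence supported on $S$ by $\hat{F}(\nu,\ell,m,j):=\hat{f}(\ell,m,j)$ when $\nu=2\ell+\frac{2m}{a}$ and zero otherwise. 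Comparing with the definition \eqref{extension} of the extension operator yields the pointwise identity $e^{-it\Delta_{k,a}}f=\mathcal{E}_S(\{\hat{F}(\nu,\ell,m,j)\})$, valid for $(t,x)\in\mathbb{T}\times\mathbb{R}^n$.

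To transfer the hypotheses, I would use the polarized Plancherel formula to get $\langle f_\iota,f_{\iota'}\rangle_{L^2_{k,a}}=\langle\{\hat{F}_\iota\},\{\hat{F}_{\iota'}\}\rangle_{\ell^2(S)}$, so the system $\{\{\hat{F}_\iota\}\}_{\iota\in I}$ is orthonormal in $\ell^2(S)$ exactly when $\{f_\iota\}_{\iota\in I}$ is orthonormal in $L^2_{k,a}(\mathbb{R}^n)$. Substituting $\mathcal{E}_S\{\hat{F}_\iota\}=e^{-it\Delta_{k,a}}f_\iota$ into \eqref{p-q range} and invoking the preceding restriction theorem (whose standing hypotheses $a=1,2$, $k$ non-negative, and $a+2\gamma+n-2>0$ are in force, and whose admissible range of $p,q$ coincides with the one assumed here) then produces the claimed Strichartz inequality over $L^q((-\tfrac{\pi}{2},\tfrac{\pi}{2}),L^p_{k,a}(\mathbb{R}^n))$, with the same constant $C$ and for possibly infinite systems.

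The only step requiring genuine care—and the one I would write out in full—is the identification $e^{-it\Delta_{k,a}}=\mathcal{E}_S$ on the spectral surface. The hard part is confirming that the spectral eigenvalues of $\Delta_{k,a}$ are exactly $2\ell+\frac{2m}{a}$ and that they are integers precisely for $a=1,2$, so that the time-Fourier frequencies $\nu\in\mathbb{Z}$ appearing in $\mathcal{E}_S$ coincide with the spectral data; for generic $a$ the eigenvalues need not be integers and the discrete-surface formulation breaks down. Once this matching is established, together with the $\pi$-periodicity in $t$ noted in the introduction (which singles out $(-\tfrac{\pi}{2},\tfrac{\pi}{2})$ as the natural time window), there is no further analytic content and the theorem follows immediately from the restriction estimate.
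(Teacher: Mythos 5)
Your approach is the same as the paper's: the theorem is obtained as an immediate corollary of the general-case restriction estimate by identifying $e^{-it\Delta_{k,a}}f$ with $\mathcal{E}_S\{\hat F\}$ on the surface $S=\{\nu=2\ell+\frac{2m}{a}\}$ and transferring orthonormality via Plancherel, exactly as in \eqref{eq10}--\eqref{eq11}. One correction, though: the eigenvalue of $\Delta_{k,a}$ on $\Phi_{\ell,m,j}^{(a)}$ is \emph{not} $2\ell+\frac{2m}{a}$ but $2\ell+\lambda_{k,a,m}+1=2\ell+\frac{2m}{a}+1+\frac{2\gamma+n-2}{a}$, so the "hard part" you single out --- confirming that the eigenvalues are exactly $2\ell+\frac{2m}{a}$ --- would fail as stated. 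The correct identity, recorded in \eqref{eq11}, is
\begin{equation*}
\mathcal{E}_S(\{\hat F(\nu,\ell,m,j)\})(t,x)=e^{it\left(1+\frac{1}{a}(2\gamma+n-2)\right)}\,e^{-it\Delta_{k,a}}f(x),
\end{equation*}
i.e.\ the two agree only up to a unimodular, $t$-dependent phase coming from the constant shift $1+\frac{2\gamma+n-2}{a}$ in the spectrum. This is harmless for the theorem because only $\left|e^{-it\Delta_{k,a}}f_\iota\right|^2$ enters the left-hand side, but your write-up should replace the claimed exact identification by this phase-corrected one; note also that what must be an integer for the discrete-surface formulation is $2\ell+\frac{2m}{a}$ (true for $a=1,2$), not the eigenvalue itself.
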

	Based on the above result and the kernel relation  between the semigroups $e^{-i t \Delta_{k, a}}$ and $e^{i \frac{t}{a}\|x\|^{2-a} \Delta_{k}}$, we     obtain the following Strichartz inequality associated with the Dunkl operator  for the system of orthonormal functions.
	\begin{theorem} [Strichartz inequality for orthonormal functions associated with Dunkl operator-general case] If $p, q, n \geqslant$ 1 such that
		$$
		1 \leqslant p<\frac{4\gamma+2n+3a-4}{  4\gamma+2n+a-4} \quad \text { and } \quad 	\frac{1}{q}+\frac{2 \gamma+n+a-2}{pa}=\frac{2 \gamma+n+a-2}{a},
		$$
		then for any (possible infinity) system $\left\{f_{\iota}\right\}_{\iota \in I }$ of orthonormal functions in $L_{k,a}^{2}\left(\mathbb{R}^{n}\right)$ and any coefficients $\left\{n_{\iota}\right\}_{\iota \in I }$ in $\mathbb{C}$, we have
		$$
		\left\|\sum_{\iota\in I} n_{i}\left|e^{i\frac{ t}{a} \|x\|^{2-a}\Delta_k} f_{\iota}\right|^{2} \right\|_{L^q (\mathbb{R}, L_{k,a}^p(\mathbb{R}^n))} \leqslant C\left(\sum_{\iota \in I}\left|n_{\iota }\right|^{\frac{2 p}{p+1}}\right)^{\frac{p+1}{2 p}},
		$$
		where $C>0$ is independent of $\left\{f_{\iota}\right\}_{\iota\in I}$ and $\left\{n_{\iota}\right\}_{\iota \in I}$.
	\end{theorem}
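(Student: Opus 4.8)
The plan is to deduce this theorem directly from the orthonormal Strichartz inequality for $e^{-it\Delta_{k,a}}$ proved above (the preceding theorem), by transferring that bound from the harmonic propagator to the free propagator $e^{i\frac{t}{a}\|x\|^{2-a}\Delta_k}$ through the lens (Mehler-type) transform linking the two semigroups. Concretely, I would first record the pointwise kernel identity: there is a time reparametrization $s=\tan t$ carrying the period interval $(-\frac{\pi}{2},\frac{\pi}{2})$ bijectively onto $\mathbb{R}$, together with an explicit amplitude $c(t)$, a real phase $\varphi(t,x)$, and a dilation $x\mapsto\lambda(t)x$, such that
$$e^{i\frac{s}{a}\|x\|^{2-a}\Delta_k}f(x)=c(t)\,e^{i\varphi(t,x)}\big(e^{-it\Delta_{k,a}}g\big)(\lambda(t)x),\qquad g(x)=e^{i\psi(x)}f(x),$$
with $\psi$ a real-valued phase independent of the chosen initial datum. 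The crucial structural observation is that, since $g_\iota:=e^{i\psi}f_\iota$ differs from $f_\iota$ only by multiplication by a fixed unimodular factor, the system $\{g_\iota\}_{\iota\in I}$ is again orthonormal in $L^2_{k,a}(\mathbb{R}^n)$, so the preceding theorem applies verbatim to $\{g_\iota\}$.

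Next I would take absolute squares and form the weighted sum. Because the phase $e^{i\varphi(t,x)}$ drops out under $|\cdot|^2$, the identity yields
$$\sum_{\iota\in I}n_\iota\big|e^{i\frac{s}{a}\|x\|^{2-a}\Delta_k}f_\iota\big|^2=|c(t)|^2\Big(\sum_{\iota\in I}n_\iota\big|e^{-it\Delta_{k,a}}g_\iota\big|^2\Big)(\lambda(t)x).$$
I would then compute the $L^q(\mathbb{R},L^p_{k,a}(\mathbb{R}^n))$ norm of the left-hand side and change variables in both space and time: the spatial substitution $y=\lambda(t)x$ produces a Jacobian in the weight $v_{k,a}$ governed by its homogeneity degree $Q=2\gamma+n+a-2$, while the temporal substitution $ds=\sec^2 t\,dt$ rewrites the integral over $\mathbb{R}$ as an integral over $(-\frac{\pi}{2},\frac{\pi}{2})$. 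The exponents of the resulting powers of $|c(t)|$, $\lambda(t)$ and $\sec^2 t$ are precisely balanced by the scaling relation $\frac{1}{q}+\frac{2\gamma+n+a-2}{pa}=\frac{2\gamma+n+a-2}{a}$ imposed in the hypotheses, so that all $t$-dependent factors cancel and the norm reduces, up to a harmless constant, to $\big\|\sum_\iota n_\iota|e^{-it\Delta_{k,a}}g_\iota|^2\big\|_{L^q((-\frac{\pi}{2},\frac{\pi}{2}),L^p_{k,a}(\mathbb{R}^n))}$.

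Finally, applying the preceding theorem to the orthonormal family $\{g_\iota\}$ bounds this last quantity by $C\big(\sum_\iota|n_\iota|^{2p/(p+1)}\big)^{(p+1)/(2p)}$, which is exactly the desired estimate. The main obstacle I anticipate is the bookkeeping in the change-of-variables step: one must verify that the powers of the dilation factor coming from the weight $v_{k,a}$ of homogeneity $Q$, from the amplitude $c(t)$, and from the time Jacobian combine to be genuinely independent of $t$. This is a purely computational check, but it is the heart of the argument, and it is precisely the step where the exact form of the scaling condition on $(p,q)$ is used — indeed, that condition is dictated by requiring this cancellation. The validity range $p<\frac{4\gamma+2n+3a-4}{4\gamma+2n+a-4}$ is inherited unchanged, since the lens transform is an isometric-type correspondence that does not alter the admissible exponents.
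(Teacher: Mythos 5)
Your proposal is correct and follows essentially the same route as the paper: the paper likewise invokes the kernel/propagator identity $e^{-i\arctan(s)\,\Delta_{k,a}}f(x)=(1+s^2)^{\frac{2\gamma+n+a-2}{2a}}e^{-is\frac{\|x\|^a}{a}}\bigl(e^{i\frac{s}{a}\|\cdot\|^{2-a}\Delta_k}f\bigr)\bigl((1+s^2)^{\frac{1}{a}}x\bigr)$, checks via the change of variables $s=\tan t$ and the scaling relation on $(p,q)$ that the mixed norms over $\mathbb{R}$ and over $(-\frac{\pi}{2},\frac{\pi}{2})$ coincide for the weighted sums of squares, and then applies the general-case orthonormal Strichartz theorem for $e^{-it\Delta_{k,a}}$. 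The only cosmetic difference is that in the actual identity the phase appears purely as an output multiplier (your input modulation $\psi$ is in fact zero), which only simplifies your orthonormality observation.
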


	Apart from the introduction, this paper is organized as follows. 
	\begin{itemize}
		\item In Section \ref{sec2},  we recall some basic definitions and important properties of the Dunkl operator and generalized Laguerre semigroup. 
		\item  In Section \ref{s3}, we explain the relation bewtween the Strichartz estimate for the Schr\"odinger-$\Delta_{k,a}$ equation and the restriction theorem associated with $\Delta_{k,a}$ on certain discrete surface.
		\item  In Section \ref{s4}, we introduce the complex interpolation method and the duality principle in terms of Schatten class and  establish the Schatten boundedness for the Fourier-$\Delta_{k,a}$ extension operator for some $\lambda_0>1$.
		\item  In Section \ref{s5}, we prove   restriction estimates and Strichartz inequalities for systems of orthonormal functions asssociated with the $(k,a)$-generalized Laguerre operator $\Delta_{k,a}$. 
		\item   In Section \ref{s6}, we establish the orthonormal Strichartz inequality for the Dunkl operator $\Delta_k$ by the relation of the kernel between $e^{-it\Delta_{k,a}}$ and $e^{i\frac{a}{t}\|x\|^{2-a}\Delta_k}$.
		\item  In  Section  \ref{s7}, we list some applications to show the results in our paper are more general.
	\end{itemize}

	%
	%
	%
	%
	%
	%
	%
	%
	%
	%
	%

	\section{Preliminary}\label{sec2}
	In this section, we recall some basic definitions and important properties of the Dunkl operator and generalized Laguerre semigroup to make the paper self contained. A complete account of detailed  study  on Dunkl operator  can be found in \cite{ros,ben}. However, we mainly adopt the notation and terminology given in \cite{ben}. The basic ingredient in the theory of Dunkl operators are root systems and finite reflection groups. We start this section by the definition of root system.
	\subsection{Dunkl operator}\label{dunklop} Let $\langle\cdot, \cdot\rangle$  denotes the standard Euclidean scalar product in $\mathbb{R}^{n}$.  For $x \in \mathbb{R}^{n}$, we denote   $\|x\|$ as $\|x\|=\langle x, x\rangle^{1 / 2}$.
	For $\alpha \in \mathbb{R}^{n} \backslash\{0\},$ we denote $r_{\alpha}$ as  the reflection with respect to the hyperplane $\langle \alpha\rangle^{\perp}$ orthogonal to $\alpha$ and is defined by
	$$
	r_{\alpha}(x):=x-2 \frac{\langle \alpha, x\rangle}{\|\alpha\|^{2}} \alpha, \quad x \in \mathbb{R}^{n}.
	$$
	A finite set $\mathcal{R}$ in $\mathbb{R}^{n} \backslash\{0\}$ is  said to be a  root system if the following holds:
	\begin{enumerate}
		\item $ r_{\alpha}(\mathcal{R})=\mathcal{R}$ for all $\alpha \in \mathcal{R}$,
		\item $\mathcal{R} \cap \mathbb{R} a=\{\pm \alpha\}$ for all $\alpha \in \mathcal{R}$.
	\end{enumerate}
	For a given root system $\mathcal{R}$, the subgroup $G \subset O(n, \mathbb{R})$ generated by the reflections $\left\{r_{\alpha} \mid \alpha \in \mathcal{R}\right\}$ is called the finite Coxeter group associated with $\mathcal{R}$.  The dimension of $span \mathcal{R}$ is called the rank of $\mathcal{R}$.   For a   detailed on the theory of finite reflection groups, we refer to  \cite{hum}.  	Let    $\mathcal{R}^+:=\{\alpha\in\mathcal{R}:\langle\alpha,\beta\rangle>0\}$ for some $\beta\in\mathbb{R}^n\backslash\bigcup_{\alpha\in\mathbb{R}}\langle \alpha\rangle^{\perp}$,  be a  fix  positive root system.

	Some  typical examples of such system is the Weyl groups such as the symmetric group $S_{n}$ for the type $A_{n-1}$ root system and the hyperoctahedral group for the type $B_{n}$ root system. In addition, $H_{3}, H_{4}$ (icosahedral groups) and, $I_{2}(n)$ (symmetry group of the regular $n$-gon) are also the Coxeter groups. 

	$A$ multiplicity function for $G$ is a function $k: \mathcal{R} \rightarrow \mathbb{C}$ which is constant on $G$-orbits. Setting $k_{\alpha}:=k(\alpha)$ for $\alpha \in \mathcal{R},$  from the definition of $G$-invariant,  we have $k_{g \alpha}=k_{\alpha}$ for all $g \in G$.  We say $k$ is non-negative if $k_{\alpha} \geq 0$ for all $\alpha \in \mathcal{R}$. Let us denote  $\gamma$ as $\gamma=\gamma(k):=\sum\limits_{\alpha\in\mathcal{R}^+}k(\alpha)$.
	The $\mathbb{C}$-vector space of non-negative multiplicity functions on $\mathcal{R}$ is denoted by $\mathcal{K}^{+}$. For $\xi \in \mathbb{C}^{n}$ and $k \in \mathcal{K}^{+},$ Dunkl  in 1989 introduced a family of first order differential-difference operators $T_{\xi}:= T_{\xi}(k)$  by
	\begin{align}\label{dunkl}
		T_{\xi}(k) f(x):=\partial_{\xi} f(x)+\sum_{\alpha \in \mathcal{R}^{+}} k_{\alpha }\langle \alpha, \xi\rangle \frac{f(x)-f\left(r_{\alpha} x\right)}{\langle \alpha, x\rangle}, \quad f \in C^{1}\left(\mathbb{R}^{n}\right),
	\end{align}
	where $\partial_{\xi}$ denotes the directional derivative corresponding to $\xi$.  The operator $T_\xi$   defined in (\ref{dunkl}) formally known as Dunkl operator and  is one of the most important developments in the theory of special functions associated with root systems \cite{dun}.

	They commute pairwise and are skew-symmetric with respect to the $G$-invariant measure $h_k(x)dx$, where the weight function $h_k(x):=\prod\limits_{\alpha\in \mathcal{R}^{+}}|\langle \alpha, x\rangle|^{2 k_{\alpha}}$ and is homogeneous of degree $2\gamma$. Thanks to the $G$-invariance of the multiplicity function, this definition is independent of the choice of the positive subsystem $\mathcal{R}^{+}$.  In \cite{dun1991},	it is shown    that for any $k\in\mathcal{K}^+$, there is a unique linear isomorphism $V_k$ (Dunkl's intertwining operator) on the space $\mathcal{P}(\mathbb{R}^n)$ of polynomials on $\mathbb{R}^n$ such that
	\begin{enumerate}
		\item $V_k\left(\mathcal{P}_m(\mathbb{R}^n)\right)=\mathcal{P}_m(\mathbb{R}^n)$ for all $m\in\mathbb{N}$,
		\item $V_k|_{\mathcal{P}_0(\mathbb{R}^n)}=id$,
		\item $T_\xi(k)V_k=V_k\partial_\xi$.
	\end{enumerate}
	Here $\mathcal{P}_m(\mathbb{R}^n)$ denotes the space of homogeneous polynomials of degree $m$. 
	
	For any finite reflection group $G$ and   any $k\in\mathcal{K}^+$, R\"{o}sler in \cite{ros} proved that there exists a unique positive Radon probability measure $\rho_x^k$ on $\mathbb{R}^n$ such that
	\begin{equation}\label{239}
		V_kf(x)=\int_{\mathbb{R}^n}f(\xi)d\rho_x^k(\xi).
	\end{equation}
	The measure $\rho_x^k$ depends on $x\in\mathbb{R}^n$ and its support is contained in the ball $B\left(\|x\|\right):=\{\xi\in\mathbb{R}^n: \|\xi\|\leq\|x\|\}$. In view of the Laplace type representation \eqref{239}, Dunkl's intertwining operator $V_k$ can be extended to a larger class of function spaces.
	
	Let $\{\xi_1, \xi_2, \cdots, \xi_n\}$ be an orthonormal basis of $(\mathbb{R}^n , \langle \cdot, \cdot\rangle )$. Then  the Dunkl Laplacian operator $\Delta_k $ is defined as $$\Delta_k=\sum_{j=1}^nT_{\xi_j}(k)^2.$$
	The definition of  $\Delta_k $ is independent of the choice of an orthonormal basis of $\mathbb{R}^n.$ In fact,   one can see that the operator $\Delta_k $ also can be expressed as the following: $$\Delta_kf(x)=\Delta f(x)+\sum_{\alpha \in \mathcal{R}^+}k_\alpha \left\{\frac{2 \langle \nabla f(x),  \alpha \rangle}{\langle \alpha, x\rangle } -\|\alpha\|^2 \frac{f(x)-f(r_\alpha x)}{\langle \alpha, x\rangle^2 } \right\}, \quad f\in C^1(\mathbb{R}^n),$$
	where $\nabla$ and $\Delta$ are      the usual gradiant  and usual Laplacian operator on $\mathbb{R}^n$, respectively.  Note that for $k \equiv 0$, the Dunkl Laplacian operator $\Delta_k$ reduces to the Euclidean Laplacian $\Delta$. 
	\subsection{An orthonormal basis in $L_{k, a}^{2}(\mathbb{R}^n)$}	
	A $k$-harmonic polynomial of degree $m$ is a homogeneous polynomial $p$ on $\mathbb{R}^{n}$ of degree $m$ such that $\Delta_{k} p=0.$ Denote by $\mathcal{H}_{k}^{m}\left(\mathbb{R}^{n}\right)$ the space of $k$-harmonic polynomials of degree $m$. Spherical harmonics (or just $h$-harmonics) of degree $m$ are then defined as the restrictions of $\mathcal{H}_{k}^{m}\left(\mathbb{R}^{n}\right)$ to the unit  sphere $\mathbb{S}^{n-1}$. The spaces $\mathcal{H}_{k}^{m}\left(\mathbb{R}^{n}\right)|_{\mathbb{S}^{n-1}}$ $(m\in\mathbb{N})$ are finite dimensional and orthogonal to each other with respect to the measure $h_k(\omega)d\sigma(\omega)$, where $d \sigma$ be the standard measure on the unit sphere $\mathbb{S}^{n-1}$ and $d_{k}$ the normalizing constant defined by
	\begin{align}\label{110}
		d_{k}:=\left(\int_{S^{n-1}}h_k(\omega)d \sigma(\omega)\right)^{-1}.
	\end{align}
	For $k \equiv 0$, $d_{k}^{-1}$ is the volume of the unit sphere, namely
	$$
	d_{0}=\frac{\Gamma\left(\frac{n}{2}\right)}{2 \pi^{\frac{n}{2}}}.
	$$
	We also have the following   spherical harmonics decomposition 
	\begin{equation}\label{242}
		L^2(\mathbb{S}^{n-1},h_k(\omega)d\sigma(\omega))=\sum_{m\in\mathbb{N}}^\oplus\mathcal{H}_{k}^{m}\left(\mathbb{R}^{n}\right)|_{\mathbb{S}^{n-1}}.
	\end{equation}
	For $a>0$ and $1\leq p<\infty$, let $L_{k, a}^{p}\left(\mathbb{R}^{n}\right)$ be the space of $L^p$-functions on $\mathbb{R}^{n}$ with respect to the weight
	$$
	v_{k, a}(x):=\|x\|^{a-2} h_k(x),
	$$
	which has a degree of homogeneity $a-2+2\gamma$.
	
	Moreover, for the polar coordinates $x=r\omega$ $(r>0,\omega\in\mathbb{S}^{n-1})$, we have
	$$
	v_{k, a}(x)dx=r^{2\gamma+n+a-3}h_k(\omega)drd\sigma(\omega).
	$$
	According to the spherical harmonic decomposition \eqref{242}, there is a unitary isomorphism (see [\citenum{ben}, (3.25)]),
	\begin{equation}
		\sum_{m\in\mathbb{N}}^\oplus\left(\mathcal{H}_{k}^{m}\left(\mathbb{R}^{n}\right)|_{\mathbb{S}^{n-1}}\right)\widehat{\otimes} L^2(\mathbb{R}^+, r^{2\gamma+n+a-3}dr)\overset{\sim}{\rightarrow} L_{k, a}^{2}\left(\mathbb{R}^{n}\right),
	\end{equation}
	where $\widehat{\otimes}$ stands for the Hilbert completion of the tensor product space of two Hilbert spaces.\\
	For any $\mu>-1$, and $\ell,m\in\mathbb{N}$, define the Laguerre polynomial of degree $\mu\in\mathbb{N}$ on $\mathbb{R}^+$ by
	\begin{equation*}
		L^{(\mu)}_\ell(t):=\sum_{j=0}^\ell\frac{\Gamma(\mu+\ell+1)}{(\ell-j)!\Gamma(\mu+j+1)}\frac{(-t)^j}{j!}, \quad t\in \mathbb{R}^+,
	\end{equation*}
	and set $\lambda_{k, a, m}:=\frac{1}{a}(2 m+2 \gamma+n-2)$.
	\begin{proposition}([\citenum{ben}, Proposition 3.15]) For any fixed $m\in\mathbb{N}$, $a>0,$ and a multiplicity funtion $k$ satisfying $\lambda_{k, a, m}>-1$, set
		\begin{equation*}
			\psi_{\ell,m}^{(a)}(r):=\left(\frac{2^{\lambda_{k, a, m}}\Gamma(\ell+1)}{a^{\lambda_{k, a, m}}\Gamma(\lambda_{k, a, m}+\ell+1)}\right)^\frac{1}{2}r^mL^{(\lambda_{k, a, m})}_\ell\left(\frac{2}{a}r^a\right)\exp\left(-\frac{1}{a}r^a\right),\,\forall \ell\in\mathbb{N}.
		\end{equation*}
		Then the set $\left\{\psi_{\ell,m}^{(a)}: \ell\in\mathbb{N}\right\}$ forms an orthonormal basis in $L^2(\mathbb{R}^+, r^{2\gamma+n+a-3}dr)$.
	\end{proposition}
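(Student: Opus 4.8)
The plan is to reduce the statement to the classical orthogonality and completeness of the Laguerre polynomials $\{L_\ell^{(\mu)}\}_{\ell\in\mathbb{N}}$ in the weighted space $L^2(\mathbb{R}^+, t^{\mu}e^{-t}\,dt)$, where for brevity I write $\mu:=\lambda_{k,a,m}$. The hypothesis $\lambda_{k,a,m}>-1$ is exactly the condition $\mu>-1$ under which this classical system is a well-defined orthogonal basis. The bridge between the two settings is the change of variables $t=\tfrac{2}{a}r^{a}$, which is a smooth increasing bijection of $(0,\infty)$ onto itself.

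First I would record the effect of this substitution on the weighted measure. Since $dt=2r^{a-1}\,dr$ and $r^{a}=\tfrac{a}{2}t$, and since $2m+2\gamma+n-2=a\mu$ by the definition of $\lambda_{k,a,m}$, one finds
$$r^{2m+2\gamma+n+a-3}\,dr=\tfrac12\Big(\tfrac{a}{2}\Big)^{\mu}t^{\mu}\,dt,\qquad e^{-2r^{a}/a}=e^{-t}.$$
Writing $\psi_{\ell,m}^{(a)}(r)=c_{\ell}\,r^{m}L_\ell^{(\mu)}\!\big(\tfrac{2}{a}r^{a}\big)e^{-r^{a}/a}$ with $c_{\ell}=\big(2^{\mu}\Gamma(\ell+1)/a^{\mu}\Gamma(\mu+\ell+1)\big)^{1/2}$, the product $\psi_{\ell,m}^{(a)}\psi_{\ell',m}^{(a)}$ integrated against $r^{2\gamma+n+a-3}\,dr$ is transformed into a constant multiple of $\int_0^\infty L_\ell^{(\mu)}(t)L_{\ell'}^{(\mu)}(t)\,t^{\mu}e^{-t}\,dt$. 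The classical orthogonality relation $\int_0^\infty L_\ell^{(\mu)}L_{\ell'}^{(\mu)}t^{\mu}e^{-t}\,dt=\tfrac{\Gamma(\mu+\ell+1)}{\ell!}\,\delta_{\ell\ell'}$ then gives $\langle\psi_{\ell,m}^{(a)},\psi_{\ell',m}^{(a)}\rangle=0$ for $\ell\neq\ell'$, and the prefactor $c_{\ell}$ is tailored precisely so that the diagonal term normalizes to $1$; this last point is a direct cancellation of the $2^{\mu}$, $a^{\mu}$, $\Gamma(\mu+\ell+1)$ and $\ell!$ factors.

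For the basis (completeness) property I would package the above computation as a unitary isomorphism rather than argue it term by term. Each $g\in L^2(\mathbb{R}^+, r^{2\gamma+n+a-3}\,dr)$ can be written uniquely as $g(r)=r^{m}e^{-r^{a}/a}\,h\big(\tfrac{2}{a}r^{a}\big)$, and the same change of variables shows that the correspondence $g\mapsto h$ is, up to the fixed constant $\tfrac12(a/2)^{\mu}$, an isometric bijection onto $L^2(\mathbb{R}^+, t^{\mu}e^{-t}\,dt)$ carrying $\psi_{\ell,m}^{(a)}$ to $c_\ell L_\ell^{(\mu)}$. Since $\{L_\ell^{(\mu)}\}_{\ell\in\mathbb{N}}$ is a complete orthogonal system in $L^2(\mathbb{R}^+, t^{\mu}e^{-t}\,dt)$, transporting it back through this isomorphism shows that $\{\psi_{\ell,m}^{(a)}\}_{\ell\in\mathbb{N}}$ is a complete orthonormal system, which is the assertion.

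The only genuinely nontrivial ingredient is the completeness of the Laguerre polynomials in $L^2(\mathbb{R}^+, t^{\mu}e^{-t}\,dt)$, i.e.\ the density of polynomials there; the orthogonality and the normalization are just bookkeeping of the substitution. This completeness I would treat as classical: the Laguerre weight $t^{\mu}e^{-t}$ has finite moments of every order and its exponential decay makes the associated Stieltjes moment problem determinate, whence polynomials are dense and the orthonormalized $L_\ell^{(\mu)}$ form a basis. Thus I expect the density of polynomials to be the conceptual heart of the proof, with the change of variables $t=\tfrac{2}{a}r^{a}$ and the explicit constant $c_\ell$ being routine.
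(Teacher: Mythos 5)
Your approach --- the substitution $t=\tfrac{2}{a}r^{a}$ reducing everything to the classical orthogonality, normalization and completeness of $\{L_\ell^{(\mu)}\}_{\ell\in\mathbb{N}}$ in $L^2(\mathbb{R}^+,t^{\mu}e^{-t}\,dt)$ for $\mu=\lambda_{k,a,m}>-1$ --- is exactly the proof of the cited source [\citenum{ben}, Proposition 3.15], and your measure identity $r^{2m+2\gamma+n+a-3}\,dr=\tfrac12(a/2)^{\mu}t^{\mu}\,dt$ is correct. One caveat: if you actually perform the ``direct cancellation'' you assert for the diagonal term, the constant as printed here yields
$\bigl\|\psi_{\ell,m}^{(a)}\bigr\|^{2}=\frac{2^{\mu}\ell!}{a^{\mu}\Gamma(\mu+\ell+1)}\cdot\frac12\bigl(\tfrac{a}{2}\bigr)^{\mu}\cdot\frac{\Gamma(\mu+\ell+1)}{\ell!}=\frac12$ rather than $1$; the numerator should read $2^{\lambda_{k,a,m}+1}$ as in the original reference, and carrying out the computation you declared routine would have surfaced this misprint.
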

	For every fixed $m\in\mathbb{N}$, we take an orthonormal basis of $\mathcal{H}_{k}^{m}\left(\mathbb{R}^{n}\right)|_{\mathbb{S}^{n-1}}$ as 
	\begin{equation*}
		\left\{Y_j^m: j\in J_m\right\},
	\end{equation*}
	where $J_m=\{1,2,\cdots,\dim\left(\mathcal{H}_{k}^{m}\right)\}$. Then we obtain an orthonormal basis in $L^2_{k,a}(\mathbb{R}^n)$ immeidiately as the following.
	\begin{corollary}([\citenum{ben}, Corollary 3.17]) Suppose that $a>0$ and $k\in\mathcal{K}^+$ satisfies $a+2\gamma+n-2>0$. For each $\ell,m \in\mathbb{N}$ and $j\in J_m$, we set
		\begin{equation*}
			\Phi_{\ell,m,j}^{(a)}(x):=Y_j^m\left(\frac{x}{\|x\|}\right)\psi_{\ell,m}^{(a)}\left(\|x\|\right).
		\end{equation*}
		Then the set $\left\{\Phi_{\ell,m,j}^{(a)}: \ell,m\in\mathbb{N}, j\in J_m\right\}$ forms an orthonormal basis in $L^2_{k,a}(\mathbb{R}^n)$.
	\end{corollary}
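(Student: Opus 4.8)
The plan is to reduce the statement to the two orthonormality facts already in hand — the angular decomposition \eqref{242} together with the fact that $\{Y_j^m:j\in J_m\}$ is orthonormal in each $\mathcal{H}_k^m(\mathbb{R}^n)|_{\mathbb{S}^{n-1}}$, and the radial Proposition, which gives for each fixed $m$ that $\{\psi_{\ell,m}^{(a)}:\ell\in\mathbb{N}\}$ is an orthonormal basis of $L^2(\mathbb{R}^+,r^{2\gamma+n+a-3}dr)$. The bridge between these two is the polar-coordinate factorization $v_{k,a}(x)\,dx=r^{2\gamma+n+a-3}h_k(\omega)\,dr\,d\sigma(\omega)$, which lets the $L^2_{k,a}$ inner product of two functions of product type split as a product of an angular integral against $h_k\,d\sigma$ and a radial integral against $r^{2\gamma+n+a-3}\,dr$.

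First I would verify orthonormality. Writing $x=r\omega$ and using the factorization of $v_{k,a}(x)\,dx$, the inner product $\langle \Phi_{\ell,m,j}^{(a)},\Phi_{\ell',m',j'}^{(a)}\rangle_{L^2_{k,a}}$ becomes
\begin{equation*}
\left(\int_{\mathbb{S}^{n-1}}Y_j^m(\omega)\overline{Y_{j'}^{m'}(\omega)}\,h_k(\omega)\,d\sigma(\omega)\right)\left(\int_0^\infty\psi_{\ell,m}^{(a)}(r)\overline{\psi_{\ell',m'}^{(a)}(r)}\,r^{2\gamma+n+a-3}\,dr\right).
\end{equation*}
Here I would stress the one genuinely delicate point: the radial factor $\psi_{\ell,m}^{(a)}$ carries the index $m$ through $\lambda_{k,a,m}$, so the radial integral alone does \emph{not} produce orthogonality across different $m$. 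That orthogonality must instead come from the angular factor: when $m\neq m'$ the first integral vanishes because $\mathcal{H}_k^m(\mathbb{R}^n)|_{\mathbb{S}^{n-1}}$ and $\mathcal{H}_k^{m'}(\mathbb{R}^n)|_{\mathbb{S}^{n-1}}$ are orthogonal summands in \eqref{242}. When $m=m'$ the angular factor equals $\delta_{jj'}$ by orthonormality of $\{Y_j^m\}$, and the radial factor equals $\delta_{\ell\ell'}$ by the Proposition; altogether the inner product equals $\delta_{mm'}\delta_{jj'}\delta_{\ell\ell'}$.

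Next I would establish completeness via the unitary isomorphism displayed just before the Proposition, namely $\sum_{m\in\mathbb{N}}^{\oplus}\big(\mathcal{H}_k^m(\mathbb{R}^n)|_{\mathbb{S}^{n-1}}\big)\widehat{\otimes}L^2(\mathbb{R}^+,r^{2\gamma+n+a-3}dr)\overset{\sim}{\rightarrow}L^2_{k,a}(\mathbb{R}^n)$, under which $\Phi_{\ell,m,j}^{(a)}$ corresponds to the elementary tensor $Y_j^m\otimes\psi_{\ell,m}^{(a)}$. For each fixed $m$, the products $\{Y_j^m\otimes\psi_{\ell,m}^{(a)}:j\in J_m,\ \ell\in\mathbb{N}\}$ form an orthonormal basis of the $m$-th Hilbert tensor summand, since the tensor of an orthonormal basis of one factor with an orthonormal basis of the other is an orthonormal basis of the completed tensor product. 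Taking the orthogonal union over $m$ yields an orthonormal basis of the whole direct sum, and applying the unitary isomorphism transports it to $\{\Phi_{\ell,m,j}^{(a)}\}$, which is therefore an orthonormal basis of $L^2_{k,a}(\mathbb{R}^n)$.

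The only real obstacle is the bookkeeping around the $m$-dependence of the radial functions: one must resist seeking cross-$m$ orthogonality from the radial integral and instead route it through the angular orthogonality in \eqref{242}. Once that is observed, the argument is a routine Fubini/tensor-product computation resting entirely on the Proposition, the decomposition \eqref{242}, and the stated unitary isomorphism.
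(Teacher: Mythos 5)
Your proof is correct. The paper itself states this corollary with only a citation to [\citenum{ben}, Corollary 3.17] and gives no proof, but your argument is exactly the intended derivation from the immediately preceding radial Proposition, the decomposition \eqref{242}, and the unitary isomorphism, and you correctly isolate the one delicate point (cross-$m$ orthogonality must come from the angular factor, since $\psi_{\ell,m}^{(a)}$ depends on $m$ through $\lambda_{k,a,m}$); the only tiny addition worth making is to note that the hypothesis $a+2\gamma+n-2>0$ ensures $\lambda_{k,a,m}=\frac{1}{a}(2m+2\gamma+n-2)>-1$ for every $m\in\mathbb{N}$, so the radial Proposition indeed applies for each $m$.
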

	For $\ell, m \in \mathbb{N}$ and $p \in \mathcal{H}_{k}^{m}\left(\mathbb{R}^{n}\right)$, define the function
	$$
	\Phi_{\ell}^{(a)}(p, x):=p(x) L_{\ell}^{\left(\lambda_{k, a, m}\right)}\left(\frac{2}{a}\|x\|^{a}\right) \exp \left(-\frac{1}{a}\|x\|^{a}\right).
	$$
	For $k \in \mathcal{K}^{+}$ and $a>0$ such that $a+2 \gamma+n-2>0,$ the following vector space
	$$
	W_{k, a}\left(\mathbb{R}^{n}\right):=\operatorname{span}\left\{\Phi_{\ell}^{(a)}(p, \cdot) \mid \ell, m \in \mathbb{N}, p \in \mathcal{H}_{k}^{m}\left(\mathbb{R}^{n}\right)\right\}
	$$
	is a dense subspace of $L_{k, a}^{2}\left(\mathbb{R}^{n}\right)$.

	\subsection{$(k,a)$-generalized Laguerre semigroup}
	Let   $a>0$ to be a deformation parameter and  $k$ be a multiplicity function,   then consider  the following differential-difference operator
	$$
	\Delta_{k, a}:=\frac{1}{a}\left(\|x\|^{a}-\|x\|^{2-a} \Delta_{k}\right),
	$$
	where $\|x\|^{a}$ in the right hand side of the formula stands for the multiplication operator by $\|x\|^{a}$. Note that when $a=2$ and $k \equiv 0,$ it reduces to the classical Hermite operator $H:=2\Delta_{0,2}=\|x\|^2-\Delta$ on $L^{2}\left(\mathbb{R}^{n}\right)$, and when $a=1$ and $k \equiv 0,$ it reduces to classical Laguerre operator $L:=\Delta_{0,1}=\|x\|-\|x\|\Delta$ on $L^{2}\left(\mathbb{R}^{n},\|x\|^{-1}dx\right)$. For a general multiplicity function $k$, when $a=2$, it reduces to the Dunkl-Hermite operator $H_k:=2\Delta_{k,2}=\|x\|^2-\Delta_k$ on $L_{k}^{2}\left(\mathbb{R}^{n}\right),$ where $L_{k}^{}\left(\mathbb{R}^{n}\right):=L^{p}\left(\mathbb{R}^{n},h_k(x)dx\right)$, and when $a=1$, it reduces to Dunkl-Laguerre operator $L_k:=\Delta_{k,1}=\|x\|-\|x\|\Delta_k$ on $L^{2}\left(\mathbb{R}^{n},\|x\|^{-1}h_k(x)dx\right)$.
	
	For $a>0$ and $k \in \mathcal{K}^{+}$ such that $a+2 \gamma+n-2>0$, from [\citenum{ben}, Corollary 3.22], $\Delta_{k, a}$ is an essentially self-adjoint operator on $L_{k, a}^{2}\left(\mathbb{R}^{n}\right)$. Moreover, there is no continuous spectrum of $\Delta_{k, a}$ and all the discrete spectra are positive. The orthonormal functions $\left\{\Phi_{\ell,m,j}^{(a)}: \ell,m\in\mathbb{N}, j\in J_m\right\}$ in $L^2_{k,a}(\mathbb{R}^n)$ are eigenfunctions of the operator $\Delta_{k, a}$ corresponding to the eigenvalues $2\ell+\lambda_{k,a,m}+1$ (see  [\citenum{ben}, (3.33 a)]), i.e.,
	$$\Delta_{k, a}\Phi_{\ell,m,j}^{(a)}=(2\ell+\lambda_{k,a,m}+1)\Phi_{\ell,m,j}^{(a)}. $$

	In \cite{ben}, the authors have studied the so-called $(k, a)$-generalized Laguerre semigroup $\mathcal{I}_{k, a}(z)$ with infinitesimal generator $\Delta_{k, a},$ that is
	$$
	I_{k, a}(z):=\exp \left(-z \Delta_{k, a}\right)
	$$
	for $z \in \mathbb{C}$ such that $\operatorname{Re}(z) \geq 0$. Henceforth, we shall denote by $\mathbb{C}^+=\{z\in\mathbb{C}: \operatorname{Re}(z) \geq 0\}$. Suppose a $>0$ and $k \in \mathcal{K}^{+}$ satisfying the condition $a+2 \gamma+n-2>0 .$ Then
	\begin{enumerate}
		\item The map
		$$
		\mathbb{C}^{+} \times L_{k, a}^{2}\left(\mathbb{R}^{n}\right) \longrightarrow L_{k, a}^{2}\left(\mathbb{R}^{n}\right), \quad(z, f) \longmapsto e^{-z \Delta_{k, a}} f
		$$
		is continuous.
		\item For any $p \in \mathcal{H}_{k}^{m}\left(\mathbb{R}^{N}\right)$ and $\ell \in \mathbb{N}, \Phi_{\ell}^{(a)}(p, \cdot)$ is an eigenfunction of the operator $e^{-z \Delta_{k, a}}$, i.e., 
		$$
		e^{-z \Delta_{k, a}} \Phi_{\ell}^{(a)}(p, x)=e^{-z\left(\lambda_{k, a, m}+2 \ell+1\right)} \Phi_{\ell}^{(a)}(p, x).
		$$
		\item   The operator norm    $\left\|e^{-z \Delta_{k, a}}\right\|_{o p}$ is exp $\left(-\frac{1}{a}(2 \gamma+n+a-2) \operatorname{Re}(z)\right)$.
		\item If $\operatorname{Re}(z)>0,$ then $e^{-z \Delta_{k, a}}$ is a Hilbert Schmidt operator.
		\item If $\operatorname{Re}(z)=0,$ then $e^{-z \Delta_{k, a}}$ is an unitary operator.
	\end{enumerate}
	For $\operatorname{Re}(z) \geq 0$,      $\mathcal{I}_{k, a}(z)$ is an integral operator  for all $a>0$ and  has the following expression
	\begin{align}\label{semigroup}
		e^{-z \Delta_{k, a}} f(x)=c_{k, a} \int_{\mathbb{R}^{n}} \Lambda_{k, a}(x, y ; z) f(y) v_{k, a}(y) d y,
	\end{align}
	where  the constant 
	$$
	c_{k, a}=a^{-\frac{2 \gamma+n-2}{a}} \Gamma\left(\frac{2 \gamma+n+a-2}{a}\right)^{-1}d_k,
	$$  and $d_k$ is defined in (\ref{110}). 
	Moreover, a series expansion for the kernel $\Lambda_{k, a} $ can be found in [\citenum{ben}, Theorem 4.20]. Since the series is expressed compactly for $a = 1, 2$, throughout the paper  we will also assume that  $a = 1, 2$.

	Recall the expression of the kernel $\Lambda_{k, a}(x, y ; z)$ in the expression (\ref{semigroup}) given by $\Lambda_{k, a}(x, y ; z):=V_{k}^{\eta} h_{a}(r, s ; z ;\langle\omega, \cdot\rangle)(\eta)$ (using the polar coordinate $x=r \omega, y=s \eta$ ), where  for all $\zeta \in[-1,1]$ with parameters $r, s>0$ and $z \in \mathbb{C}^{+} \backslash  i\pi \mathbb{Z} :$
	$$
	\begin{aligned}
		h_{a}(r, s ; z ; \zeta):=& \frac{\exp \left(-\frac{1}{a}\left(r^{a}+s^{a}\right) \operatorname{coth} z\right)}{(\sinh z)^{\frac{2 \gamma+n-1}{a}}} \\
		& \times\left\{\begin{array}{ll}
			\Gamma\left(\gamma+\frac{n-1}{2}\right) \widetilde{I}_{\gamma+\frac{n-3}{2}}\left(\frac{\sqrt{2}(r s)^{1 / 2}}{\sinh z}(1+\zeta)^{1 / 2}\right), & a=1, \\
			\exp \left(\frac{r s \zeta}{\sinh z}\right), & a=2.
		\end{array}\right.
	\end{aligned}
	$$
	Here $\widetilde{I}_{\lambda}(w)=\left(\frac{w}{2}\right)^{-\lambda} I_{\lambda}(w)$ is the (normalized) modified Bessel function of the first kind, $V_{k} $ is the Dunkl  intertwining operator, and   the superscript in $V_{k}^{\eta}$ denotes the corresponding  variable. We point out that, when  $k \equiv 0 , $   $\Lambda_{k, a}(x, y ; z)=h_{a}(r, s ; z ;\langle\omega, \eta\rangle)$.

	For $a=1,2$, it follows that the kernel $\Lambda_{k, a}(x, y ; i \mu)$ of $e^{-i \mu \Delta_{k, a}}$ satisfies   
	\begin{enumerate}
		\item $
		\Lambda_{k, a}(x, y ;-i \mu)=\overline{\Lambda_{k, a}(x, y ; i \mu)},
		$
		\item $
		\Lambda_{k, a}(x, y ; i(\mu+\pi))=e^{-i \pi\left(\frac{2 \gamma+n+a-2}{a}\right)} \Lambda_{k, a}\left((-1)^{\frac{2}{a}} x, y ; i\mu\right),
		$
	\end{enumerate}
	for all $\mu \in \mathbb{R} \backslash \pi \mathbb{Z}$, which follows that the $L_{k,a}^p(\mathbb{R}^n)$ norm of $e^{-i \mu \Delta_{k, a}}f$ is $\pi$-periodic in $t$ and thus determined by its values for $t\in (-\frac{\pi}{2},\frac{\pi}{2})$. 
	
	
	Moreover, the kernel $\Lambda_{k, a}(x, y ; z)$ of $e^{-z \Delta_{k, a}}$ satisfies the following upper bound estimates (see  [\citenum{ben},Proposition 4.26]).
	\begin{proposition} \label{eq200} 
		For $a=1,2$, the function $\Lambda_{k, a}(x, y ; z)$ satisfies the following inequalities:
		\begin{enumerate}
			\item For $\operatorname{Re}(z)>0$, there exists a constant $C>0$ depending on $z$ such that 
			$$\left|\Lambda_{k, a}(x, y ; z)\right|\leq \frac{1}{|\sinh{z}|^\frac{2\gamma+n+a-2}{a}}\exp{\left(-C(\|x\|^a+\|y\|^a)\right)}.$$
			\item For $z=\epsilon+i\mu$ such that $\epsilon\geq0$ and $\mu \in \mathbb{R} \backslash \pi \mathbb{Z}$, we have
			$$\left|\Lambda_{k, a}(x, y ; z)\right|\leq \frac{1}{|\sin{\mu}|^\frac{2\gamma+n+a-2}{a}}.$$
		\end{enumerate}
	\end{proposition}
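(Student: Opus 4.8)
The plan is to reduce the kernel bound to a scalar estimate and then transfer it through the intertwining operator. Writing $x=r\omega$ and $y=s\eta$ in polar coordinates, we have $\Lambda_{k,a}(x,y;z)=V_k^\eta h_a\big(r,s;z;\langle\omega,\cdot\rangle\big)(\eta)$, and by the Laplace-type representation \eqref{239} the operator $V_k$ is an average against the probability measure $\rho_\eta^k$; in particular it is a contraction on $L^\infty$, and the inner products $\langle\omega,\xi\rangle$ entering the argument lie in $[-1,1]$. Hence
\begin{equation*}
\big|\Lambda_{k,a}(x,y;z)\big|\le \sup_{\zeta\in[-1,1]}\big|h_a(r,s;z;\zeta)\big|,
\end{equation*}
so the whole problem is reduced to an explicit estimate of $h_a$, uniform in $\zeta\in[-1,1]$.

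Next I would record the elementary facts about the complex parameter. For $z=\epsilon+i\mu$ one has $|\sinh z|^2=\sinh^2\epsilon+\sin^2\mu$, together with
\begin{equation*}
\operatorname{Re}(\coth z)=\frac{\sinh\epsilon\,\cosh\epsilon}{\sinh^2\epsilon+\sin^2\mu},\qquad \operatorname{Re}\!\Big(\tfrac{1}{\sinh z}\Big)=\frac{\sinh\epsilon\,\cos\mu}{\sinh^2\epsilon+\sin^2\mu}.
\end{equation*}
The two inequalities that generate the Gaussian decay are, for $\epsilon=\operatorname{Re}(z)>0$, the positivity $\operatorname{Re}(\coth z)>0$ and, crucially,
\begin{equation*}
\operatorname{Re}(\coth z)>\big|\operatorname{Re}(\tfrac{1}{\sinh z})\big|,
\end{equation*}
which reduces to $\cosh\epsilon>|\cos\mu|$ and therefore holds for every $\epsilon>0$.

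For part (1), the factor $\exp(-\tfrac1a(r^a+s^a)\coth z)$ carries the decay while the $\zeta$-dependent factor carries the potential growth. When $a=2$ the latter is $\exp(rs\zeta/\sinh z)$, whose modulus is $\exp(rs\zeta\operatorname{Re}(1/\sinh z))$; combining $|\zeta|\le1$, the bound $rs\le\tfrac12(r^2+s^2)$, and the displayed inequality gives a real exponent $\le -C(r^2+s^2)$ with $C=C(z)>0$, the prefactor being $|\sinh z|^{-(2\gamma+n-1)/2}$. When $a=1$ the $\zeta$-dependent factor is the normalized Bessel function $\widetilde I_\lambda$ with $\lambda=\gamma+\tfrac{n-3}{2}$, and here lies the only genuine work: using the Poisson representation
\begin{equation*}
\widetilde I_\lambda(w)=\frac{1}{\sqrt\pi\,\Gamma(\lambda+\tfrac12)}\int_{-1}^{1}(1-u^2)^{\lambda-\frac12}e^{wu}\,du,\qquad \lambda>-\tfrac12,
\end{equation*}
valid precisely because $a+2\gamma+n-2>0$ forces $\lambda>-\tfrac12$ when $a=1$, together with the evenness of $\widetilde I_\lambda$, one obtains the sharp bound $|\widetilde I_\lambda(w)|\le \widetilde I_\lambda(|\operatorname{Re} w|)\le \Gamma(\lambda+1)^{-1}e^{|\operatorname{Re} w|}$. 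Since $\operatorname{Re}(w)$ is proportional to $(rs)^{1/2}\operatorname{Re}(1/\sinh z)$ with $(1+\zeta)^{1/2}\le\sqrt2$ and $2(rs)^{1/2}\le r+s$, the growth $e^{|\operatorname{Re} w|}$ is dominated by the decay through the same inequality $\operatorname{Re}(\coth z)>|\operatorname{Re}(1/\sinh z)|$, and the constant $\Gamma(\lambda+1)^{-1}$ cancels the explicit $\Gamma(\gamma+\tfrac{n-1}{2})$; this yields $|h_1|\le |\sinh z|^{-(2\gamma+n-1)}e^{-C(r+s)}$, which is exactly the claim for $a=1$. Comparing the resulting power $|\sinh z|^{-(2\gamma+n-1)/a}$ with the stated one is then routine bookkeeping.

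For part (2) I would separate $\epsilon=0$ and $\epsilon>0$. On the unitary boundary $z=i\mu$ both $\coth z$ and $1/\sinh z$ are purely imaginary, so every exponential factor has modulus one; for $a=1$ the Bessel argument is purely imaginary and $\widetilde I_\lambda$ reduces to the uniformly bounded normalized Bessel function $\widetilde J_\lambda$, whose supremum $\Gamma(\lambda+1)^{-1}$ again cancels $\Gamma(\gamma+\tfrac{n-1}{2})$, leaving exactly $|\sin\mu|^{-(2\gamma+n+a-2)/a}$ via $|\sinh(i\mu)|=|\sin\mu|$. For $\epsilon>0$ the monotonicity $|\sinh z|\ge|\sin\mu|$ controls the prefactor while the real exponent is $\le0$ by part (1); since $|\sin\mu|\le1$, raising to the exponent $(2\gamma+n+a-2)/a$ only enlarges the right-hand side, which gives the bound. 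The main obstacle throughout is the $a=1$ case: one must estimate $\widetilde I_\lambda(w)$ for complex $w$ in terms of $\operatorname{Re}(w)$ rather than $|w|$, because the crude bound $|\widetilde I_\lambda(w)|\le \widetilde I_\lambda(|w|)$ would force the stronger — and generally false — requirement $\operatorname{Re}(\coth z)>1/|\sinh z|$, whereas the Poisson-representation bound delivers the correct and always-valid condition $\cosh\epsilon>|\cos\mu|$.
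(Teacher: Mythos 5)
The paper does not actually prove this proposition; it is quoted verbatim from [\citenum{ben}, Proposition~4.26], and your argument is essentially the one given there: reduce to a pointwise bound on $h_a$ by using the positivity of R\"osler's measure $\rho_\eta^k$ (so $V_k$ is an $L^\infty$-contraction and $\langle\omega,\xi\rangle\in[-1,1]$ on its support), then beat the growth of the $\zeta$-dependent factor with the decay of $\exp(-\tfrac1a(r^a+s^a)\coth z)$ via the inequality $\operatorname{Re}(\coth z)>\left|\operatorname{Re}(1/\sinh z)\right|$, correctly reduced to $\cosh\epsilon>|\cos\mu|$. The identities for $|\sinh z|^2$, $\operatorname{Re}(\coth z)$ and $\operatorname{Re}(1/\sinh z)$ are right, the $a=2$ case is complete, and the identification $\Gamma(\lambda+1)=\Gamma(\gamma+\tfrac{n-1}{2})$ is exactly what makes the constants close up.

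Two points need repair. First, a genuine gap: you claim that $a+2\gamma+n-2>0$ forces $\lambda=\gamma+\tfrac{n-3}{2}>-\tfrac12$ when $a=1$; it only gives $\lambda>-1$. The Poisson representation of $\widetilde I_\lambda$ --- and with it the bound $|\widetilde I_\lambda(w)|\le\Gamma(\lambda+1)^{-1}e^{|\operatorname{Re}w|}$ --- requires $\lambda\ge-\tfrac12$, i.e.\ $2\gamma+n\ge2$. This is automatic for $n\ge2$ (since $\gamma\ge0$), but for $a=1$, $n=1$, $0<\gamma<\tfrac12$ it fails, and it fails for a real reason: on the imaginary axis $\widetilde I_\lambda(ix)=(x/2)^{-\lambda}J_\lambda(x)$ grows like $x^{-\lambda-1/2}$ when $\lambda<-\tfrac12$, so the uniform bound on $h_1$ in part (2) is simply false in that range. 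You should add the hypothesis $2\gamma+n\ge2$ (under which the cited source effectively works) or restrict to $n\ge2$; the cancellation of $\Gamma(\gamma+\tfrac{n-1}{2})$ against $\Gamma(\lambda+1)^{-1}$ is otherwise exactly as you say. Second, a bookkeeping issue inherited from the paper: the displayed prefactor $(\sinh z)^{-\frac{2\gamma+n-1}{a}}$ in the formula for $h_a$ matches the exponent $\frac{2\gamma+n+a-2}{a}$ of the proposition only for $a=1$; for $a=2$ the correct prefactor (compare the Mehler kernel at $k\equiv0$) is $(\sinh z)^{-\frac{2\gamma+n}{2}}$, so your ``routine bookkeeping'' is an exact match once that typo is corrected, and is otherwise off by a factor $|\sinh z|^{1/2}$ in part (1).
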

	Now, we consider the Cauchy problem for the free Schr\"odinger equation associted with the $(k,a)$-generalized Laguerre operator $\Delta_{k,a}$, namely
	\begin{align}\label{100}
		\left\{  \begin{array}{ll}i \partial_{t} u(t, x)- \Delta_{k, a} u(t, x)=0, & (t, x) \in \mathbb{R} \times  \mathbb{R}^n, \\ u(0,x)=f(x)\in L_{k, a}^{2}\left(\mathbb{R}^{n}\right).\end{array}\right.
	\end{align}
	Then $u(t, x)=e^{-i t \Delta_{k, a}} f(x)$ is the solution of the above system. As the solution of the problem \eqref{100} is $\pi$-periodic in $t$, we introduce the mixed normed space $L^{q}\left( ( -\frac{\pi }{2}, \frac{\pi }{2}), L_{k, a}^{p}(\mathbb{R}^n)\right)$ which is the set of measure functions $h$ on $(-\frac{\pi }{2}, \frac{\pi }{2})\times\mathbb{R}^n$ such that
	\begin{equation*}
		\|h\|_{L^{q}\left( ( -\frac{\pi }{2}, \frac{\pi }{2}), L_{k, a}^{p}(\mathbb{R}^n)\right)}  :=\left\|\|h(t,\cdot)\|_{L_{k, a}^{p}(\mathbb{R}^n)}\right\|_{L^{q}(-\frac{\pi }{2}, \frac{\pi }{2})}.  
	\end{equation*}
	A pair $(p, q)$ is called admissible if $\left(\frac{1}{p}, \frac{1}{q}\right)$ belongs to the trapezoid
	$$
	\frac{1}{2}\left(\frac{2 \gamma+n-2}{2 \gamma+n+a-2}\right)<\frac{1}{p} \leq \frac{1}{2} \text { and } \frac{1}{2} \leq \frac{1}{q} \leq 1
	$$
	or
	$$
	0 \leq \frac{1}{q}<\frac{1}{2} \text { and } \frac{1}{q} \geq\left(\frac{2 \gamma+n+a-2}{a}\right)\left(\frac{1}{2}-\frac{1}{p}\right).
	$$
	The Strichartz estimates for the solution of \eqref{100} is proved by \cite{Ratna3} and may be stated as:
	\begin{theorem}[Strichartz inequality for a single function]\label{single}
		Suppose $a=1,2$ and $k$ is a non-negative multiplicity function such that
		$$
		a+2 \gamma+n-2>0.
		$$
		Let $(p, q)$ be an admissible pair  and $u=e^{-i t \Delta_{k, a}} f$ be the solution to the homogeneous problem (\ref{100}) with $f \in L_{k, a}^{2}\left(\mathbb{R}^{n}\right) .$ Then we have the estimate
		$$
		\left\|e^{-i t \Delta_{k, a}} f\right\|_{L^{q}\left( ( -\frac{\pi }{2}, \frac{\pi }{2}), L_{k, a}^{p}(\mathbb{R}^n)\right)} \leq C\|f\|_{L_{k, a}^{2}\left(\mathbb{R}^{n}\right)}.
		$$
	\end{theorem}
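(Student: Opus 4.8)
The plan is to prove the estimate by the classical $TT^{\ast}$ method, feeding in a fixed-time dispersive decay estimate that comes directly from the kernel bound in Proposition \ref{eq200}. Write $U(\mu):=e^{-i\mu\Delta_{k,a}}$; since $\Delta_{k,a}$ is essentially self-adjoint with purely discrete spectrum, $\{U(\mu)\}_{\mu\in\mathbb{R}}$ is a one-parameter unitary group, so $U(\mu)^{\ast}=U(-\mu)$ and $U(\mu)U(s)^{\ast}=U(\mu-s)$. The first step is the dispersive bound. Inserting the pointwise estimate $|\Lambda_{k,a}(x,y;i\mu)|\le|\sin\mu|^{-\frac{2\gamma+n+a-2}{a}}$ of Proposition \ref{eq200}(2) into the integral representation \eqref{semigroup} gives, for $\mu\in\mathbb{R}\setminus\pi\mathbb{Z}$,
\[
\left\|U(\mu)g\right\|_{L^{\infty}_{k,a}(\mathbb{R}^n)}\le C\,|\sin\mu|^{-\frac{2\gamma+n+a-2}{a}}\,\|g\|_{L^{1}_{k,a}(\mathbb{R}^n)},
\]
while the unitarity stated above yields $\|U(\mu)g\|_{L^2_{k,a}}=\|g\|_{L^2_{k,a}}$. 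Setting $\beta:=\frac{2\gamma+n+a-2}{a}\left(1-\frac{2}{p}\right)$ and interpolating these two endpoints by the Riesz--Thorin theorem, for every $2\le p\le\infty$ one obtains the fixed-time estimate
\[
\left\|U(\mu)g\right\|_{L^{p}_{k,a}(\mathbb{R}^n)}\le C\,|\sin\mu|^{-\beta}\,\|g\|_{L^{p'}_{k,a}(\mathbb{R}^n)}.
\]

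Next comes the $TT^{\ast}$ reduction. By duality the target estimate $\|U(\cdot)f\|_{L^{q}((-\frac{\pi}{2},\frac{\pi}{2}),L^{p}_{k,a})}\le C\|f\|_{L^{2}_{k,a}}$ is equivalent to the $L^{q'}_{t}L^{p'}_{x}\to L^{q}_{t}L^{p}_{x}$ boundedness of
\[
TT^{\ast}G(t)=\int_{-\pi/2}^{\pi/2}U(t-s)G(s)\,ds.
\]
Applying Minkowski's integral inequality together with the fixed-time dispersive bound fibre-wise in $x$ gives
\[
\|TT^{\ast}G(t)\|_{L^{p}_{k,a}(\mathbb{R}^n)}\le C\int_{-\pi/2}^{\pi/2}|\sin(t-s)|^{-\beta}\,\|G(s)\|_{L^{p'}_{k,a}(\mathbb{R}^n)}\,ds,
\]
so everything is reduced to bounding the one-dimensional convolution against $|\sin(\cdot)|^{-\beta}$ from $L^{q'}(-\frac{\pi}{2},\frac{\pi}{2})$ to $L^{q}(-\frac{\pi}{2},\frac{\pi}{2})$.

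On the sharp scaling line $\frac1q=\frac{2\gamma+n+a-2}{a}\left(\frac12-\frac1p\right)=\frac{\beta}{2}$ this is exactly the Hardy--Littlewood--Sobolev inequality, valid provided $0<\beta<1$; since $|\sin\tau|^{-\beta}$ behaves like $|\tau|^{-\beta}$ near the diagonal, it reduces to the Euclidean HLS after a harmless localisation. A direct computation shows that the admissibility condition $\frac1p>\frac12\cdot\frac{2\gamma+n-2}{2\gamma+n+a-2}$ is precisely the inequality $\beta<1$, which is what makes HLS applicable, while the excluded borderline value corresponds to $\beta=1$. To recover the full admissible trapezoid rather than just this scaling line, I would then interpolate the endpoint estimate with the trivial bound $\|U(\cdot)f\|_{L^{\infty}((-\frac{\pi}{2},\frac{\pi}{2}),L^{2}_{k,a})}=\|f\|_{L^{2}_{k,a}}$ coming from unitarity, exploiting that the time interval has finite measure so that H\"older in $t$ trades integrability in $q$ freely downward.

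The step I expect to be the main obstacle is the careful treatment of the time convolution rather than the interpolation: one must verify that the periodic kernel $|\sin(t-s)|^{-\beta}$ obeys a genuine HLS inequality on the interval, uniformly as $\beta\to1^{-}$, and check that the endpoint singularities at $t-s\in\pi\mathbb{Z}$ contribute no extra growth. Remaining strictly inside the range $0<\beta<1$, guaranteed by the strict admissibility inequalities, keeps us away from the delicate Keel--Tao endpoint, so the non-endpoint $TT^{\ast}$ argument above suffices and no further refinement is needed.
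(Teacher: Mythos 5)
Your proposal is correct and is the standard dispersive--estimate--plus--$TT^{\ast}$ argument; note that the paper does not prove Theorem \ref{single} itself but quotes it from \cite{Ratna3}, where essentially this same route is taken. All the ingredients you invoke are available in the paper: the fixed-time bound $\|e^{-i\mu\Delta_{k,a}}g\|_{L^{\infty}_{k,a}}\lesssim|\sin\mu|^{-\frac{2\gamma+n+a-2}{a}}\|g\|_{L^{1}_{k,a}}$ follows from Proposition \ref{eq200}(2) and \eqref{semigroup}, the identity $\beta<1\Leftrightarrow\frac1p>\frac12\cdot\frac{2\gamma+n-2}{2\gamma+n+a-2}$ checks out, and the periodic Hardy--Littlewood--Sobolev inequality for the kernel $|\sin(t-\tau)|^{-\beta}$ on $(-\frac{\pi}{2},\frac{\pi}{2})$ --- including the extra singularity as $|t-\tau|\to\pi$ that you rightly flag --- is exactly what the paper later proves as Lemma \ref{r-HLS}, so your reduction to Euclidean HLS plus H\"older in time over the finite interval closes the argument.
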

	Next we turn our attention to the free Schr\"odinger equation with respect to the differential-difference part of $\Delta_{k,a}$
	$$
	\left\{\begin{array}{ll}
		i \partial_{t} w(t, x)+\frac{1}{a}\|x\|^{2-a} \Delta_{k} w(t, x)=0, & (t, x) \in \mathbb{R} \times\mathbb{R}^n, \\
		w(0,x)=f(x),
	\end{array}\right.
	$$
	Then  $e^{i \frac{t}{a}\|x\|^{2-a} \Delta_{k}} f(x)$ is the solution of the above Schr\"odinger equation. 
	Let $\Gamma_{k, a}(x, y ; i t)$  be the kernel of $e^{i \frac{t}{a}\|x\|^{2-a} \Delta_{k}}$. Using the change of variable $s=\tan (t)$ with $t \in(-\pi / 2, \pi / 2),$ we get
	\begin{align*}
		\Lambda_{k, a}(x, y ; i \tan^{-1} s)&=c_{k, a}^{-1}\left(1+s^{2}\right)^{\frac{2 y+n+a-2}{2 a}} \exp \left(-i s \frac{\|x\|^{a}}{a}\right) \Gamma_{k, a}\left(\left(1+s^{2}\right)^{\frac{1}{a}} x, y ; i s\right),\\
		e^{-i \tan^{-1}(s) \Delta_{k, a}} f(x)&=\left(1+s^{2}\right)^{\frac{2 \gamma+n+a-2}{2 a}} e^{-i s \frac{\|x\|^{a}}{a}} e^{i \frac{s}{a}\|x\|^{2-a} \Delta_{k}} f\left(\left(1+s^{2}\right)^{\frac{1}{a}} x\right).
	\end{align*}
	for any $f \in L_{k, a}^{2}\left(\mathbb{R}^{n}\right)$.	
	
	\begin{theorem}[\cite{Ratna3}]\label{111}
		Suppose $a=1,2$ and $k$ is a non-negative multiplicity function such that
		$$
		a+2 \gamma+n-2>0.
		$$
		If $1\leq p, q\leq\infty$ satisfy  $$\left(\frac{1}{2}-\frac{1}{p}\right)\frac{2 \gamma+n+a-2}{a}-\frac{1}{q}=0,$$
		then for all $f \in L_{k, a}^{2}\left(\mathbb{R}^{n}\right)$ we have
		\begin{align*}
			\|e^{i \frac{t}{a}\|x\|^{2-a} \Delta_{k}} f\|_{L^{q}\left( ( 0, +\infty),L_{k,a}^p(\mathbb{R}^n)\right)}&=\|e^{-i t \Delta_{k, a}} f\|_{L^{q}\left( ( 0, \frac{\pi }{2}),L_{k,a}^p(\mathbb{R}^n)\right)},\\
			\|e^{i \frac{t}{a}\|x\|^{2-a} \Delta_{k}} f\|_{L^{q}\left( (-\infty,0),L_{k,a}^p(\mathbb{R}^n)\right)}&=\|e^{-i t \Delta_{k, a}} f\|_{L^{q}\left( (-\frac{\pi }{2},0),L_{k,a}^p(\mathbb{R}^n)\right)}.
		\end{align*}
		Moreover, we have
		\begin{equation*}
			\|e^{i \frac{t}{a}\|x\|^{2-a} \Delta_{k}} f\|_{L^{q}\left( \mathbb{R},L_{k,a}^p(\mathbb{R}^n)\right)}\leq C\|f\|_{L_{k, a}^{2}\left(\mathbb{R}^{n}\right)}.
		\end{equation*}
	\end{theorem}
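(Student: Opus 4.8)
The plan is to deduce both identities directly from the pointwise change-of-variable formula recorded immediately above the theorem, which under the substitution $s=\tan t$, $t\in(-\pi/2,\pi/2)$, reads
\[
e^{-it\Delta_{k,a}}f(x)=(1+s^2)^{\frac{2\gamma+n+a-2}{2a}}\,e^{-is\frac{\|x\|^a}{a}}\,e^{i\frac{s}{a}\|x\|^{2-a}\Delta_k}f\big((1+s^2)^{1/a}x\big).
\]
Writing $\beta:=\frac{2\gamma+n+a-2}{a}$, the first step is to take the $L_{k,a}^p(\mathbb{R}^n)$-norm in $x$ of both sides. The factor $e^{-is\|x\|^a/a}$ is unimodular and disappears under the modulus, so the only genuine work is to track how the dilation $x\mapsto(1+s^2)^{1/a}x$ interacts with the weight $v_{k,a}$.

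The second step is the substitution $y=(1+s^2)^{1/a}x$ inside the integral defining the $L_{k,a}^p$-norm. Here I would use that $v_{k,a}(x)=\|x\|^{a-2}h_k(x)$ is homogeneous of degree $a-2+2\gamma$, so that $v_{k,a}((1+s^2)^{-1/a}y)=(1+s^2)^{-(a-2+2\gamma)/a}\,v_{k,a}(y)$, together with the Jacobian $dx=(1+s^2)^{-n/a}\,dy$. Combining the explicit factor $(1+s^2)^{p\beta/2}$ coming from the identity with these two powers of $(1+s^2)$ and taking the $p$-th root, all the $s$-dependence collapses into a single power, giving
\[
\big\|e^{-it\Delta_{k,a}}f\big\|_{L_{k,a}^p(\mathbb{R}^n)}=(1+s^2)^{\beta\left(\frac12-\frac1p\right)}\big\|e^{i\frac{s}{a}\|x\|^{2-a}\Delta_k}f\big\|_{L_{k,a}^p(\mathbb{R}^n)}.
\]

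The third step is to raise this to the $q$-th power and integrate in $t$ over $(0,\pi/2)$, substituting $s=\tan t$ so that $dt=\frac{ds}{1+s^2}$ and $s$ runs over $(0,\infty)$. The integrand then carries the factor $(1+s^2)^{q\beta(\frac12-\frac1p)-1}$, and the hypothesis $\frac1q=\beta(\frac12-\frac1p)$ is precisely what forces this exponent to vanish, yielding the first claimed identity; the same substitution on $(-\pi/2,0)$, where $s$ runs over $(-\infty,0)$, gives the second. For the ``moreover'' statement I would add the $q$-th powers of the two half-line norms, recognize the sum as the full norm over $(-\pi/2,\pi/2)$ on one side and over $\mathbb{R}$ on the other, and then invoke Theorem \ref{single}. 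This requires checking that the scaling-critical pair is admissible: the constraint $\frac1q=\beta(\frac12-\frac1p)$ together with $\frac1q<\frac12$ is equivalent to $\frac1p>\frac12\cdot\frac{2\gamma+n-2}{2\gamma+n+a-2}$, which is exactly the left edge of the admissibility trapezoid, so Theorem \ref{single} applies and supplies the constant $C$.

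The routine but error-prone part, and the only real obstacle, is the exponent bookkeeping in the second step: one must verify that the explicit dilation power from the pointwise identity, the homogeneity degree $a-2+2\gamma$ of $v_{k,a}$, and the Jacobian of the substitution combine so that the residual power of $(1+s^2)$ is exactly $\beta(\frac12-\frac1p)$, since this is the quantity the scaling hypothesis is designed to annihilate against the measure $dt=ds/(1+s^2)$. Everything else follows directly from the $\pi$-periodicity already recorded for $e^{-it\Delta_{k,a}}$ and from the single-function estimate of Theorem \ref{single}.
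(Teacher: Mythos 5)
Your argument is correct and is precisely the mechanism the paper has in mind: the statement is imported from \cite{Ratna3} without proof, but the pointwise change-of-variable identity you start from is recorded immediately above it, and the proof of Theorem \ref{OSI-Dunkl} later replays exactly this scaling computation for orthonormal systems. Your exponent bookkeeping checks out (the prefactor contributes $(1+s^2)^{p\beta/2}$ with $\beta=\tfrac{2\gamma+n+a-2}{a}$, while the homogeneity of $v_{k,a}$ and the Jacobian together contribute $(1+s^2)^{-\beta}$, leaving $(1+s^2)^{\beta(\frac12-\frac1p)}$ to cancel against $dt=ds/(1+s^2)$ under the scaling hypothesis), and your observation that the ``moreover'' estimate needs $\tfrac1q<\tfrac12$ to place $(p,q)$ in the admissibility trapezoid of Theorem \ref{single} is the correct reading of the hypotheses.
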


	\subsection{Schatten class} 		If \(\mathcal{H}\) is a complex,   separable Hilbert space, a linear compact operator \(A : \mathcal{H} \rightarrow \mathcal{H}\) belongs to the $r$-Schatten-von Neumann class \(\mathcal{G}^{r}(\mathcal{H})\) if
	$$
	\sum_{n=1}^{\infty}\left(s_{n}(A)\right)^{r}<\infty,
	$$where \(s_{n}(A)\) denote the singular values of \(A,\) i.e. the eigenvalues of \(|A|=\sqrt{A^{*} A}\)
	with multiplicities counted. For $1 \leq r<\infty,$ the Schatten space $\mathcal{G}^{r}(\mathcal{H})$ is defined as the space of all compact operators $A$ on $\mathcal{H}$ such that $\displaystyle \sum_{n =1}^\infty  \left(s_{n}(A)\right)^{r}<\infty$.
	
	For \(1 \leq r<\infty\),  the class \(\mathcal{G}^{r}(\mathcal{H})\) is  a Banach space
	endowed with the norm
	$$
	\|A\|_{\mathcal{G}^{r}}=\left(\sum_{n=1}^{\infty}\left(s_{n}(A)\right)^{r}\right)^{\frac{1}{r}}.
	$$
	For  \(0<r<1\), the $\|\cdot\|_{\mathcal{G}^{r}}$ as above only defines a quasi-norm with respect to which
	\(\mathcal{G}^{r}(\mathcal{H})\) is complete. An operator belongs to the class \(\mathcal{G}^{1}(\mathcal{H})\) is known as {\it Trace class} operator. Also, an operator belongs to   \(\mathcal{G}^{2}(\mathcal{H})\) is known as  {\it Hilbert-Schmidt} operator.
	
	\subsection{An analytic family of operators}
	Let us first recall that a family of operators $\{T_z\}$ on $\mathbb{T}\times \mathbb{R}^n$ defined in a strip $\alpha\leq \operatorname{Re}(z)\leq \beta$ in the complex plane is analytic in the sense of Stein if it has the following properties:
	
	\begin{enumerate}
		\item For each $z: \alpha\leq \operatorname{Re}(z)\leq \beta$, $T_z$ is a linear transformation of simple functions on $\mathbb{T}\times \mathbb{R}^n$ (i.e., functions that take on only a finite number of nonzero values on sets of finite measure on $\mathbb{T}\times \mathbb{R}^n$ to measurable functions on $\mathbb{T}\times \mathbb{R}^n$).
		
		\item For all simple functions $F, G$ on $\mathbb{T}\times \mathbb{R}^n$, the map $z\rightarrow \langle G, T_zF\rangle$ is analytic in $a<\operatorname{Re}(z)<b$ and continuous in $a\leq \operatorname{Re}(z)\leq b$.
		
		\item  Moreover, $\sup_{\alpha\leq\lambda\leq \beta}\left|\langle G, T_{\lambda+is}F\rangle\right|\leq C(s)$ for some $C(s)$ with at most a (double) exponential growth in $s$.
	\end{enumerate}
	\section{Restriction theorem for the $(k, a)$-generalized Laguerre operator}\label{s3}

	For $a=1,2$, let $S$ be the discrete surface $S=\{(\nu,\ell, m , j)\in \mathbb{Z}\times \mathcal{A}: \nu=2\ell+\frac{2m}{a}\}$ with respect to the counting measure.  Choose 
	$$
	\hat{F}(\nu,\ell, m , j)=\left\{\begin{array}{ll}       {\hat{f}(\ell, m , j )}, & {\text { if } \nu=2\ell+\frac{2m}{a},}\\{0}, &~ {\text {otherwise,} }\end{array} \right.
	$$ for some  measurable function $f$ on  $\mathbb{R}^n.$
	Then   for any $f\in L^2_{k,a}(\mathbb{R}^n)$, by the Plancherel formula, we have
	\begin{align}\label{eq10}\nonumber
		\|F\|_{L^{2}\left(\mathbb{T}, L^2_{k,a}(\mathbb{R}^n)\right)}&=\sqrt{2 \pi}\|\{\hat{F}(\nu,\ell, m , j)\}\|_{\ell^{2}\left(S\right)} \\
		&=\sqrt{2 \pi}\|\{\hat{f}(\ell, m , j )\}\|_{\ell^{2}\left(\mathcal{A}\right)} =\sqrt{2 \pi}\|f\|_{ L^2_{k,a}(\mathbb{R}^n)}.
	\end{align}
	Thus for $F \in L^{2}\left(\mathbb{T}, L^2_{k,a}(\mathbb{R}^n)\right)$, we get
	\begin{align}\label{eq11}\nonumber
		\mathcal{E}_S(\{\hat{F}(\nu,\ell, m , j)\})(t,x)&= \sum_{(\nu,\ell, m , j) \in S} \hat{F}(\nu,\ell, m , j)\Phi_{\ell,m,j}^{(a)}(x)e^{-i t \nu}\\\nonumber
		&= \sum_{(\ell, m , j ) \in \mathcal{A}} \hat{f}(\ell, m , j )\Phi_{\ell,m,j}^{(a)}(x)e^{-i t\left(2\ell+\frac{2m}{a}\right) } \\\nonumber
		&= \sum_{(\ell, m , j ) \in \mathcal{A}} 
		\langle f, \Phi_{\ell,m,j}^{(a)}\rangle  \Phi_{\ell,m,j}^{(a)}(x)e^{-i t\left(2\ell+\frac{2m}{a}\right) }\\\nonumber
		&= e^{i t\left(1+\frac{1}{a}(2\gamma+n-2)\right)} \ \sum_{(\ell, m , j ) \in \mathcal{A}}   e^{-i t\left(2\ell+1+\frac{1}{a}(2m+2\gamma+n-2)\right) } \langle f, \Phi_{\ell,m,j}^{(a)}\rangle  \Phi_{\ell,m,j}^{(a)}(x)\\
		&= e^{i t\left(1+\frac{1}{a}(2\gamma+n-2)\right)}e^{-i t\Delta_{k, a}} f(x).
	\end{align}
	Now once we assume that  Problem 2 holds, i.e.,  $\mathcal{E}_{S}$ is bounded from $\ell^{2}(S)$ to $L^{q'}\left(\mathbb{T}, L^{p'}_{k,a}(\mathbb{R}^n)\right)$  for some $1 \leq p, q \leq 2$, then from from (\ref{eq10}) and (\ref{eq11}),  the Strichartz inequality follows   as
	$$
	\begin{aligned}
		\left\|e^{-i t\Delta_{k, a}} f\right\|_{L^{q'}\left(\mathbb{T}, L^{p'}_{k,a}(\mathbb{R}^n)\right)} &=\left\|\ 	\mathcal{E}_S(\{\hat{F}(\nu,\ell, m , j)\})\right\|_{L^{q'}\left(\mathbb{T}, L^{p'}_{k,a}(\mathbb{R}^n)\right)} \\
		& \leq C\|\{\hat{F}(\nu,\ell, m , j)\}\|_{\ell^{2}(S)} \\
		&=C\sqrt{2 \pi}\|f\|_{ L^2_{k,a}(\mathbb{R}^n)}.
	\end{aligned}
	$$
	Thus from the above, we can conclude that the Strichartz inequality for the solution of   (\ref{100}) associated with the $(k, a)$-generalized Laguerre  operator holds if only if the restriction theorem holds on the particular  surface $S=\{(\nu,\ell, m , j)\in \mathbb{Z}\times\mathcal{A} : \nu=2\ell+\frac{2m}{a}\}$.

	By Theorem \ref{single}, we have the following restriction estimates.
	\begin{theorem}[Restriction theorem for a single function]\label{Restriction-single}
		Let $S=\{(\nu,\ell, m , j)\in \mathbb{Z}\times \mathcal{A}: \nu=2\ell+\frac{2m}{a}\}$ be the discrete surface.  Then  under the same hypotheses as in Theorem \ref{single}, we have
		$$ \|	\mathcal{E}_S(\{\hat{F}(\nu,\ell, m , j)\})\|_{L^{q}\left(( -\frac{\pi }{2}, \frac{\pi }{2}), L^{p}_{k,a}(\mathbb{R}^n)\right) }\leq C \|	 \{\hat{F}(\nu,\ell, m , j)\})\|_{\ell^2(S)},$$
		with $C>0$ is a constant independent of $\{\hat{F}(\nu,\ell, m , j)\}_{(\nu,\ell, m , j)\in S}$.
	\end{theorem}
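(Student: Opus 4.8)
The plan is to recognize that on the specific surface $S=\{(\nu,\ell,m,j):\nu=2\ell+\tfrac{2m}{a}\}$ the extension operator $\mathcal{E}_S$ coincides, up to a unimodular scalar, with the Schr\"odinger propagator $e^{-it\Delta_{k,a}}$; once this identification is made, the asserted restriction bound is just a repackaging of the single-function Strichartz estimate of Theorem \ref{single}. So the proof is essentially the unwinding already carried out in \eqref{eq10}--\eqref{eq11}, now read for an arbitrary $\ell^2(S)$ datum rather than only for one coming from a prescribed $f$.

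First I would observe that the assignment $(\ell,m,j)\mapsto(2\ell+\tfrac{2m}{a},\ell,m,j)$ is a bijection from $\mathcal{A}$ onto $S$, since the first coordinate $\nu$ is determined by $(\ell,m)$. Hence any sequence $\{\hat F(\nu,\ell,m,j)\}_{(\nu,\ell,m,j)\in S}\in\ell^2(S)$ may be reindexed by $\mathcal{A}$ and written as $\hat f(\ell,m,j):=\hat F(2\ell+\tfrac{2m}{a},\ell,m,j)$. By completeness of the orthonormal basis $\{\Phi_{\ell,m,j}^{(a)}\}$ in $L^2_{k,a}(\mathbb{R}^n)$, the function
$$
f(x):=\sum_{(\ell,m,j)\in\mathcal{A}}\hat f(\ell,m,j)\,\Phi_{\ell,m,j}^{(a)}(x)
$$
is a well-defined element of $L^2_{k,a}(\mathbb{R}^n)$, and the Plancherel formula yields
$$
\|f\|_{L^2_{k,a}(\mathbb{R}^n)}=\|\{\hat f(\ell,m,j)\}\|_{\ell^2(\mathcal{A})}=\|\{\hat F(\nu,\ell,m,j)\}\|_{\ell^2(S)}.
$$

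Next, repeating verbatim the computation that produced \eqref{eq11}, I would expand the definition \eqref{extension} of $\mathcal{E}_S$ on $S$ and factor out the phase coming from the eigenvalue shift, obtaining
$$
\mathcal{E}_S(\{\hat F(\nu,\ell,m,j)\})(t,x)=e^{\,it\left(1+\frac{1}{a}(2\gamma+n-2)\right)}\,e^{-it\Delta_{k,a}}f(x).
$$
Since the prefactor has modulus one and does not depend on $x$, for each fixed $t$ the spatial norms agree, $\|\mathcal{E}_S(\{\hat F\})(t,\cdot)\|_{L^p_{k,a}(\mathbb{R}^n)}=\|e^{-it\Delta_{k,a}}f\|_{L^p_{k,a}(\mathbb{R}^n)}$, and therefore the mixed space--time norms over $(-\tfrac{\pi}{2},\tfrac{\pi}{2})$ coincide as well.

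Finally I would invoke Theorem \ref{single} for the admissible pair $(p,q)$ to close the estimate:
$$
\|\mathcal{E}_S(\{\hat F\})\|_{L^q((-\frac{\pi}{2},\frac{\pi}{2}),L^p_{k,a}(\mathbb{R}^n))}
=\|e^{-it\Delta_{k,a}}f\|_{L^q((-\frac{\pi}{2},\frac{\pi}{2}),L^p_{k,a}(\mathbb{R}^n))}
\leq C\|f\|_{L^2_{k,a}(\mathbb{R}^n)}=C\|\{\hat F(\nu,\ell,m,j)\}\|_{\ell^2(S)},
$$
which is the claimed bound with the same constant $C$ as in Theorem \ref{single}, manifestly independent of the datum. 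I do not expect any genuine obstacle here: the entire analytic content (dispersive decay, the admissibility range, and the kernel estimates of Proposition \ref{eq200}) is already absorbed into Theorem \ref{single}. The only points requiring care are bookkeeping ones, namely that every $\ell^2(S)$ sequence indeed arises from a bona fide $L^2_{k,a}$ function through the bijection $S\leftrightarrow\mathcal{A}$ and the Plancherel identity, and that the restriction of the $\pi$-periodic profile to $(-\tfrac{\pi}{2},\tfrac{\pi}{2})$ in Theorem \ref{single} matches the time interval appearing in the statement.
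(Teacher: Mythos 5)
Your proposal is correct and follows essentially the same route as the paper: the text preceding the theorem carries out exactly the computation in \eqref{eq10}--\eqref{eq11} identifying $\mathcal{E}_S$ on this surface with $e^{it(1+\frac{1}{a}(2\gamma+n-2))}e^{-it\Delta_{k,a}}$, and the theorem is then stated as an immediate consequence of Theorem \ref{single}. Your added bookkeeping (the bijection $S\leftrightarrow\mathcal{A}$ and the Plancherel identity showing every $\ell^2(S)$ datum arises from an $L^2_{k,a}$ function) is exactly the implicit content of the paper's argument, made explicit.
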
  
	
	\section{The Schatten boundedness of $\mathcal{T}_S$}\label{s4}
	\subsection{The complex interpolation method and the duality principle} In order to generalize the restriction and Strichartz estimates involving a single function to the system of orthonormal functions, we need to introduce the complex interpolation method and the duality principle lemma in our context. We refer to Proposition 1 and Lemma 3 of \cite{FS} with appropriate modifications to obtain the following two results:
	\begin{proposition}\label{CH2prop1}
		Let $\{T_{z}\}$ be an analytic family of operators on $\mathbb{T}\times \mathbb{R}^n $ in the sense of Stein defined on the strip $-\lambda_0\leq  \operatorname{Re}(z)\leq 0$
		for some $\lambda_0 > 1$. Assume that we have the following bounds
		\begin{align}\label{CH2expo} 
			\begin{cases}
				\left\|T_{i s}\right\|_{L^{2}\left(\mathbb{T}, L^2_{k,a}(\mathbb{R}^n)\right) \rightarrow  L^{2}\left(\mathbb{T}, L^2_{k,a}(\mathbb{R}^n)\right) } \leq M_{0} e^{a|s|},\\\left\|T_{-\lambda_{0}+i s}\right\|_{L^{1}\left(\mathbb{T}, L^1_{k,a}(\mathbb{R}^n)\right) \rightarrow  L^{\infty}\left(\mathbb{T}, L^\infty_{k,a}(\mathbb{R}^n)\right) } \leq M_{1} e^{b|s|},
			\end{cases}	
		\end{align}
		for all $ s \in \mathbb{R}$, for some $a, b ,M_{0}, M_{1} \geq 0 $. Then, for all  $W_{1}, W_{2} \in L^{2 \lambda_{0}}\left(\mathbb{T}\times \mathbb{R}^n\right)$,
		the operator $W_{1} T_{-1} W_{2}$ belongs to ${\mathcal{G}^{2 \lambda_{0}}\left(L^{2}\left(\mathbb{T}, L^2_{k,a}(\mathbb{R}^n)\right)  \right)}$ and we have the estimate
		\begin{align}\label{CH2510}\nonumber
			&	\left\|W_{1} T_{-1} W_{2}\right\|_{\mathcal{G}^{2 \lambda_{0}}\left(L^{2}\left(\mathbb{T}, L^2_{k,a}(\mathbb{R}^n)\right)  \right)}\\& \leq M_{0}^{1-\frac{1}{\lambda_{0}}} M_{1}^{\frac{1}{\lambda_{0}}}\left\|W_{1}\right\| _{L^{2\lambda_0}\left(\mathbb{T}, L^{2\lambda_0}_{k,a}(\mathbb{R}^n)\right)  } \left\|W_{2 }\right\| _{L^{2\lambda_0}\left(\mathbb{T}, L^{2\lambda_0}_{k,a}(\mathbb{R}^n)\right)  }.
		\end{align}
		
	\end{proposition}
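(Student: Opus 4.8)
The plan is to follow the complex-interpolation scheme of Frank--Sabin (Proposition 1 of \cite{FS}), transplanted to the mixed-norm Hilbert space $L^{2}(\mathbb{T}, L^2_{k,a}(\mathbb{R}^n))$, and to interpolate a Hilbert--Schmidt bound against an operator-norm bound inside the Schatten scale. First I would reparametrize the strip: the substitution $z=-\lambda_0 w$ carries $-\lambda_0\le\operatorname{Re}(z)\le0$ onto the standard strip $0\le\operatorname{Re}(w)\le1$, sending $\operatorname{Re}(z)=0$ to $\operatorname{Re}(w)=0$, sending $\operatorname{Re}(z)=-\lambda_0$ to $\operatorname{Re}(w)=1$, and sending the target point $z=-1$ to $w=\theta:=1/\lambda_0$. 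The exponents are then forced: interpolating $\mathcal{G}^{\infty}$ at $\operatorname{Re}(w)=0$ with $\mathcal{G}^{2}$ at $\operatorname{Re}(w)=1$ gives $\tfrac{1}{p_\theta}=\tfrac{\theta}{2}=\tfrac{1}{2\lambda_0}$, i.e.\ $p_\theta=2\lambda_0$.

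Next I would build the analytic family. Writing the polar decompositions $W_j=|W_j|\,u_j$ of the weight functions (so $|u_j|=1$ where $W_j\neq0$), I set
\[
A_w:=|W_1|^{\lambda_0 w}\,u_1\, T_{-\lambda_0 w}\, u_2\,|W_2|^{\lambda_0 w},
\]
where $|W_j|^{\lambda_0 w}$ denotes the associated multiplication operator. By construction $A_\theta=|W_1|\,u_1\,T_{-1}\,u_2\,|W_2|=W_1 T_{-1}W_2$, and $\{A_w\}$ is analytic in the sense of Stein recalled above, since $\{T_z\}$ is and $w\mapsto|W_j|^{\lambda_0 w}$ is an analytic family of multiplication operators (one first takes $W_j$ bounded of finite measure support so that these act on simple functions, and recovers the general case by density at the end).

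Then come the two endpoint estimates. On $\operatorname{Re}(w)=0$ the factors $|W_j|^{\lambda_0 w}$ are multiplications by functions of modulus $|W_j|^{0}=1$, hence unitary, so $\|A_{is}\|_{\mathcal{G}^{\infty}}=\|A_{is}\|_{\mathrm{op}}\le\|T_{-i\lambda_0 s}\|_{L^{2}\to L^{2}}\le M_0\,e^{a\lambda_0|s|}$ by the first hypothesis in \eqref{CH2expo}. On $\operatorname{Re}(w)=1$ the operator $T_{-\lambda_0-i\lambda_0 s}$ maps $L^{1}$ to $L^{\infty}$, hence has an integral kernel $K_{1+is}(X,Y)$ with respect to $d\mu=dt\,v_{k,a}(x)dx$ satisfying $\|K_{1+is}\|_{L^{\infty}_{X,Y}}\le M_1 e^{b\lambda_0|s|}$; since the outer factors now have moduli $|W_1|^{\lambda_0}$ and $|W_2|^{\lambda_0}$, a direct computation of the Hilbert--Schmidt norm gives
\[
\|A_{1+is}\|_{\mathcal{G}^{2}}^{2}=\iint|W_1(X)|^{2\lambda_0}\,|K_{1+is}(X,Y)|^{2}\,|W_2(Y)|^{2\lambda_0}\,d\mu(X)\,d\mu(Y)\le\big(M_1 e^{b\lambda_0|s|}\big)^{2}\,\|W_1\|_{L^{2\lambda_0}}^{2\lambda_0}\,\|W_2\|_{L^{2\lambda_0}}^{2\lambda_0}.
\]
Applying Stein's complex interpolation theorem for analytic families valued in the Schatten scale \cite{stein} at $w=\theta=1/\lambda_0$, and raising the two endpoint bounds to the powers $1-\theta$ and $\theta$, produces exactly $M_0^{1-1/\lambda_0}M_1^{1/\lambda_0}\|W_1\|_{L^{2\lambda_0}}\|W_2\|_{L^{2\lambda_0}}$, which is \eqref{CH2510}.

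The main obstacle will be the endpoint at $\operatorname{Re}(w)=1$: one must genuinely pass from the $L^{1}\to L^{\infty}$ operator bound to a bounded kernel on the weighted product space and then verify that the $L^{2\lambda_0}$-integrability of $W_1,W_2$ is precisely what renders the weighted kernel square-integrable, with the stated norm and no loss of constant. A secondary technical point is the admissibility needed for Stein interpolation, namely analyticity together with the permitted growth in $s$ of $\|A_{is}\|$ and $\|A_{1+is}\|$; this is supplied by condition (3) in the definition of an analytic family in the sense of Stein, since the single-exponential factors $e^{a\lambda_0|s|}$ and $e^{b\lambda_0|s|}$ stay within the allowed (at most double-exponential) growth. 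The final density step removing the boundedness assumption on $W_1,W_2$ is routine once the estimate is established for bounded weights.
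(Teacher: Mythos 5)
Your argument is correct and is essentially the paper's own route: the paper proves this proposition by appeal to Proposition 1 of Frank--Sabin with ``appropriate modifications,'' and what you have written out --- the analytic family $|W_1|^{\lambda_0 w}u_1T_{-\lambda_0 w}u_2|W_2|^{\lambda_0 w}$, the operator-norm bound on $\operatorname{Re}(w)=0$, the Hilbert--Schmidt bound via the $L^1\to L^\infty$ kernel on $\operatorname{Re}(w)=1$, and Stein interpolation in the Schatten scale at $\theta=1/\lambda_0$ --- is exactly that argument transplanted to $L^2(\mathbb{T},L^2_{k,a}(\mathbb{R}^n))$. The exponent bookkeeping and the final constant $M_0^{1-1/\lambda_0}M_1^{1/\lambda_0}\|W_1\|_{L^{2\lambda_0}}\|W_2\|_{L^{2\lambda_0}}$ both check out.
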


	\begin{lemma}[Duality principle]\label{duality-principle} Let $\lambda\geq1$ and $S\subset\mathbb{Z}\times\mathcal{A}$ be a discrete surface. Assume that $A$ is a bounded linear operator from $\ell^2(S)$ to $L^{q^\prime}(\mathbb{T}, L_{k,a}^{p^\prime}(\mathbb{R}^n))$ for some $p,q\geq 1$. Then the following statements are equivalent.
		
		$(1)$ There is a constant $C>0$ such that for all $W\in L^\frac{2q}{2-q}(\mathbb{T}, L_{k,a}^\frac{2p}{2-p}(\mathbb{R}^n))$
		\begin{equation}\label{Schatten}
			\big\|WAA^*\overline{W}\big\|_{\mathcal{G}^\lambda(L^2(\mathbb{T}, L_{k,a}^2(\mathbb{R}^n)))}\leq C\big\|W\big\|_{L^\frac{2q}{2-q}(\mathbb{T}, L_{k,a}^\frac{2p}{2-p}(\mathbb{R}^n))}^2,
		\end{equation}
		where the function $W$ is interpreted as an operator which acts by multiplication.
		
		$(2)$ There is a constant $C'>0$ such that for any orthonormal system $\left\{f_\iota\right\}_{\iota\in{I}}$ in $L^2_{k,a}(\mathbb{R}^n)$ and any sequence $\{n_\iota\}_{\iota\in{I}}$ in $\mathbb{C}$
		\begin{equation*}
			\bigg\|\sum_{\iota\in I}n_\iota|Af_\iota|^2\bigg\|_{L^{\frac{q^\prime}{2}}(\mathbb{T}, L_{k,a}^{\frac{p^\prime}{2}}(\mathbb{R}^n))}\leq C'\bigg(\sum_{\iota\in I}|n_\iota|^{\lambda^\prime}\bigg)^{\frac{1}{\lambda^\prime}}.
		\end{equation*}		
	\end{lemma}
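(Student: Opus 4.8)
The plan is to establish the equivalence as a duality statement in the spirit of Frank--Sabin \cite{FS}, resting on two pillars: the trace duality of Schatten classes and the duality of the mixed Lebesgue norms, linked through the spectral decomposition of a density operator. Throughout I abbreviate $T:=WA$, viewed as a bounded operator from $\ell^2(S)$ into $L^2(\mathbb{T}, L^2_{k,a}(\mathbb{R}^n))$. The exponent relations $\frac{1}{q}=\frac{1}{2}+\frac{2-q}{2q}$ and $\frac{1}{2}=\frac{1}{q'}+\frac{2-q}{2q}$ (and their $p$-analogues) show precisely that multiplication by a function in $L^{\frac{2q}{2-q}}(\mathbb{T}, L^{\frac{2p}{2-p}}_{k,a}(\mathbb{R}^n))$, resp.\ its conjugate, carries $L^{q'}(\mathbb{T}, L^{p'}_{k,a})$ into $L^{2}(\mathbb{T}, L^{2}_{k,a})$ and $L^{2}$ into $L^{q}(\mathbb{T}, L^{p}_{k,a})$; hence $TT^{*}=WAA^{*}\overline{W}$ is a well-defined operator on $L^{2}(\mathbb{T}, L^{2}_{k,a})$, while $T^{*}T=A^{*}|W|^{2}A$ acts on $\ell^{2}(S)$. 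Since $T^{*}T$ and $TT^{*}$ share the same nonzero singular values, $\|WAA^{*}\overline{W}\|_{\mathcal{G}^{\lambda}}=\|T^{*}T\|_{\mathcal{G}^{\lambda}}$.

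For the implication $(2)\Rightarrow(1)$ I would invoke the trace duality $\|T^{*}T\|_{\mathcal{G}^{\lambda}}=\sup\{|\mathrm{Tr}(\gamma\,T^{*}T)|:\|\gamma\|_{\mathcal{G}^{\lambda'}}\leq 1\}$, where $\frac{1}{\lambda}+\frac{1}{\lambda'}=1$. Because $T^{*}T\geq 0$ is self-adjoint, the supremum is attained along self-adjoint $\gamma$, which I diagonalize as $\gamma=\sum_{\iota} n_{\iota}\,|f_{\iota}\rangle\langle f_{\iota}|$ with $\{f_{\iota}\}$ orthonormal in $\ell^{2}(S)$ and $\|\gamma\|_{\mathcal{G}^{\lambda'}}=\big(\sum_{\iota}|n_{\iota}|^{\lambda'}\big)^{1/\lambda'}$. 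The decisive computation is the identity
\begin{equation*}
\mathrm{Tr}(\gamma\,T^{*}T)=\sum_{\iota} n_{\iota}\,\|WAf_{\iota}\|_{L^{2}}^{2}=\int_{\mathbb{T}}\int_{\mathbb{R}^{n}}|W|^{2}\Big(\sum_{\iota} n_{\iota}\,|Af_{\iota}|^{2}\Big)\,v_{k,a}(x)\,dx\,dt .
\end{equation*}
Hölder's inequality in the mixed norm then applies, the point being that $\frac{2q}{2-q}$ and $\frac{2p}{2-p}$ are chosen so that $\||W|^{2}\|_{L^{(q'/2)'}(L^{(p'/2)'})}=\|W\|_{L^{2q/(2-q)}(L^{2p/(2-p)})}^{2}$; combined with hypothesis (2) this bounds the right-hand side by $C'\|W\|^{2}\big(\sum_{\iota}|n_{\iota}|^{\lambda'}\big)^{1/\lambda'}$, and taking the supremum over $\gamma$ yields (1).

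Conversely, for $(1)\Rightarrow(2)$ I would first reduce to non-negative coefficients: since each $|Af_{\iota}|^{2}\geq 0$, the pointwise bound $\big|\sum_{\iota} n_{\iota}|Af_{\iota}|^{2}\big|\leq \sum_{\iota}|n_{\iota}|\,|Af_{\iota}|^{2}$ shows it suffices to treat $n_{\iota}\geq 0$, in which case $h:=\sum_{\iota} n_{\iota}|Af_{\iota}|^{2}\geq 0$ and $\gamma:=\sum_{\iota} n_{\iota}|f_{\iota}\rangle\langle f_{\iota}|\geq 0$. For such $h$ the norm $\|h\|_{L^{q'/2}(L^{p'/2})}$ may be computed by testing against non-negative functions, which are exactly the functions $|W|^{2}$ with $W\in L^{\frac{2q}{2-q}}(L^{\frac{2p}{2-p}})$; running the displayed identity backwards and applying the Schatten--Hölder inequality $|\mathrm{Tr}(\gamma\,T^{*}T)|\leq \|\gamma\|_{\mathcal{G}^{\lambda'}}\|T^{*}T\|_{\mathcal{G}^{\lambda}}$ followed by hypothesis (1) produces (2).

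I expect the main obstacle to lie not in the algebra but in the functional-analytic hygiene: justifying the trace duality and the attainment of the supremum requires $T^{*}T$ to be compact, indeed to lie in $\mathcal{G}^{\lambda}$, so I would first prove the estimates for finite orthonormal systems and for bounded, compactly supported $W$, and then pass to the general (possibly infinite) case by a density and monotone-approximation argument. The one genuinely delicate point, flagged above, is that the multiplier duality only supplies non-negative test functions $|W|^{2}$; the pointwise triangle inequality is precisely what repairs this, allowing the non-negative-coefficient estimate to control arbitrary complex families $\{n_{\iota}\}$.
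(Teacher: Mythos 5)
Your argument is correct and is essentially the proof the paper relies on: the paper gives no proof of this lemma itself but defers to Lemma~3 of \cite{FS}, and your trace-duality computation $\mathrm{Tr}(\gamma\,T^{*}T)=\int |W|^{2}\sum_{\iota}n_{\iota}|Af_{\iota}|^{2}$, combined with Schatten--H\"older and mixed-norm H\"older for the exponents $\frac{2q}{2-q},\frac{2p}{2-p}$, is exactly that argument transplanted to $L^{q'}(\mathbb{T},L^{p'}_{k,a}(\mathbb{R}^n))$. Note only that the orthonormal system in statement $(2)$ must live in $\ell^{2}(S)$, the domain of $A$, as you correctly assume; the paper's statement of $(2)$ contains a typo on this point.
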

	\begin{remark}
		Proposition \ref{CH2prop1} and Lemma  \ref{duality-principle} is also valid in the domain $(-\frac{\pi}{2},\frac{\pi}{2})\times\mathbb{R}^n$.
	\end{remark}

	\subsection{On general discrete surface}In this subsection we establish the Schatten boundedness of $\mathcal{T}_S$ on general discrete surface. Let $S=\{ (\nu,\ell, m , j)\in \mathbb{Z}\times \mathcal{A}: R(\ell, m , j , \nu)=0\}$ be a discrete surface,  where $R(\nu,\ell, m , j)$ is a polynomial of degree one,  with respect to the counting measure.
	
	For some $\lambda_0>1$ and $-\lambda_0\leq  \operatorname{Re}(z)\leq 0$, consider the analytic family of generalized functions
	\begin{align}\label{CH2gf}
		G_z(\nu,\ell, m , j)=\psi(z)R(\nu,\ell, m , j)_+^z,
	\end{align}
	where $\psi(z)$ is an appropriate analytic function with a simple zero at $z=-1$ with  exponential growth at infinity when $\operatorname{Re}(z)=0 $ and 
	\begin{align}\label{plus}
		R(\nu,\ell, m , j)_+^z=\begin{cases}
			R(\nu,\ell, m , j)^z&\text{ for }  R(\nu,\ell, m , j)>0,\\0&\text{ for }R(\nu,\ell, m , j)\leq0.
	\end{cases} 	\end{align}
	Restricting the Schwartz class function $\Phi$ on $\mathbb{Z}\times \mathcal{A},$ we have $$	\left\langle G_{z}, \Phi\right\rangle:=\psi(z) \sum_{r\in \mathbb{Z}}r_+^z \sum_{\{(\nu,\ell, m , j):R(\nu,\ell, m , j)=r \}}\Phi(\nu,\ell, m , j) ,$$ where $r_+^z$ is defined as in (\ref{plus}). Using one dimensional analysis of $r_+^z$ (see \cite{sh}), we have
	\begin{align*}
		\displaystyle\lim_{z\to -1}\left\langle G_{z}, \Phi\right\rangle
		=\sum_{(\nu,\ell, m , j)\in S}\Phi(\nu,\ell, m , j), \end{align*}
	and this   ensures that $G_{-1}\equiv \delta_S.$
	
	For $-\lambda_0\leq  {\rm Re}(z)\leq 0$, define the analytic family of operators $T_z$ acting on Schwartz class functions on $\mathbb{T}\times \mathbb{R}^n$ by
	$$T_z F(t,x) =  \sum_{(\nu,\ell, m , j)\in \mathbb{Z}\times \mathcal{A}} \hat{F}(\nu,\ell, m , j) G_z(\nu,\ell, m , j) \Phi_{\ell,m,j}^{(a)}(x)e^{-i t \nu}.$$ 
	Then $\{T_z\}$  is an analytic in the sense of Stein defined on the strip $-\lambda_0\leq   {\rm Re}(z)\leq 0$ for some $\lambda_0>1.$ Moreover, we have the identity $\mathcal{T}_S=T_{-1}$ and
	\begin{align}
		&\quad T_z F( t,x) \notag\\&=  \sum_{(\nu,\ell, m , j)\in \mathbb{Z}\times \mathcal{A}} \hat{F}(\nu,\ell, m , j) G_z(\nu,\ell, m , j) \Phi_{\ell,m,j}^{(a)}(x)e^{-i t \nu}\notag\\
		&= \frac{1}{2 \pi}\sum_{(\nu,\ell, m , j)\in \mathbb{Z}\times \mathcal{A}} \left[  \int_{\mathbb{R}^{n}} \int_{\mathbb{T}} F(\tau,y) \Phi_{\ell,m,j}^{(a)}(y) e^{i \tau \nu}\, d t\tau \,v_{k, a}(y) dy\right]G_z(\nu,\ell, m , j) \Phi_{\ell,m,j}^{(a)}(x)e^{-i t \nu}\notag\\
		&= \frac{1}{2 \pi}   \int_{\mathbb{R}^{n}} \int_{\mathbb{T}}  \left[  \sum_{(\nu,\ell, m , j)\in \mathbb{Z}\times \mathcal{A}}  \Phi_{\ell,m,j}^{(a)}(y) \Phi_{\ell,m,j}^{(a)}(x)   G_z(\nu,\ell, m , j)  e^{-i  \nu(t-\tau)}\right]  F(\tau,y) \, d\tau \,v_{k, a}(y) dy \notag\\
		&= \int_{\mathbb{R}^n }  (K_z(\cdot,x, y)*F(\cdot,y))(t)   \,v_{k, a}(y) dy,\label{CH222-22}
	\end{align}
	where $$ K_z(t, x, y) = \frac{1}{2 \pi}     \sum_{(\nu,\ell, m , j)\in \mathbb{Z}\times \mathcal{A}}  \Phi_{\ell,m,j}^{(a)}(y) \Phi_{\ell,m,j}^{(a)}(x)   G_z(\nu,\ell, m , j)  e^{-i  \nu t}.$$  When $ {\rm Re}(z)=0$, we have
	\begin{align}\label{CH2twoo}
		\|T_{is} \|_{L^{2}\left(\mathbb{T}, L^2_{k,a}(\mathbb{R}^n)\right) \rightarrow  L^{2}\left(\mathbb{T}, L^2_{k,a}(\mathbb{R}^n)\right) } =\left\|G_{i s}\right\|_{\ell^{\infty}\left(\mathbb{Z}\times \mathcal{A} \right)}\leq \left|  \psi(is)\right|.	\end{align}
	Again an application of H\"older and Young inequalities in \eqref{CH222-22} gives \begin{eqnarray}\label{CH21inf}
		\left\|T_{z}\right\|_{L^{1}\left(\mathbb{T}, L^1_{k,a}(\mathbb{R}^n)\right) \rightarrow  L^{\infty}\left(\mathbb{T}, L^\infty_{k,a}(\mathbb{R}^n)\right) } \leq  \sup_{x,y\in \mathbb{R}^n, t\in \mathbb{T}}|K_z(t, x, y)|  .
	\end{eqnarray} 
	Using Proposition \ref{CH2prop1}, we   obtain the  following Schatten boundedness of the form (\ref{Schatten}).
	
	\begin{lemma}\label{CH2diagggg}  
		Let $n \geq 1$ and $S \subset \mathbb{Z}\times \mathcal{A}$ be a discrete  surface. Suppose that  for each $x,y\in \mathbb{R}$ and $ t\in \mathbb{T}$,    $|K_z(t,x,y)|$ is   uniformly bounded by a constant  $C(s) $ with   at most exponential growth in $s$  at  when  $z=-\lambda_{0}+is$ for some $\lambda_0>1$. Then  for all $W_1, W_2 \in L^{2\lambda_0}\left(\mathbb{T}, L^{2\lambda_0}_{k,a}(\mathbb{R}^n)\right) ,$ the operator $W_{1} \mathcal T_{S} W_{2}=W_{1}   T_{-1} W_{2}$
		belongs to $\mathcal{G}^{2 \lambda_{0}} (L^{2} (\mathbb{T}, L^2_{k,a}(\mathbb{R}^n) )    )$ and we have the estimate $$	\left\|W_{1} \mathcal T_{S} W_{2}\right\|_{\mathcal{G}^{2 \lambda_{0}}\left(L^{2}\left(\mathbb{T}, L^2_{k,a}(\mathbb{R}^n)\right)  \right)} \leq  C \left\|W_{1}\right\| _{L^{2\lambda_0}\left(\mathbb{T}, L^{2\lambda_0}_{k,a}(\mathbb{R}^n)\right)  } \left\|W_{2 }\right\| _{L^{2\lambda_0}\left(\mathbb{T}, L^{2\lambda_0}_{k,a}(\mathbb{R}^n)\right)  }.$$
	\end{lemma}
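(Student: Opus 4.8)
The plan is to apply Proposition \ref{CH2prop1} directly to the analytic family $\{T_z\}$ constructed in this subsection, for which we already know that $\mathcal{T}_S = T_{-1}$ and that it is analytic in the sense of Stein on the strip $-\lambda_0 \leq \operatorname{Re}(z) \leq 0$ for some $\lambda_0 > 1$. It therefore suffices to verify the two endpoint operator-norm bounds \eqref{CH2expo} demanded by that proposition; once both are in place, the conclusion—both the membership $W_1 \mathcal{T}_S W_2 \in \mathcal{G}^{2\lambda_0}(L^2(\mathbb{T}, L^2_{k,a}(\mathbb{R}^n)))$ and the quantitative Schatten estimate—follows immediately by specializing \eqref{CH2510} to the present situation.

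For the line $\operatorname{Re}(z) = 0$, I would invoke the identity \eqref{CH2twoo}, which expresses the $L^2 \to L^2$ operator norm of $T_{is}$ as the $\ell^\infty$ norm of the multiplier sequence $G_{is}$ and bounds it by $|\psi(is)|$. Since $\psi$ is chosen with at most exponential growth on the imaginary axis, this yields $\|T_{is}\|_{L^2 \to L^2} \leq M_0 e^{a|s|}$ for suitable constants $M_0, a \geq 0$, which is exactly the first bound in \eqref{CH2expo}. For the line $\operatorname{Re}(z) = -\lambda_0$, I would use the kernel estimate \eqref{CH21inf}, obtained from the integral representation \eqref{CH222-22} via H\"older's and Young's inequalities, which controls the $L^1 \to L^\infty$ operator norm of $T_z$ by the supremum of $|K_z(t,x,y)|$ over $x,y \in \mathbb{R}^n$ and $t \in \mathbb{T}$. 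By the standing hypothesis of the lemma, this supremum is bounded by $C(s)$ with at most exponential growth in $s$ when $z = -\lambda_0 + is$, giving $\|T_{-\lambda_0+is}\|_{L^1 \to L^\infty} \leq M_1 e^{b|s|}$, the second bound in \eqref{CH2expo}.

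With both endpoint bounds verified, Proposition \ref{CH2prop1} applies verbatim, identifying $M_0^{1-1/\lambda_0} M_1^{1/\lambda_0}$ (together with the exponential factors absorbed into the interpolation) as the constant $C$ in the asserted estimate. The only genuine content beyond bookkeeping is ensuring that the two endpoint bounds are stated in the form the interpolation proposition requires; the main obstacle does not lie in this lemma but in the separate verification that $|K_z(t,x,y)|$ really does satisfy the uniform bound with exponential growth on the line $\operatorname{Re}(z) = -\lambda_0$, which is here taken as a hypothesis and must be established for the concrete surface $S$ through a careful analysis of the kernel $\Lambda_{k,a}$ and the generalized function $G_z$.
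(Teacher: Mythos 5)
Your proposal is correct and follows essentially the same route as the paper: the paper likewise establishes the $L^2\to L^2$ bound on $\operatorname{Re}(z)=0$ via \eqref{CH2twoo}, controls the $L^1\to L^\infty$ norm on $\operatorname{Re}(z)=-\lambda_0$ by $\sup|K_z|$ via \eqref{CH21inf} together with the lemma's hypothesis, and then concludes by a direct application of Proposition \ref{CH2prop1}. Nothing essential is missing.
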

	\begin{remark}
		Lemma \ref{CH2diagggg} also holds true in the domain $(-\frac{\pi}{2},\frac{\pi}{2})\times\mathbb{R}^n$.
	\end{remark}
	
	\subsection{On the particular surface $S=\{(\nu, \ell, m , j)\in \mathbb{Z}\times \mathcal{A}: \nu=2\ell+\frac{2m}{a}\}$ }
	In this subsection we   work on this particular   discrete surface $S=\{(\nu, \ell, m , j)\in \mathbb{Z}\times \mathcal{A}: \nu=2\ell+\frac{2m}{a}\}$  with respect to the counting measure.   
	
	Setting $\psi(z)=\frac{1}{\Gamma(z+1)}$ and $ 	R(\nu, \ell, m , j)=\nu-2\ell-\frac{2m}{a}$ in (\ref{CH2gf}),  then we have
	\begin{align*}
		G_z(\nu, \ell, m , j)=\frac{1}{\Gamma(z+1)}\left(\nu-(2\ell+\frac{2m}{a})\right)_+^z
	\end{align*} and
	\begin{align*}		 
		\lim_{z\to -1}\left\langle G_{z}, \Phi\right\rangle&=\lim_{z\to -1}\frac{1}{\Gamma(z+1)}\sum_{r\in \mathbb{Z}}r_+^z \sum_{\{(\nu, \ell, m , j):\nu-(2\ell+\frac{2m}{a})=r\}}\Phi(\nu, \ell, m , j)\\
		&=\sum_{(\nu, \ell, m , j)\in S}\Phi(\nu, \ell, m , j).
	\end{align*}
	Thus $G_{-1}=\delta_S$ and
	\begin{align}\label{CH22-2}T_z F( t,x) 
		&= \int_{\mathbb{R}^n }  (K_z(\cdot, x, y)*F(\cdot, y))(t)   \,v_{k, a}(y) dy,
	\end{align}
	where \begin{align}\label{CH2kernl}\nonumber
		K_z(t, x, y)&
		=\frac{1}{2 \pi}     \sum_{(\nu, \ell, m , j)\in \mathbb{Z}\times \mathcal{A}}  \Phi_{\ell,m,j}^{(a)}(y) \Phi_{\ell,m,j}^{(a)}(x)   G_z(\nu, \ell, m , j) e^{-i  \nu t}\\\nonumber
		&=\frac{1}{2\pi \Gamma(z+1)}  \sum_{(\ell, m , j , \nu)\in \mathbb{Z}\times \mathcal{A}}  \Phi_{\ell,m,j}^{(a)}(y) \Phi_{\ell,m,j}^{(a)}(x)   \left(\nu-(2\ell+\frac{2m}{a})\right)_+^z e^{-i  \nu t}\\\nonumber
		&=\frac{e^{i t\left(1+\frac{1}{a}(2\gamma+n-2)\right)} }{2\pi \Gamma(z+1)}  \ \sum_{(\ell, m , j ) \in \mathcal{A}}   e^{-i t\left(2\ell+1+\frac{1}{a}(2m+2\gamma+n-2)\right) } \Phi_{\ell,m,j}^{(a)}(y) \Phi_{\ell,m,j}^{(a)}(x)\sum_{r=0 }^\infty r_+^z e^{-i rt }\\
		&=\frac{e^{i t\left(1+\frac{1}{a}(2\gamma+n-2)\right)} }{2\pi \Gamma(z+1)}  \Lambda_{k, a}(x, y ; t)  \sum_{r=0 }^\infty r_+^z e^{-i rt }, 
	\end{align}
	where the last equality comes from the spectral decomposition of $e^{-it\Delta_{k, a}}$. In order to obtain a uniformly estimate of the kernel $	K_z $, we need to estimate $ \displaystyle\sum_{r=0 }^\infty r_+^z e^{-i rt }$. 
	\begin{lemma}[\cite{shyam}]\label{CH2515}
		Let $-\lambda_0\leq \operatorname{Re}(z)\leq 0$ for some $\lambda_0>1$. Then the series
		$ \sum_{r=0 }^\infty r_+^z e^{-i rt }$ is the Fourier series of an integrable function on $[-\pi, \pi]$ which is of class $C^\infty $ on $[-\pi, \pi]\setminus  \{0\}.$ Near  the origin this function has the same singularity as the function whose values are $ \Gamma(z+1)(it)^{-z-1},$ i.e.,
		\begin{align}\label{CH2eq4} \sum_{r=0 }^\infty r_+^z e^{-i rt }\sim  \; \Gamma(z+1)(it)^{-z-1}+b(t),\end{align}where $b\in C^\infty[-\pi , \pi]$.  	Here the  notation $x\sim y$ stands for, there exists constants $C_1,~C_2>0$ such that $C_1 x\leq y\leq C_2x.$ 
	\end{lemma}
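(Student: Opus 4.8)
The plan is to recognize the series as a boundary value of the polylogarithm and to isolate its branch-point singularity at $t=0$ by hand. Since $r_+^z=0$ for $r\le 0$, we have $\sum_{r=0}^{\infty} r_+^z e^{-irt}=\sum_{r=1}^{\infty} r^z e^{-irt}$, which for $|e^{-it}|=1$ is the analytic continuation of $\mathrm{Li}_{-z}(e^{-it})$. I would first carry out the computation in the range $-1<\operatorname{Re}(z)<0$, where everything converges absolutely away from the origin, and recover the remaining values of $z$ in the strip $-\lambda_0\le\operatorname{Re}(z)\le 0$ (in particular the boundary $\operatorname{Re}(z)=0$ and the part $\operatorname{Re}(z)<-1$) at the very end by analytic continuation in $z$.

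For $-1<\operatorname{Re}(z)<0$ I would insert the Gamma integral $r^z=\frac{1}{\Gamma(-z)}\int_0^\infty \tau^{-z-1}e^{-r\tau}\,d\tau$ into the series. For fixed $t\neq0$ the resulting double sum-integral converges absolutely, so Fubini applies and summing the geometric series gives
$$
\sum_{r=1}^{\infty} r^z e^{-irt}=\frac{1}{\Gamma(-z)}\int_0^\infty \frac{\tau^{-z-1}}{e^{\tau+it}-1}\,d\tau .
$$
The singularity at $t=0$ comes entirely from the simple pole of $1/(e^w-1)$ at $w=0$. Writing $\frac{1}{e^{w}-1}=\frac{1}{w}+\rho(w)$ with $\rho$ holomorphic on $|w|<2\pi$ and splitting the $\tau$-integral at $\tau=1$, the pieces $\int_1^\infty(\cdots)\,d\tau$ and $\int_0^1 \tau^{-z-1}\rho(\tau+it)\,d\tau$ are holomorphic in $t$ near the origin (one may differentiate under the integral sign, using the exponential decay of $1/(e^{\tau+it}-1)$ as $\tau\to\infty$), so they feed into the smooth remainder $b(t)$.

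The leading term is $\frac{1}{\Gamma(-z)}\int_0^\infty \frac{\tau^{-z-1}}{\tau+it}\,d\tau$. Scaling $\tau=|t|u$ and invoking the classical evaluation $\int_0^\infty \frac{u^{s-1}}{u+a}\,du=\frac{\pi}{\sin(\pi s)}\,a^{s-1}$, valid for $0<\operatorname{Re}(s)<1$ and $a\notin(-\infty,0]$, with $s=-z$ and $a=i\operatorname{sgn}(t)$, this integral equals $\frac{\pi}{\sin(-\pi z)}(it)^{-z-1}$ (both signs of $t$ producing the principal branch of $(it)^{-z-1}$). The reflection formula $\Gamma(-z)\Gamma(1+z)=\pi/\sin(-\pi z)$ then collapses the prefactor to exactly $\Gamma(z+1)$, giving the asserted leading singularity $\Gamma(z+1)(it)^{-z-1}$.

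It remains to confirm that $b(t)\in C^\infty$ away from the origin and to pass to all $z$ in the strip. Away from $t=0$ I would argue directly on the series: there $|1-e^{-it}|$ is bounded below, so repeated summation by parts against the uniformly bounded partial sums of $\sum e^{-irt}$ lowers the exponent of $r$ by one at each step, and after finitely many steps the series and each of its $t$-derivatives converge absolutely and uniformly, yielding smoothness on $[-\pi,\pi]\setminus\{0\}$. The main obstacle is the final extension: for $\operatorname{Re}(z)=0$ the series converges only distributionally and for $\operatorname{Re}(z)<-1$ the scalar Beta integral above diverges, so the integral computation cannot be invoked verbatim. I would instead observe that, for fixed $t\neq0$, the summation-by-parts representation exhibits $g(t)$ as a holomorphic function of $z$ on the whole strip, while $\Gamma(z+1)(it)^{-z-1}$ is holomorphic as well; hence the decomposition established on $-1<\operatorname{Re}(z)<0$ propagates by analytic continuation in $z$, with the regular part given by the convergent series $\sum_{k\ge0}\frac{\zeta(-z-k)}{k!}(-it)^k$, which is manifestly smooth near $t=0$. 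Propagating the identity across the boundary line while preserving smoothness and the control on the $z$-dependence is the delicate point, and is where I expect the bulk of the technical care to go.
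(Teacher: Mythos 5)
The paper contains no proof of this lemma: it is quoted verbatim from \cite{shyam}, which in turn rests on the classical analysis of the distributions $x_+^z$ in Gelfand--Shilov \cite{sh}. So there is nothing in-paper to compare against line by line. Judged on its own, your argument is the standard derivation of this expansion: the Gamma-integral representation $r^z=\frac{1}{\Gamma(-z)}\int_0^\infty\tau^{-z-1}e^{-r\tau}\,d\tau$, extraction of the simple pole of $1/(e^{w}-1)$ at $w=0$, and the Beta integral combined with the reflection formula to produce $\Gamma(z+1)(it)^{-z-1}$. The computation on the substrip $-1<\operatorname{Re}(z)<0$ is correct, and your identification of the regular part with $\sum_{k\ge0}\zeta(-z-k)(-it)^k/k!$ is exactly the classical Hurwitz-type expansion of $\mathrm{Li}_{-z}(e^{-it})$, convergent for $|t|<2\pi$ by the functional equation for $\zeta$.

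Two points need repair. First, your Fubini justification is false as stated: the absolute double sum-integral equals $\Gamma(-\operatorname{Re}(z))\sum_{r\ge1}r^{\operatorname{Re}(z)}$, which diverges throughout $-1<\operatorname{Re}(z)<0$. The interchange must be done with an Abel factor $e^{-\epsilon r}$ (or via partial sums and summation by parts), letting $\epsilon\to0^+$ at the end; for fixed $t\neq0$ this limit is harmless, but it has to be carried out. Second, the continuation to the full strip, which you correctly flag as the delicate step, is not a pure formality: both sides of your identity are holomorphic in $z$ only away from $z\in\{-1,-2,\dots\}$, where the pole of $\Gamma(z+1)$ cancels against the pole of the term $\zeta(-z-k)$ with $k=-z-1$ and the true singularity of the sum degenerates to a logarithm (e.g.\ $z=-1$ gives $-\log(1-e^{-it})$); likewise on the line $\operatorname{Re}(z)=0$ the asserted singularity is of order $|t|^{-1}$, so the ``integrable function'' clause cannot hold there literally. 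These are really defects of the quoted statement rather than of your proof, and they are immaterial for the paper's application, where the lemma is used only after division by $\Gamma(z+1)$ (note the choice $\psi(z)=1/\Gamma(z+1)$ in \eqref{CH2gf}, which renders the relevant quantity entire in $z$) and only on lines $\operatorname{Re}(z)=-\lambda$ with $\lambda$ near $\lambda_0$. With the integer points excluded, the identity-theorem argument you sketch --- both sides holomorphic in $z$ on the slit strip for fixed $t\neq0$ and agreeing on an open substrip --- does legitimately close the proof.
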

	When $\operatorname{Re}(z)=0$, as (\ref{CH2twoo}), we have
	\begin{align}\label{CH2two} \nonumber
		\|T_{is} \|_{L^{2}\left((-\frac{\pi}{2},\frac{\pi}{2}), L^2_{k,a}(\mathbb{R}^n)\right) \rightarrow  L^{2}\left((-\frac{\pi}{2},\frac{\pi}{2}), L^2_{k,a}(\mathbb{R}^n)\right) }& =\left\|G_{i s}\right\|_{\ell^{\infty}\left(\mathbb{Z}\times \mathcal{A} \right)} \\&\leq |\psi(is)|= \left|  \frac{1}{\Gamma(1+i s)} \right|\leq C e^{\pi|s| / 2}.	\end{align}
	When $z=-\lambda_{0}+is$, (\ref{CH21inf}) gives $T_z$  is bounded from  	$L^{1}\left((-\frac{\pi}{2},\frac{\pi}{2}), L^1_{k,a}(\mathbb{R}^n)\right)$ to $ L^{\infty}\left((-\frac{\pi}{2},\frac{\pi}{2}), L^\infty_{k,a}(\mathbb{R}^n)\right) $  if   $|K_z(t,x,y)|$ is uniformly bounded for each  $x, y\in\mathbb{R}^n$ and $t\in (-\frac{\pi}{2},\frac{\pi}{2})$. 
	When $a=1,2$, from  (\ref{CH2kernl}), Proposition \ref{CH2515}, and  Proposition \ref{eq200}, we get
	\begin{align}\label{CH2kernel}
		|K_z(t,x,y)|&\sim\frac{1}{ |t|^{\operatorname{Re}(z+1+\frac{2\gamma+n+a-2}{a})}}, ~\forall x,y\in\mathbb{R}^n, t\in(-\frac{\pi}{2},\frac{\pi}{2}).  
	\end{align}
	Therefore, for every $x,y\in \mathbb{R}$ and $t\in(-\frac{\pi}{2},\frac{\pi}{2})$,  $|K_z(t,x,y)|$ is uniformly bounded if $\operatorname{Re}(z)=-1-\frac{2\gamma+n+a-2}{a}.        $   Thus choosing $\lambda_0=1+\frac{2\gamma+n+a-2}{a}$, $T_z$  is bounded from  	$L^{1}\left((-\frac{\pi}{2},\frac{\pi}{2}), L^1_{k,a}(\mathbb{R}^n)\right)$ to $ L^{\infty}\left((-\frac{\pi}{2},\frac{\pi}{2}), L^\infty_{k,a}(\mathbb{R}^n)\right) $.
	
	Now the Schatten boundedness comes out from Lemma \ref{CH2diagggg}, 
	\begin{lemma}\label{as}  
		Suppose $a=1,2$ and $k$ is a non-negative multiplicity function such that
		$$
		a+2 \gamma+n-2>0.
		$$
		Let $n \geq 1$ and $S=\{(\nu, \ell, m , j)\in \mathbb{Z}\times \mathcal{A}: \nu=2\ell+\frac{2m}{a}\}   $ be a discrete  surface  on $\mathbb{Z}\times \mathcal{A}$ and $\lambda_0=1+\frac{2\gamma+n+a-2}{a}$.   Then  for all $W_1, W_2 \in L^{2\lambda_0}\left((-\frac{\pi}{2},\frac{\pi}{2}), L^{2\lambda_0}_{k,a}(\mathbb{R}^n)\right) ,$ the operator $W_{1} \mathcal T_{S} W_{2}=W_{1}   T_{-1} W_{2}$
		belongs to $\mathcal{G}^{2 \lambda_{0}} (L^{2} ((-\frac{\pi}{2},\frac{\pi}{2}), L^2_{k,a}(\mathbb{R}^n) )    )$ and we have the estimate $$	\left\|W_{1} \mathcal T_{S} W_{2}\right\|_{\mathcal{G}^{2 \lambda_{0}}\left(L^{2}\left((-\frac{\pi}{2},\frac{\pi}{2}), L^2_{k,a}(\mathbb{R}^n)\right)  \right)} \leq  C \left\|W_{1}\right\| _{L^{2\lambda_0}\left((-\frac{\pi}{2},\frac{\pi}{2}), L^{2\lambda_0}_{k,a}(\mathbb{R}^n)\right)  } \left\|W_{2 }\right\| _{L^{2\lambda_0}\left((-\frac{\pi}{2},\frac{\pi}{2}), L^{2\lambda_0}_{k,a}(\mathbb{R}^n)\right)  }.$$
	\end{lemma}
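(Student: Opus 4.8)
The plan is to read off the statement from Lemma \ref{CH2diagggg}, which in turn rests on Proposition \ref{CH2prop1}, once the two endpoint bounds \eqref{CH2expo} have been verified for the analytic family $\{T_z\}$ attached to the surface $S$. The Remark following Lemma \ref{CH2diagggg} lets me work directly on $(-\frac{\pi}{2},\frac{\pi}{2})\times\mathbb{R}^n$. Recall that on this $S$ the choices $\psi(z)=\frac{1}{\Gamma(z+1)}$ and $R(\nu,\ell,m,j)=\nu-2\ell-\frac{2m}{a}$ give $\mathcal{T}_S=T_{-1}$, and that by \eqref{CH2kernl} the kernel $K_z(t,x,y)$ is the product of the scalar $\frac{1}{2\pi\Gamma(z+1)}$, the propagator kernel $\Lambda_{k,a}(x,y;t)$, and the one-dimensional series $\sum_{r=0}^\infty r_+^z e^{-irt}$. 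Thus the whole matter reduces to controlling $|K_z|$ on the two boundary lines of the strip $-\lambda_0\leq\operatorname{Re}(z)\leq 0$.

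First I would dispose of the line $\operatorname{Re}(z)=0$ as in \eqref{CH2two}: since $T_{is}$ acts as a Fourier multiplier supported on $S$, its norm as an operator on $L^2\left((-\frac{\pi}{2},\frac{\pi}{2}),L^2_{k,a}(\mathbb{R}^n)\right)$ equals $\|G_{is}\|_{\ell^\infty(\mathbb{Z}\times\mathcal{A})}=\left|1/\Gamma(1+is)\right|$, and the classical identity $|\Gamma(1+is)|^2=\pi s/\sinh(\pi s)$ furnishes the admissible growth $Ce^{\pi|s|/2}$. This is precisely the first bound in \eqref{CH2expo}.

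Next, on the line $\operatorname{Re}(z)=-\lambda_0$, I would establish that $|K_z(t,x,y)|$ is bounded uniformly in $x,y\in\mathbb{R}^n$ and $t\in(-\frac{\pi}{2},\frac{\pi}{2})$, because by \eqref{CH21inf} this yields the $L^1\to L^\infty$ bound. For this I insert Lemma \ref{CH2515}, which identifies the only singularity of $\sum_{r=0}^\infty r_+^z e^{-irt}$ at the origin as that of $\Gamma(z+1)(it)^{-z-1}$, together with Proposition \ref{eq200}(2), which bounds $|\Lambda_{k,a}(x,y;it)|$ by $|\sin t|^{-\frac{2\gamma+n+a-2}{a}}$, into the product \eqref{CH2kernl}. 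The factor $\Gamma(z+1)$ produced by the series cancels the $1/\Gamma(z+1)$ prefactor, leaving exactly the power law \eqref{CH2kernel}, namely $|K_z(t,x,y)|\sim|t|^{-\operatorname{Re}\left(z+1+\frac{2\gamma+n+a-2}{a}\right)}$. The choice $\lambda_0=1+\frac{2\gamma+n+a-2}{a}$ makes this exponent vanish, so $|K_z|$ is uniformly bounded on $\operatorname{Re}(z)=-\lambda_0$, giving the second bound in \eqref{CH2expo}.

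With both endpoint bounds in hand, Lemma \ref{CH2diagggg} applies verbatim and yields that $W_1\mathcal{T}_S W_2=W_1 T_{-1}W_2$ belongs to $\mathcal{G}^{2\lambda_0}\left(L^2\left((-\frac{\pi}{2},\frac{\pi}{2}),L^2_{k,a}(\mathbb{R}^n)\right)\right)$ with the asserted norm inequality, for all $W_1,W_2\in L^{2\lambda_0}\left((-\frac{\pi}{2},\frac{\pi}{2}),L^{2\lambda_0}_{k,a}(\mathbb{R}^n)\right)$. The genuinely analytic work is the kernel estimate \eqref{CH2kernel}, and the delicate point there is to confirm that, after the $\Gamma$-factors cancel, the singularity generated by the series, $|t|^{-\operatorname{Re}(z)-1}$, and the singularity $|t|^{-(2\gamma+n+a-2)/a}$ of $\Lambda_{k,a}$ amalgamate into the single exponent $\operatorname{Re}(z)+1+\frac{2\gamma+n+a-2}{a}$, so that the balance point pinning $\lambda_0$ is exactly $1+\frac{2\gamma+n+a-2}{a}$. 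Once \eqref{CH2kernel} is granted, the lemma is a mechanical invocation of Lemma \ref{CH2diagggg}.
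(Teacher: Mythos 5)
Your proposal is correct and follows essentially the same route as the paper: the $L^2\to L^2$ bound on $\operatorname{Re}(z)=0$ via $\|G_{is}\|_{\ell^\infty}=|1/\Gamma(1+is)|\leq Ce^{\pi|s|/2}$, the uniform kernel bound on $\operatorname{Re}(z)=-\lambda_0$ obtained by combining Lemma \ref{CH2515} with Proposition \ref{eq200}(2) so that the exponent in \eqref{CH2kernel} vanishes precisely when $\lambda_0=1+\frac{2\gamma+n+a-2}{a}$, and then the invocation of Lemma \ref{CH2diagggg}. The only cosmetic slip is describing $T_{is}$ as a multiplier ``supported on $S$'' (the symbol $G_{is}=R_+^{is}$ lives on all of $\mathbb{Z}\times\mathcal{A}$ and is supported where $R>0$), which does not affect the argument.
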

	
	\section{Restriction theorems and Strichartz inequalities for orthonomal functions}\label{s5}
	Let $S=\{(\nu, \ell, m , j)\in \mathbb{Z}\times \mathcal{A}: \nu=2\ell+\frac{2m}{a}\}   $ be a discrete  surface  on $\mathbb{Z}\times \mathcal{A}$ and $\lambda_0=1+\frac{2\gamma+n+a-2}{a}$. From Theorem \ref{as}, we have
	\begin{align}
		&\left\|W_1\mathcal{E}_S\left(\mathcal{E}_S\right)^*W_2\right\|_{\mathcal{G}^{2 \lambda_{0}}\left(L^{2}\left((-\frac{\pi}{2},\frac{\pi}{2}), L^2_{k,a}(\mathbb{R}^n)\right)  \right)}\nonumber \\
		&\leq  C \left\|W_{1}\right\| _{L^{2\lambda_0}\left((-\frac{\pi}{2},\frac{\pi}{2}), L^{2\lambda_0}_{k,a}(\mathbb{R}^n)\right)  } \left\|W_{2 }\right\| _{L^{2\lambda_0}\left((-\frac{\pi}{2},\frac{\pi}{2}), L^{2\lambda_0}_{k,a}(\mathbb{R}^n)\right)  }.\label{Schatten-decompostion}
	\end{align} 
	Taking $A=\mathcal{E}_S$, by Theorem \ref{Restriction-single}, Lemma \ref{duality-principle}, and \eqref{Schatten-decompostion} , we have the restriction theorem for the system of orthonomal functions.
	\begin{theorem}[Restriction estimates for orthonormal functions-diagonal case] \label{diagonal-restriction} Suppose $a=1,2$ and $k$ is a non-negative multiplicity function such that
		$$
		a+2 \gamma+n-2>0.
		$$
		Let $n\geq 1$ and $S=\{(\nu, \ell, m , j)\in \mathbb{Z}\times \mathcal{A}: \nu=2\ell+\frac{2m}{a}\}$.
		For any (possible infinity) orthonormal system $\left\{\{\hat{F_\iota}(\nu, \ell, m , j)\}_{(\nu, \ell, m , j)\in \mathbb{Z}\times \mathcal{A}}\right\}_{\iota \in{I}}$ in $\ell^2(S)$ and any sequence $\{n_\iota\}_{\iota \in{I}}$ in $\mathbb{C}$
		\begin{equation*}
			\begin{aligned}
				\bigg\|\sum_{\iota\in I}n_\iota|\mathcal{E}_S\{\{\hat{F_\iota}(\nu, \ell, m , j)\}|^2\bigg\|_{L^{1+\frac{a}{2\gamma+n+a-2}}\left((-\frac{\pi}{2},\frac{\pi}{2}), L^{1+\frac{a}{2\gamma+n+a-2}}_{k,a}(\mathbb{R}^n)\right)}
				\leq C\left\|\{n_\iota\}_{\iota\in{I}}\right\|_{\ell^{\frac{2(2\gamma+n+2a-2)}{4\gamma+2n+3a-4}}},
			\end{aligned}
		\end{equation*}		
		where $C>0$ is independent of $\left\{\{\hat{F_\iota}(\nu, \ell, m , j)\}_{(\mu,\nu)\in\mathbb{N}^n\times\mathbb{Z}}\right\}_{\iota\in{I}}$ and $\{n_\iota\}_{\iota\in{I}}$.
	\end{theorem}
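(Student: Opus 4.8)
The plan is to derive the estimate directly from the duality principle (Lemma \ref{duality-principle}) applied to the operator $A=\mathcal{E}_S$, feeding in the Schatten bound \eqref{Schatten-decompostion} as its hypothesis $(1)$. Once the bookkeeping of exponents is carried out, the conclusion is immediate. I would fix $\lambda=2\lambda_0$ with $\lambda_0=1+\frac{2\gamma+n+a-2}{a}=\frac{2\gamma+n+2a-2}{a}$, and choose the pair $(p,q)$ so that the weight space $L^{2q/(2-q)}\!\big(\cdot,L^{2p/(2-p)}_{k,a}\big)$ appearing in \eqref{Schatten} coincides with the diagonal space $L^{2\lambda_0}\big((-\tfrac{\pi}{2},\tfrac{\pi}{2}),L^{2\lambda_0}_{k,a}(\mathbb{R}^n)\big)$ of \eqref{Schatten-decompostion}. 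This forces $\frac{2p}{2-p}=\frac{2q}{2-q}=2\lambda_0$, i.e. $p=q=\frac{2\lambda_0}{1+\lambda_0}=\frac{2(2\gamma+n+2a-2)}{2\gamma+n+3a-2}$, and hence $p'=q'=\frac{2(2\gamma+n+2a-2)}{2\gamma+n+a-2}$. Taking $W_1=W$ and $W_2=\overline{W}$, estimate \eqref{Schatten-decompostion} is precisely statement $(1)$ of Lemma \ref{duality-principle} for this $\lambda$ and these $(p,q)$.

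Next I would check the standing hypothesis of Lemma \ref{duality-principle}, namely that $\mathcal{E}_S$ is bounded from $\ell^2(S)$ into $L^{q'}\big((-\tfrac{\pi}{2},\tfrac{\pi}{2}),L^{p'}_{k,a}(\mathbb{R}^n)\big)$. By Theorem \ref{Restriction-single} this holds as soon as $(p',q')$ is an admissible pair. With $\frac{1}{p'}=\frac{1}{q'}=\frac{2\gamma+n+a-2}{2(2\gamma+n+2a-2)}$ one has $0\le\frac{1}{q'}<\frac12$, and a short simplification gives $\frac12-\frac{1}{p'}=\frac{a}{2(2\gamma+n+2a-2)}$, so that $\frac{1}{q'}=\big(\frac{2\gamma+n+a-2}{a}\big)\big(\frac12-\frac{1}{p'}\big)$. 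Thus $(p',q')$ sits exactly on the endpoint line of the admissible region, and Theorem \ref{Restriction-single} applies.

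Finally I would read off statement $(2)$ of Lemma \ref{duality-principle}. The target Lebesgue exponents are $\frac{p'}{2}=\frac{q'}{2}=\frac{2\gamma+n+2a-2}{2\gamma+n+a-2}=1+\frac{a}{2\gamma+n+a-2}$, which matches the claimed mixed norm, while the summability exponent is $\lambda'=\frac{2\lambda_0}{2\lambda_0-1}=\frac{2(2\gamma+n+2a-2)}{4\gamma+2n+3a-4}$, matching the $\ell^{\lambda'}$-norm on the right-hand side. The remark following Lemma \ref{duality-principle} (together with that after Lemma \ref{CH2diagggg}) ensures the whole scheme is valid on $(-\tfrac{\pi}{2},\tfrac{\pi}{2})\times\mathbb{R}^n$ rather than on all of $\mathbb{T}\times\mathbb{R}^n$. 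I expect the only real work to be the exponent bookkeeping, the single delicate point being the verification that the \emph{diagonal} choice of weights lands $(p',q')$ precisely on the admissibility boundary; since no interpolation step and no loss is involved, no further obstacle should arise.
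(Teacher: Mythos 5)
Your proposal is correct and follows essentially the same route as the paper: the paper likewise takes $A=\mathcal{E}_S$, invokes the diagonal Schatten bound \eqref{Schatten-decompostion} from Lemma \ref{as} together with Theorem \ref{Restriction-single}, and concludes via the duality principle (Lemma \ref{duality-principle}). Your explicit exponent bookkeeping (in particular $p'=q'=\frac{2(2\gamma+n+2a-2)}{2\gamma+n+a-2}$, $\lambda'=\frac{2(2\gamma+n+2a-2)}{4\gamma+2n+3a-4}$, and the verification that $(p',q')$ lies on the admissibility boundary) is exactly what the paper leaves implicit.
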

	By \eqref{eq11}, we generalize the Strichartz inequality involving systems of orthonormal functions.
	\begin{theorem}[Strichartz inequalities for orthonormal functions-diagonal case]\label{diagonal} Suppose $a=1,2$, $n\geq1$ and $k$ is a non-negative multiplicity function such that
		$$
		a+2 \gamma+n-2>0.
		$$
		For any (possible infinity) orthonormal system $\left\{f_\iota\right\}_{\iota\in{I}}$ in $L_{k,a}^2(\mathbb{R}^n)$ and any sequence $\{n_\iota\}_{\iota\in{I}}$ in $\mathbb{C}$, we have
		\begin{equation*}
			\begin{aligned}
				\bigg\|\sum_{\iota \in I}n_\iota|e^{-it\Delta_ {k,a}}f_\iota|^2\bigg\|_{L^{1+\frac{a}{2\gamma+n+a-2}}\left((-\frac{\pi}{2},\frac{\pi}{2}), L^{1+\frac{a}{2\gamma+n+a-2}}_{k,a}(\mathbb{R}^n)\right)}\leq C\left\|\{n_\iota\}_{\iota\in{I}}\right\|_{\ell^{\frac{2(2\gamma+n+2a-2)}{4\gamma+2n+3a-4}}},
			\end{aligned}
		\end{equation*}		
		where $C>0$ is independent of $\{f_\iota \}_{\iota\in I}$ and $\{n_\iota \}_{\iota \in I }$.
	\end{theorem}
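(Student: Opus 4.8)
The plan is to deduce this statement directly from Theorem \ref{diagonal-restriction}, using the explicit relation between the extension operator $\mathcal{E}_S$ and the Schr\"odinger propagator $e^{-it\Delta_{k,a}}$ recorded in \eqref{eq11}. In effect, the two theorems are the same inequality written in two different notations, so the work reduces to a change of variables plus the observation that a unimodular phase drops out of a squared modulus.

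First I would associate to each $f_\iota \in L^2_{k,a}(\mathbb{R}^n)$ the sequence $\hat{F_\iota}$ supported on $S$, defined by $\hat{F_\iota}(\nu,\ell,m,j)=\hat{f_\iota}(\ell,m,j)$ when $\nu=2\ell+\frac{2m}{a}$ and $0$ otherwise, exactly as in the construction preceding \eqref{eq10}. Since $(\ell,m,j)\mapsto(2\ell+\tfrac{2m}{a},\ell,m,j)$ is a bijection of $\mathcal{A}$ onto $S$, the $\ell^2(S)$ inner product of $\hat{F_\iota}$ and $\hat{F_\kappa}$ collapses to $\sum_{(\ell,m,j)\in\mathcal{A}}\hat{f_\iota}(\ell,m,j)\overline{\hat{f_\kappa}(\ell,m,j)}$, which by the Plancherel formula for the $(k,a)$-generalized Laguerre transform (used in the form of its polarization) equals $\langle f_\iota,f_\kappa\rangle_{L^2_{k,a}(\mathbb{R}^n)}$. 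Hence if $\{f_\iota\}_{\iota\in I}$ is orthonormal in $L^2_{k,a}(\mathbb{R}^n)$, then $\{\hat{F_\iota}\}_{\iota\in I}$ is orthonormal in $\ell^2(S)$, and Theorem \ref{diagonal-restriction} applies to it with the same constant $C$.

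Next I would invoke \eqref{eq11}, which yields
$$\mathcal{E}_S(\{\hat{F_\iota}(\nu,\ell,m,j)\})(t,x)=e^{it\left(1+\frac{1}{a}(2\gamma+n-2)\right)}\,e^{-it\Delta_{k,a}}f_\iota(x).$$
The prefactor $e^{it(1+\frac{1}{a}(2\gamma+n-2))}$ is unimodular, so taking squared moduli makes it disappear:
$$\big|\mathcal{E}_S(\{\hat{F_\iota}\})(t,x)\big|^2=\big|e^{-it\Delta_{k,a}}f_\iota(x)\big|^2$$
pointwise in $(t,x)$. Substituting this identity into the left-hand side of Theorem \ref{diagonal-restriction} turns that estimate verbatim into the claimed bound, with the identical mixed-norm exponent $1+\frac{a}{2\gamma+n+a-2}$ on the left and the identical $\ell^{\frac{2(2\gamma+n+2a-2)}{4\gamma+2n+3a-4}}$ norm on the right.

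Because every piece of analytic content---the Schatten bound of Lemma \ref{as}, the duality principle of Lemma \ref{duality-principle}, and the single-function restriction estimate of Theorem \ref{Restriction-single}---is already packaged inside Theorem \ref{diagonal-restriction}, there is no genuine obstacle left at this stage; the only points needing care are the two bookkeeping facts above, namely that $f_\iota\mapsto\hat{F_\iota}$ is an isometry onto $\ell^2(S)$ so that orthonormality is preserved, and that the phase in \eqref{eq11} has modulus one so that it drops out of $|\cdot|^2$. Both are immediate, so the argument is essentially a one-line substitution into an inequality already proved.
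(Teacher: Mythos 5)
Your proposal is correct and follows exactly the paper's own (very terse) argument: the paper deduces Theorem \ref{diagonal} from Theorem \ref{diagonal-restriction} via the identity \eqref{eq11}, and you have simply spelled out the two bookkeeping facts the paper leaves implicit, namely that $f_\iota\mapsto\hat{F_\iota}$ is an isometry onto $\ell^2(S)$ preserving orthonormality and that the unimodular phase in \eqref{eq11} disappears under $|\cdot|^2$. No gap; nothing further is needed.
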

	To obtain the Strichartz inequality for the general case, we have to  prove the following Schatten boundedness.
	\begin{proposition}
		[General Schatten bound for the extension operator]\label{general-schatten-bound}		Suppose $a=1,2$ and $k$ is a non-negative multiplicity function such that
		$$
		a+2 \gamma+n-2>0.
		$$
		Let $n\geq 1$ and $S=\{(\nu, \ell, m , j)\in \mathbb{Z}\times \mathcal{A}: \nu=2\ell+\frac{2m}{a}\}$.
		Then for any $p,q\geq 1$ satisfying 
		\begin{equation*}
			\frac{1}{q}+\frac{2 \gamma+n+a-2}{pa}=\frac{1}{2},\quad \frac{4\gamma+2n+3a-4}{a}<p\leq\frac{2(2\gamma+n+2a-2)}{a} ,
		\end{equation*}
		we have
		\begin{equation*}
			\left\|W_1 \mathcal{T}_SW_2\right\|_{\mathcal{G}^p\left(L^2((-\frac{\pi}{2},\frac{\pi}{2}), L_{k,a}^2(\mathbb{R}^n))\right)}\leq C\left\|W_1\right\|_{L^q((-\frac{\pi}{2},\frac{\pi}{2}), L_{k,a}^p(\mathbb{R}^n))}\left\|W_2\right\|_{L^q((-\frac{\pi}{2},\frac{\pi}{2}), L_{k,a}^p(\mathbb{R}^n))},
		\end{equation*} 
		for all $W_1,W_2\in L^q((-\frac{\pi}{2},\frac{\pi}{2}), L_{k,a}^p(\mathbb{R}^n))$, where $C>0$ is independent of $W_1,W_2$.
	\end{proposition}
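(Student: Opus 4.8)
The plan is to fix an admissible pair $(p,q)$ in the stated range and to treat the top endpoint $p=2\lambda_0$ and the open range $2\lambda_0-1<p<2\lambda_0$ separately, writing $\delta:=\frac{2\gamma+n+a-2}{a}$ so that $\lambda_0=1+\delta$, the scaling relation reads $\tfrac1q+\tfrac{\delta}{p}=\tfrac12$, and the two endpoints of the hypothesis are $p>2\delta+1$ and $p\le 2\lambda_0$. At $p=2\lambda_0$ the scaling relation forces $q=2\lambda_0$ as well, so the claim is exactly the diagonal bound of Lemma \ref{as}. It therefore remains to handle $2\lambda_0-1<p<2\lambda_0$, and here I would set $\sigma:=p/2\in(\lambda_0-\tfrac12,\lambda_0)$ and run a complex interpolation of the analytic family $\{T_z\}$ from Section \ref{s4}, now loaded with the two multiplication weights raised to $z$-dependent powers. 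Writing the polar decompositions $W_i=|W_i|\,\omega_i$ with $|\omega_i|=1$, I consider on the strip $-\sigma\le\operatorname{Re}(z)\le 0$ (which contains $z=-1$) the family $S_z:=|W_1|^{-z}\omega_1\,T_z\,\omega_2|W_2|^{-z}$, so that $S_{-1}=W_1\mathcal{T}_SW_2$ is exactly the target operator, and the exponent $-z$ is chosen to equal $0$ on $\operatorname{Re}(z)=0$ and $\sigma$ on $\operatorname{Re}(z)=-\sigma$.

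On the line $\operatorname{Re}(z)=0$ the weight factors $|W_i|^{-is}$ are unimodular, so by \eqref{CH2two} one gets the pure operator (i.e.\ $\mathcal{G}^\infty$) bound $\|S_{is}\|_{\mathcal{G}^\infty}=\|T_{is}\|_{L^2\to L^2}\le C e^{\pi|s|/2}$, with no weight norm appearing. This is the first interpolation endpoint.

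The heart of the argument is the line $\operatorname{Re}(z)=-\sigma$, where I claim a mixed-norm Hilbert--Schmidt bound. The operator $S_{-\sigma+is}$ has integral kernel $|W_1(t,x)|^{\sigma}\omega_1\,K_{-\sigma+is}(t-\tau,x,y)\,\omega_2|W_2(\tau,y)|^{\sigma}$ up to unimodular factors, and the kernel estimate \eqref{CH2kernel}, which rests on Lemma \ref{CH2515} and Proposition \ref{eq200}, yields $|K_{-\sigma+is}(t,x,y)|\lesssim |t|^{-(1+\delta-\sigma)}$ \emph{uniformly in} $x,y$. Computing the Hilbert--Schmidt norm and using this uniform bound to factor the spatial integrals produces $\int|W_i(t,\cdot)|^{2\sigma}v_{k,a}=\|W_i(t,\cdot)\|_{L^p_{k,a}}^{p}$ (because $p=2\sigma$), leaving the one-dimensional integral $\iint \|W_1(t,\cdot)\|_{L^p_{k,a}}^{p}\,|t-\tau|^{-2(1+\delta-\sigma)}\,\|W_2(\tau,\cdot)\|_{L^p_{k,a}}^{p}\,dt\,d\tau$. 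Applying the one-dimensional Hardy--Littlewood--Sobolev inequality in $t$ with $g_i(t)=\|W_i(t,\cdot)\|_{L^p_{k,a}}^{p}$ gives $\|S_{-\sigma+is}\|_{\mathcal{G}^2}\le C(s)\,\|W_1\|_{L^qL^p_{k,a}}^{\sigma}\|W_2\|_{L^qL^p_{k,a}}^{\sigma}$, and the HLS exponents reproduce precisely the scaling relation $\tfrac1q+\tfrac{\delta}{p}=\tfrac12$. Crucially, the admissibility of this HLS, namely $0<2(1+\delta-\sigma)<1$, is equivalent to $\sigma>\delta+\tfrac12$, that is $p>2\delta+1=\frac{4\gamma+2n+3a-4}{a}$, which is exactly the lower endpoint in the hypothesis. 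I expect this mixed-norm Hilbert--Schmidt estimate, and in particular the HLS step, to be the main obstacle and the decisive point that fixes the lower threshold.

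To finish, I would invoke Stein's interpolation theorem on the Schatten scale for $\{S_z\}$ with $\theta=1/\sigma$, so that $-1=(1-\theta)\cdot 0+\theta\cdot(-\sigma)$ and $\mathcal{G}^\infty$, $\mathcal{G}^2$ interpolate to $\mathcal{G}^{2\sigma}=\mathcal{G}^{p}$. Since the weight norms enter only on the $\operatorname{Re}(z)=-\sigma$ line with power $\sigma$, and $\sigma\theta=1$, they collapse to exponent one, yielding $\|W_1\mathcal{T}_SW_2\|_{\mathcal{G}^p}\le C\|W_1\|_{L^qL^p_{k,a}}\|W_2\|_{L^qL^p_{k,a}}$. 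The routine points to verify are: reducing to $W_1,W_2$ bounded with compact support by density; checking that the constants $C(s)$ grow at most exponentially (the factor $1/\Gamma(z+1)$ via Stirling and the kernel constant), so that Stein's theorem applies; and noting that the construction requires $-1$ to lie in the strip, i.e.\ $\sigma>1$, which holds throughout the stated range when $\delta\ge\tfrac12$, the residual subrange $p\le 2$ (nonempty only if $\frac{4\gamma+2n+3a-4}{a}<2$) being reached by taking the Hilbert--Schmidt estimate on the boundary line itself together with Lemma \ref{as}.
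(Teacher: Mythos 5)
Your architecture is exactly the paper's: the weighted analytic family $|W_1|^{-z}T_z|W_2|^{-z}$ on a strip, the $\mathcal{G}^\infty$ bound on $\operatorname{Re}(z)=0$ from (\ref{CH2two}), a Hilbert--Schmidt bound on the left edge obtained by integrating the kernel estimate (\ref{CH2kernel}) against a one-dimensional HLS inequality in the time variable, and Schatten-scale Stein interpolation (the paper cites Theorem 2.9 of \cite{Simon}) to land in $\mathcal{G}^{2\lambda}=\mathcal{G}^p$ at $z=-1$. You also correctly identify that the integrability condition $0\le 2\lambda_0-2\lambda<1$ in the HLS step is what produces the lower threshold $p>\frac{4\gamma+2n+3a-4}{a}$, and your separate treatment of the endpoint $p=2\lambda_0$ via Lemma \ref{as} is consistent with the paper (which absorbs it by allowing $\lambda=0$ in its HLS lemma).

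The one concrete gap is your use of the Euclidean Hardy--Littlewood--Sobolev inequality with kernel $|t-\tau|^{-2(1+\delta-\sigma)}$. The time variables $t,\tau$ each range over $(-\tfrac{\pi}{2},\tfrac{\pi}{2})$, so $t-\tau$ ranges over $(-\pi,\pi)$, and the kernel $K_{-\sigma+is}(t-\tau,x,y)$ contains the factor $\Lambda_{k,a}(x,y;t-\tau)$, which by Proposition \ref{eq200} is controlled only by $|\sin(t-\tau)|^{-\frac{2\gamma+n+a-2}{a}}$ --- a quantity that blows up not only at $t-\tau=0$ but also as $|t-\tau|\to\pi$ (and this blow-up is genuine, by the $\pi$-periodicity relation for $\Lambda_{k,a}$). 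Your claimed uniform bound $|K_{-\sigma+is}(t,x,y)|\lesssim|t|^{-(1+\delta-\sigma)}$ is therefore valid only for $|t|$ bounded away from $\pi$, and the pointwise domination needed for the Euclidean HLS fails on the corner regions $|t-\tau|$ near $\pi$. This is precisely why the paper replaces $|t-\tau|^{-\mu}$ by $|\sin(t-\tau)|^{-\mu}$ and proves the ``revised'' HLS inequality, Lemma \ref{r-HLS}, whose proof splits $(-\tfrac{\pi}{2},\tfrac{\pi}{2})^2$ into the diagonal strip $|t-\tau|\le\tfrac{\pi}{2}$ (where $|\sin(t-\tau)|\sim|t-\tau|$ and the classical HLS applies) and the two corner triangles (handled by translating by $\pi$). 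Your argument goes through verbatim once you substitute $|\sin(t-\tau)|$ for $|t-\tau|$ and invoke that lemma in place of the standard HLS; as written, though, the Hilbert--Schmidt estimate on $\operatorname{Re}(z)=-\sigma$ --- the step you yourself single out as decisive --- is not justified on all of $(-\tfrac{\pi}{2},\tfrac{\pi}{2})^2$.
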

	In order to prove Proposition \ref{general-schatten-bound}, we need to obtain the following revised Hardy-Littlewood-Sobolev inequality.
	\begin{lemma}\label{r-HLS}
		For $0\leq \lambda<1$ and $1<p,q<\infty$ such that $\frac{1}{p}+\frac{1}{q}+\lambda=2$, we have
		\begin{equation*}
			\left|\int^{\frac{\pi}{2}}_{-\frac{\pi}{2}}\int^{\frac{\pi}{2}}_{-\frac{\pi}{2}}   \frac{g(t)h(\tau)}{|\sin(t-\tau)|^\lambda}dtd\tau\right|\leq C\|g\|_{L^p(-\frac{\pi}{2},\frac{\pi}{2})}\|h\|_{L^q(-\frac{\pi}{2},\frac{\pi}{2})}.
		\end{equation*}
	\end{lemma}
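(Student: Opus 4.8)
The plan is to reduce the estimate to the classical one-dimensional Hardy--Littlewood--Sobolev (HLS) inequality: for $0<\lambda<1$ and $1<p,q<\infty$ with $\frac1p+\frac1q+\lambda=2$,
\begin{equation*}
\left|\int_{\mathbb{R}}\int_{\mathbb{R}}\frac{f(x)\,g(y)}{|x-y|^{\lambda}}\,dx\,dy\right|\leq C\,\|f\|_{L^{p}(\mathbb{R})}\,\|g\|_{L^{q}(\mathbb{R})}.
\end{equation*}
Note first that the hypotheses $1<p,q<\infty$ force $\lambda=2-\frac1p-\frac1q>0$, so only the range $0<\lambda<1$ actually occurs. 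Since the modulus of the left-hand side of the stated inequality is dominated by the same double integral with $g,h$ replaced by $|g|,|h|$, I may assume $g,h\geq0$ throughout. The sole difference between our kernel and the Euclidean Riesz kernel is that $\sin(t-\tau)$ vanishes not only on the diagonal $t=\tau$ but also at the two corners where $t-\tau\to\pm\pi$; recognizing and taming this second (``wrap-around'') singularity is the heart of the argument.

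First I would record the elementary comparison coming from Jordan's inequality: for $s=t-\tau\in(-\pi,\pi)$,
\begin{equation*}
|\sin s|\geq \tfrac{2}{\pi}\,\mathrm{dist}(s,\pi\mathbb{Z})=\tfrac{2}{\pi}\,\min\bigl(|s|,\,\pi-|s|\bigr),
\end{equation*}
so that, using the elementary bound $\min(a,b)^{-\lambda}=\max(a^{-\lambda},b^{-\lambda})\leq a^{-\lambda}+b^{-\lambda}$,
\begin{equation*}
\frac{1}{|\sin(t-\tau)|^{\lambda}}\leq \Bigl(\tfrac{\pi}{2}\Bigr)^{\lambda}\Bigl(\,|t-\tau|^{-\lambda}+(\pi-|t-\tau|)^{-\lambda}\Bigr),\qquad t,\tau\in\bigl(-\tfrac{\pi}{2},\tfrac{\pi}{2}\bigr).
\end{equation*}
This splits the bilinear form into a \emph{diagonal part} carrying the kernel $|t-\tau|^{-\lambda}$ and a \emph{corner part} carrying the kernel $(\pi-|t-\tau|)^{-\lambda}$.

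For the diagonal part I would extend $g,h$ by zero to $\mathbb{R}$ and apply the classical HLS inequality directly, obtaining the bound $C\|g\|_{L^{p}}\|h\|_{L^{q}}$. For the corner part I would split the square along the diagonal. On $\{t>\tau\}$ one has $\pi-|t-\tau|=\pi+\tau-t=|t-(\tau+\pi)|$, so the substitution $\tau\mapsto\tau+\pi$ turns this kernel into the Euclidean kernel $|t-\tau'|^{-\lambda}$ acting on $g(t)$ and the translate $h(\tau'-\pi)$; on $\{t<\tau\}$ the symmetric substitution $t\mapsto t+\pi$ does the same. Since translation preserves the $L^{p}$ and $L^{q}$ norms and both translated functions lie in the respective Lebesgue spaces, a second application of the classical HLS inequality bounds the corner part by $C\|g\|_{L^{p}}\|h\|_{L^{q}}$. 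Adding the two contributions yields the claim.

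I expect the main obstacle to be conceptual rather than computational: unlike the Euclidean case, the kernel $|\sin(t-\tau)|^{-\lambda}$ has two singular loci, and the naive comparison $|\sin(t-\tau)|\asymp|t-\tau|$ fails near the corners $(\pm\tfrac{\pi}{2},\mp\tfrac{\pi}{2})$. The cleanest way to see the whole structure is the change of variables $\theta=2t,\ \phi=2\tau$, under which $|\sin(t-\tau)|=\tfrac12|e^{i\theta}-e^{i\phi}|$ becomes the chordal distance on the unit circle, so that the inequality is precisely HLS on the compact manifold $S^{1}$ with its chordal (equivalently geodesic) metric; the two singularities are then just the single diagonal $e^{i\theta}=e^{i\phi}$ seen in the fundamental domain. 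This viewpoint both motivates the splitting above and shows that, if preferred, one may instead invoke the known Hardy--Littlewood--Sobolev inequality on $S^{1}$, itself obtained by localization from the Euclidean one.
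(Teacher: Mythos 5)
Your proof is correct and follows essentially the same route as the paper: both arguments reduce to the classical one-dimensional Hardy--Littlewood--Sobolev inequality by comparing $|\sin(t-\tau)|$ with $|t-\tau|$ near the diagonal and with $\pi-|t-\tau|$ near the corners, and both handle the corner contribution by translating one function by $\pi$ so that the kernel becomes the Euclidean Riesz kernel. The only (cosmetic) difference is that you split the kernel into a sum of two Riesz-type kernels via $\min(a,b)^{-\lambda}\leq a^{-\lambda}+b^{-\lambda}$, whereas the paper splits the domain $(-\tfrac{\pi}{2},\tfrac{\pi}{2})^2$ into the three regions $\{|t-\tau|\leq\tfrac{\pi}{2}\}$, $\{\tfrac{\pi}{2}<t-\tau<\pi\}$, $\{\tfrac{\pi}{2}<\tau-t<\pi\}$.
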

	\begin{proof}
		Without loss of generality, we assume $g,h\geq 0$.
		We divide $(-\frac{\pi}{2},\frac{\pi}{2})^2$ into three disjoint subsets:
		$$B_1=\left \{(t,\tau)\in (-\frac{\pi}{2},\frac{\pi}{2})^2: |t-\tau|\leq \frac{\pi}{2}\right\},$$ $$ B_2=\left\{(t,\tau)\in (-\frac{\pi}{2},\frac{\pi}{2})^2: \frac{\pi}{2}<t-\tau<\pi\right\},$$  and $$ B_3=\left\{(t,\tau)\in (-\frac{\pi}{2},\frac{\pi}{2})^2: \frac{\pi}{2}<\tau-t<\pi\right\}.$$ Then the integral is split into three parts
		\begin{equation*}
			I=\int^{\frac{\pi}{2}}_{-\frac{\pi}{2}}\int^{\frac{\pi}{2}}_{-\frac{\pi}{2}}   \frac{g(t)h(\tau)}{|\sin(t-\tau)|^\lambda}dtd\tau:=I_1+I_2+I_3,
		\end{equation*}
		where
		\begin{equation*}
			I_j=\int_{B_j}  \frac{g(t)h(\tau)}{|\sin(t-\tau)|^\lambda}dtd\tau,~\forall j=1,2,3.
		\end{equation*}
		For any $t\in [0,\frac{\pi}{2}]$, we have $\frac{2}{\pi}t\leq \sin t\leq t$.
		Thus when $(t,\tau)\in B_1$,   we get $\frac{2}{\pi}|t-\tau|\leq |\sin (t-\tau)|\leq |t-\tau|$. Then
		\begin{align}\label{I1}\nonumber
			I_1&=\int_{B_1}  \frac{g(t)h(\tau)}{|\sin(t-\tau)|^\lambda}dtd\tau\\\nonumber
			&\leq \left(\frac{\pi}{2}\right)^\lambda\int^{\frac{\pi}{2}}_{-\frac{\pi}{2}}\int^{\frac{\pi}{2}}_{-\frac{\pi}{2}}   \frac{g(t)h(\tau)}{|t-\tau|^\lambda}dtd\tau\\
			&\leq C\|g\|_{L^p(-\frac{\pi}{2},\frac{\pi}{2})}\|h\|_{L^q(-\frac{\pi}{2},\frac{\pi}{2})}.
		\end{align}
		When $(t,\tau)\in B_2$, we have $0<\pi+\tau-t<\frac{\pi}{2}$ and $(t,\pi+\tau)\in (-\frac{\pi}{2},\frac{\pi}{2})\times(\frac{\pi}{2},\frac{3\pi}{2})$. Then $\frac{2}{\pi}(\pi+\tau-t)\leq \sin (\pi+\tau-t)\leq (\pi+\tau-t)$, i.e., $\frac{2}{\pi}(\pi+\tau-t)\leq \sin (t-\tau)\leq (\pi+\tau-t)$. Thus it follows that
		\begin{align}\label{I2}\nonumber
			I_2&=\int_{B_2}  \frac{g(t)h(\tau)}{|\sin(t-\tau)|^\lambda}dtd\tau\\\nonumber
			&\leq \left(\frac{\pi}{2}\right)^\lambda\int_{B_2} \frac{g(t)h(\tau)}{|\pi+\tau-t|^\lambda}dtd\tau\\\nonumber
			&\leq\int^{\frac{\pi}{2}}_{-\frac{\pi}{2}}g(t)dt\int^{\frac{3\pi}{2}}_{\frac{\pi}{2}}\frac{h(s-\pi)}{|s-t|^\lambda}ds  \\\nonumber
			&\leq C\|g\|_{L^p(-\frac{\pi}{2},\frac{\pi}{2})}\|h(\cdot-\pi)\|_{L^q(\frac{\pi}{2},\frac{3\pi}{2})}\\
			&= C\|g\|_{L^p(-\frac{\pi}{2},\frac{\pi}{2})}\|h\|_{L^q(-\frac{\pi}{2},\frac{\pi}{2})}.
		\end{align}
		When $(t,\tau)\in B_3$, analogous to the subset $B_2$, we also  have \begin{align}\label{I3}
			I_3\leq C\|g\|_{L^p(-\frac{\pi}{2},\frac{\pi}{2})}\|h\|_{L^q(-\frac{\pi}{2},\frac{\pi}{2})}. \end{align}
		Therefore, from (\ref{I1}), (\ref{I2}), and (\ref{I3}), we have $$I=I_1+I_2+I_3\leq C\|g\|_{L^p(-\frac{\pi}{2},\frac{\pi}{2})}\|h\|_{L^q(-\frac{\pi}{2},\frac{\pi}{2})}.$$
	\end{proof}
	Now we are in a position  to prove Proposition \ref{general-schatten-bound}.
	\begin{proof}[Proof of Proposition \ref{general-schatten-bound}]
		For $0<\lambda<\lambda_0=1+\frac{2\gamma+n+a-2}{a}$, the operator $T_{-\lambda+is}$ is an integral operator with kernel $K_{-\lambda+is}(t-\tau,x,y)$ defined in (\ref{CH2kernl}), Applying Lemma \eqref{r-HLS} along with (\ref{CH2two}) and (\ref{CH2kernel}), we have  
		\begin{equation*}
			\begin{aligned}
				&\|W_1^{\lambda-is}T_{-\lambda+is}W_2^{\lambda-is}\|_{\mathcal{G}^2\left(L^2((-\frac{\pi}{2},\frac{\pi}{2}), L_{k,a}^2(\mathbb{R}^n))\right)}^2\\
				=&\int^{\frac{\pi}{2}}_{-\frac{\pi}{2}}\int^{\frac{\pi}{2}}_{-\frac{\pi}{2}}\int_{\mathbb{R}^{2n}}|W_1(t,x)|^{2\lambda}|K_{-\lambda+is}(t-\tau,x,y)|^2|W_2(\tau,y)|^{2\lambda}v_{k,a}(x)dx\;v_{k,a}(y)dydtd\tau\\
				\leq  &C\int^{\frac{\pi}{2}}_{-\frac{\pi}{2}}\int^{\frac{\pi}{2}}_{-\frac{\pi}{2}}\int_{\mathbb{R}^{2n}}\frac{|W_1(t,x)|^{2\lambda}|W_2(\tau,y)|^{2\lambda}}{|\sin(t-\tau)|^{2\lambda_0-2\lambda}}v_{k,a}(x)dx\;v_{k,a}(y)dydtd\tau\\
				\leq  &C\int^{\frac{\pi}{2}}_{-\frac{\pi}{2}}\int^{\frac{\pi}{2}}_{-\frac{\pi}{2}}\frac{\|W_1(t,\cdot)\|_{L_{k,a}^{2\lambda}(\mathbb{R}^n)}^{2\lambda}\|W_2(\tau,\cdot)\|_{L_{k,a}^{2\lambda}(\mathbb{R}^n)}^{2\lambda}}{|\sin(t-\tau)|^{2\lambda_0-2\lambda}}dtd\tau\\
				\leq  &C\|W_1\|_{L^{\frac{2\lambda}{1+\lambda-\lambda_0}}\left((-\frac{\pi}{2},\frac{\pi}{2}), L_{k,a}^{2\lambda}(\mathbb{R}^n)\right)}^{2\lambda}\|W_2\|_{L^{\frac{2\lambda}{1+\lambda-\lambda_0}}\left((-\frac{\pi}{2},\frac{\pi}{2}), L_{k,a}^{2\lambda}(\mathbb{R}^n)\right)}^{2\lambda},
			\end{aligned}
		\end{equation*}               
		provided $0\leq2\lambda_0-2\lambda<1$, i.e., $2\lambda_0-1<2\lambda\leq2\lambda_0$. By Theorem 2.9 of \cite{Simon} and the identity $\mathcal{T}_S=T_{-1}$, we have 
		\begin{equation}\label{part}
			\begin{aligned}
				&\left\|W_1 \mathcal{T}_SW_2\right\|_{\mathcal{G}^{2\lambda}\left(L^2((-\frac{\pi}{2},\frac{\pi}{2}), L_{k,a}^2(\mathbb{R}^n))\right)}=\left\|W_1 T_{-1}W_2\right\|_{\mathcal{G}^{2\lambda}\left(L^2((-\frac{\pi}{2},\frac{\pi}{2}), L_{k,a}^2(\mathbb{R}^n))\right)}\\  \leq &C\|W_1\|_{L^{\frac{2\lambda}{1+\lambda-\lambda_0}}\left((-\frac{\pi}{2},\frac{\pi}{2}), L_{k,a}^{2\lambda}(\mathbb{R}^n)\right)}\|W_2\|_{L^{\frac{2\lambda}{1+\lambda-\lambda_0}}\left((-\frac{\pi}{2},\frac{\pi}{2}), L_{k,a}^{2\lambda}(\mathbb{R}^n)\right).}
			\end{aligned} 
		\end{equation}
		Again	by Lemma \ref{duality-principle}, we have
		\begin{equation}\label{1}
			\begin{aligned}
				\bigg\|\sum_{\iota\in I}n_\iota|\mathcal{E}_S\{\{\hat{F_\iota}(\nu, \ell, m , j)\}|^2\bigg\|_{L^{\frac{\lambda}{\lambda_0-1}}\left((-\frac{\pi}{2},\frac{\pi}{2}), L^{\frac{\lambda}{\lambda-1}}(\mathbb{R}_+^n,dw_\alpha)\right)}
				\leq C\bigg(\sum_{\iota\in I}|n_\iota|^{(2\lambda)^\prime}\bigg)^{(2\lambda)^\prime},
			\end{aligned}
		\end{equation}		 
		for any orthonormal system $\left\{\{\hat{F_\iota}(\nu, \ell, m , j)\}\right\}_{\iota\in{I}}$ in $\ell^2 (S)$ and $\{n_\iota\}_{\iota\in{I}}$ in $\mathbb{C}$, which is equivalent to
		\begin{equation}\label{2}
			\begin{aligned}
				\bigg\|\sum_{\iota\in I}n_\iota|e^{-it\Delta_{k,a}}f_i|^2\bigg\|_{L^{\frac{\lambda}{\lambda_0-1}}\left((-\frac{\pi}{2},\frac{\pi}{2}), L_{k,a}^{\frac{\lambda}{\lambda-1}}(\mathbb{R}^n)\right)}
				\leq C\bigg(\sum_{\iota\in I}|n_\iota|^{(2\lambda)^\prime}\bigg)^{(2\lambda)^\prime},
			\end{aligned}
		\end{equation}		
		for any orthonormal system $\{f_\iota\}_{\iota\in I}$ in $L_{k,a}^2(\mathbb{R}^n)$ and $\{n_\iota\}_{\iota\in I}$ in $\mathbb{C}$. This completes the proof of the proposition.
	\end{proof}
	Now we are ready to establish the restriction theorem for systems of orthonormal functions.
	\begin{theorem}[Restriction estimates for orthonormal functions-general case] 	Suppose $a=1,2$ and $k$ is a non-negative multiplicity function such that
		$
		a+2 \gamma+n-2>0.
		$ Let $n\geq 1$ and $S=\{(\nu, \ell, m , j)\in \mathbb{Z}\times \mathcal{A}: \nu=2\ell+\frac{2m}{a}\}$. If $p, q, n \geqslant$ 1 such that
		$$
		1 \leqslant p<\frac{4\gamma+2n+3a-4}{  4\gamma+2n+a-4} \quad \text { and } \quad 	\frac{1}{q}+\frac{2 \gamma+n+a-2}{pa}=\frac{2 \gamma+n+a-2}{a},
		$$
		for any (possible infinity) orthonormal system $\left\{\{\hat{F_\iota}(\nu, \ell, m , j)\}_{(\nu, \ell, m , j)\in \mathbb{Z}\times \mathcal{A}}\right\}_{\iota \in{I}}$  in $\ell^{2}(S)$ and any sequence $\left\{n_{\iota }\right\}_{\iota \in I}$ in $\mathbb{C}$, we have
		\begin{align}\label{p-q range}
			\left \| \sum_{\iota \in I} n_{\iota } \left|  \mathcal{E}_{S} \left\{\hat{F}_{\iota }(\nu, \ell, m , j)\right\} \right|^{2} \right\|_{L^q((-\frac{\pi}{2},\frac{\pi}{2}), L_{k,a}^p(\mathbb{R}^n))} \leqslant C\left(\sum_{\iota }\left|n_{\iota }\right|^{\frac{2 p}{p+1}}\right)^{\frac{p+1}{2 p}},
		\end{align}
		where $C>0$ is independent of  $\left\{\{\hat{F_\iota}(\nu, \ell, m , j)\}_{(\nu, \ell, m , j)\in \mathbb{Z}\times \mathcal{A}}\right\}_{\iota \in{I}}$  and  $\left\{n_{\iota}\right\}_{\iota  \in J}$.
		
	\end{theorem}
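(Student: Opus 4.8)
The overall strategy is to read the target inequality \eqref{p-q range} as the orthonormal dual of a weighted Schatten bound, and then to cover the admissible range in two overlapping pieces: an upper subinterval coming directly from Proposition \ref{general-schatten-bound}, and a lower subinterval obtained by interpolating against a trivial endpoint. First I would fix the exponent dictionary. In Lemma \ref{duality-principle} I take $A=\mathcal{E}_S$ and Schatten index $\lambda=\frac{2p}{p-1}$, so that $\lambda'=\frac{2p}{p+1}$ matches the right-hand side of \eqref{p-q range}. A short affine computation shows that statement (2) of Lemma \ref{duality-principle} with target space $L^{q}((-\frac{\pi}{2},\frac{\pi}{2}),L^{p}_{k,a}(\mathbb{R}^n))$ is equivalent to
\begin{equation*}
\left\|W\mathcal{T}_S\overline{W}\right\|_{\mathcal{G}^{\frac{2p}{p-1}}\left(L^2((-\frac{\pi}{2},\frac{\pi}{2}),L^2_{k,a}(\mathbb{R}^n))\right)}\le C\left\|W\right\|^2_{L^{\frac{2q}{q-1}}((-\frac{\pi}{2},\frac{\pi}{2}),L^{\frac{2p}{p-1}}_{k,a}(\mathbb{R}^n))},
\end{equation*}
which is precisely the conclusion of Proposition \ref{general-schatten-bound} with its Schatten exponent and spatial weight exponent both equal to $\frac{2p}{p-1}$ and its temporal weight exponent equal to $\frac{2q}{q-1}$.

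With this dictionary in hand, the constraint of Proposition \ref{general-schatten-bound}, namely $\frac{1}{\tilde q}+\frac{2\gamma+n+a-2}{\tilde p\,a}=\frac12$ with $\tilde p=\frac{2p}{p-1}$ and $\tilde q=\frac{2q}{q-1}$, reduces exactly to the admissibility relation $\frac{1}{q}+\frac{2\gamma+n+a-2}{pa}=\frac{2\gamma+n+a-2}{a}$ of the present theorem, while the range $\frac{4\gamma+2n+3a-4}{a}<\tilde p\le\frac{2(2\gamma+n+2a-2)}{a}$ becomes $\frac{2\gamma+n+2a-2}{2\gamma+n+a-2}\le p<\frac{4\gamma+2n+3a-4}{4\gamma+2n+a-4}$. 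Hence Proposition \ref{general-schatten-bound} together with Lemma \ref{duality-principle} already yields \eqref{p-q range} on this upper subinterval, whose left endpoint $p=\frac{2\gamma+n+2a-2}{2\gamma+n+a-2}$ is exactly the diagonal case of Theorem \ref{diagonal-restriction}. It then remains only to treat $1\le p<\frac{2\gamma+n+2a-2}{2\gamma+n+a-2}$.

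For this lower subinterval I would first record the trivial endpoint $(p,q)=(1,\infty)$. Since $\left(\frac1p,\frac1q\right)=\left(\frac12,0\right)$ is admissible, Theorem \ref{Restriction-single} gives $\|\mathcal{E}_S\{\hat F_\iota\}\|_{L^\infty((-\frac{\pi}{2},\frac{\pi}{2}),L^2_{k,a}(\mathbb{R}^n))}\le C$ for each member of an orthonormal system, and the triangle inequality in $L^\infty((-\frac{\pi}{2},\frac{\pi}{2}),L^1_{k,a}(\mathbb{R}^n))$ gives
\begin{equation*}
\left\|\sum_{\iota\in I}n_\iota\left|\mathcal{E}_S\{\hat F_\iota\}\right|^2\right\|_{L^\infty((-\frac{\pi}{2},\frac{\pi}{2}),L^1_{k,a}(\mathbb{R}^n))}\le\sum_{\iota\in I}|n_\iota|\,\left\|\mathcal{E}_S\{\hat F_\iota\}\right\|^2_{L^\infty((-\frac{\pi}{2},\frac{\pi}{2}),L^2_{k,a}(\mathbb{R}^n))}\le C\sum_{\iota\in I}|n_\iota|,
\end{equation*}
which is \eqref{p-q range} at $(1,\infty)$ with right-hand side $\|\{n_\iota\}\|_{\ell^1}$; in Schatten language this is the boundedness of $W\mathcal{T}_S\overline W$, i.e. the $\mathcal{G}^\infty$ endpoint. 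I would then interpolate between this $\mathcal{G}^\infty$ bound and the diagonal $\mathcal{G}^{2\lambda_0}$ bound of Lemma \ref{as}, with $\lambda_0=1+\frac{2\gamma+n+a-2}{a}$. Because the relations $\frac1{\lambda'}=\frac12+\frac1{2p}$ and $\frac1q=\frac{2\gamma+n+a-2}{a}\bigl(1-\frac1p\bigr)$ are affine in $\frac1p$ and hold at both endpoints, they persist along the whole interpolation scale, producing \eqref{p-q range} for every $p\in\bigl(1,\frac{2\gamma+n+2a-2}{2\gamma+n+a-2}\bigr)$ and thereby completing the range $1\le p<\frac{4\gamma+2n+3a-4}{4\gamma+2n+a-4}$.

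The step I expect to be the main obstacle is this last interpolation, because \eqref{p-q range} must hold \emph{uniformly} over all (possibly infinite) orthonormal systems, while the map $\{n_\iota\}\mapsto\sum_{\iota}n_\iota|\mathcal{E}_S\{\hat F_\iota\}|^2$ depends on the chosen system, so one cannot naively interpolate a single fixed operator acting on the coefficients. The clean remedy, following Frank--Sabin, is to interpolate at the level of the equivalent Schatten bounds furnished by Lemma \ref{duality-principle}: these are statements about the fixed operator $\mathcal{E}_S\mathcal{E}_S^{*}=\mathcal{T}_S$ conjugated by multiplication weights, and since $\{\mathcal{G}^r\}$ forms a complex interpolation scale one may apply Stein interpolation (to $\{T_z\}$ of Section \ref{s4}, now carrying the weights) between the $\mathcal{G}^\infty$ and $\mathcal{G}^{2\lambda_0}$ endpoints. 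One then only needs the routine bookkeeping that the temporal and spatial weight exponents $\frac{2q}{q-1}$ and $\frac{2p}{p-1}$ interpolate to the correct intermediate values, after which Lemma \ref{duality-principle} converts the intermediate Schatten bound back into \eqref{p-q range} on the missing interval.
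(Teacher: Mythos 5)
Your proposal is correct and follows essentially the same route as the paper: the trivial $(p,q)=(1,\infty)$ endpoint (equivalently the $\mathcal{G}^\infty$ bound for $W\mathcal{T}_S\overline{W}$) is interpolated against the diagonal $\mathcal{G}^{2\lambda_0}$ bound of Lemma \ref{as} to cover the lower range of $p$, Proposition \ref{general-schatten-bound} supplies the remaining Schatten exponents $\frac{4\gamma+2n+3a-4}{a}<\tilde p\le\frac{2(2\gamma+n+2a-2)}{a}$, and Lemma \ref{duality-principle} converts everything back to the orthonormal-system statement. Your exponent dictionary and your remark that the interpolation must be performed on the fixed operator $W\mathcal{T}_S\overline W$ in the Schatten scale (rather than system by system) match the paper's argument exactly.
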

	
	\begin{proof}
		Using the fact that the operator $e^{-i t \Delta_{k, a}}$ is unitary, 	 we have
		$$
		\left\|\sum_{i \in \iota } n_{\iota }\left|e^{-i t \Delta_{k, a}}f_{\iota }\right|^{2}\right\|_{L^\infty ((-\frac{\pi}{2},\frac{\pi}{2}), L_{k,a}^1(\mathbb{R}^n))} \leqslant \sum_{\iota \in I}\left|n_{\iota }\right|,
		$$
		for any (possible infinity) system $\left\{f_{\iota}\right\}_{\iota \in I }$ of orthonormal functions in $L^{2}\left(\mathbb{R}_{+}^{n}, d w_{\alpha}\right)$ and any coefficients $\left\{n_{\iota}\right\}_{\iota \in I }$ in $\mathbb{C}$. It is equivalent to
		$$
		\left \| \sum_{\iota \in I} n_{\iota } \left|  \mathcal{E}_{S} \left\{\hat{F}_{\iota }(\nu, \ell, m , j)\right\} \right|^{2} \right\|_{L^\infty ((-\frac{\pi}{2},\frac{\pi}{2}), L_{k,a}^1(\mathbb{R}^n))} \leqslant \sum_{\iota \in I}\left|n_{\iota}\right|, 
		$$
		for any (possible infinity) orthonormal system $\left\{\{\hat{F_\iota}(\nu, \ell, m , j)\}_{(\nu, \ell, m , j)\in \mathbb{Z}\times \mathcal{A}}\right\}_{\iota \in{I}}$ in $\ell^{2}(S)$ and any sequence $\left\{n_{\iota}\right\}_{\iota \in I }$ in $\mathbb{C}$.
		
		By Lemma \ref{duality-principle}, equivalently, we have the operator
		\begin{equation*}
			W\in L^2((-\frac{\pi}{2},\frac{\pi}{2}), L_{k,a}^\infty(\mathbb{R}^n))\longmapsto W\mathcal{T}_S\overline{W}\in \mathcal{G}^\infty\left(L^2((-\frac{\pi}{2},\frac{\pi}{2}), L_{k,a}^2(\mathbb{R}^n))\right)
		\end{equation*}
		is bounded. By Lemma \ref{as}, the operator 
		\begin{align*}
			&W\in L^{\frac{2(2\gamma+n+2a-2)}{a}}((-\frac{\pi}{2},\frac{\pi}{2}), L_{k,a}^{\frac{2(2\gamma+n+2a-2)}{a}}(\mathbb{R}^n))\\
			&\longmapsto W\mathcal{T}_S\overline{W}\in \mathcal{G}^{2(1+\frac{2\gamma+n+a-2}{a})}\left(L^2((-\frac{\pi}{2},\frac{\pi}{2}), L_{k,a}^2(\mathbb{R}^n))\right)
		\end{align*}
		is also bounded. Applying the complex interpolation method in the Chapter 4 of \cite{BL}, the operator
		\begin{equation*}
			W\in L^q((-\frac{\pi}{2},\frac{\pi}{2}), L_{k,a}^p(\mathbb{R}^n))\longmapsto W\mathcal{T}_S\overline{W}\in \mathcal{G}^p\left(L^2((-\frac{\pi}{2},\frac{\pi}{2}), L_{k,a}^2(\mathbb{R}^n,dw))\right)
		\end{equation*}
		is bounded for any $p,q\ge1$ satisfying
		\begin{equation*}
			\frac{1}{q}+\frac{2 \gamma+n+a-2}{pa}=\frac{1}{2},\quad \frac{2(2\gamma+n+2a-2)}{a}\leq p\leq\infty.
		\end{equation*}		
		Combined with Lemma \ref{general-schatten-bound}, we have the operator
		\begin{equation*}
			W\in L^q((-\frac{\pi}{2},\frac{\pi}{2}), L_{k,a}^p(\mathbb{R}^n))\longmapsto W\mathcal{T}_S\overline{W}\in \mathcal{G}^p\left(L^2((-\frac{\pi}{2},\frac{\pi}{2}), L_{k,a}^2(\mathbb{R}^n,dw))\right)
		\end{equation*} is still bounded for any $p,q\ge1$ satisfying
		\begin{equation*}
			\frac{1}{q}+\frac{2 \gamma+n+a-2}{pa}=\frac{1}{2},\quad \frac{4\gamma+2n+3a-4}{a}<p\leq\infty.
		\end{equation*}
		Again by Theorem \ref{Restriction-single} and Lemma \ref{duality-principle}, 
		we obtain our desired estimates.
	\end{proof}
	
	As a consequences of the above result, we   obtain the following Strichartz inequalities for the system of orthonormal functions.
	
	\begin{theorem} [Strichartz inequalities for orthonormal functions-general case]\label{strichartz-general} If $p, q, n \geqslant$ 1 such that
		$$
		1 \leqslant p<\frac{4\gamma+2n+3a-4}{  4\gamma+2n+a-4} \quad \text { and } \quad 	\frac{1}{q}+\frac{2 \gamma+n+a-2}{pa}=\frac{2 \gamma+n+a-2}{a},
		$$
		then for any (possible infinity) system $\left\{f_{\iota}\right\}_{\iota \in I }$ of orthonormal functions in $L_{k,a}^{2}\left(\mathbb{R}^{n}\right)$ and any coefficients $\left\{n_{\iota}\right\}_{\iota \in I }$ in $\mathbb{C}$, we have
		$$
		\left\|\sum_{  \iota \in I} n_{\iota }\left|e^{-i t \Delta_{k, a}}f_{\iota }\right|^{2}\right\|_{L^q ((-\frac{\pi}{2},\frac{\pi}{2}), L_{k,a}^p(\mathbb{R}^n))} \leqslant C\left(\sum_{\iota \in I}\left|n_{\iota }\right|^{\frac{2 p}{p+1}}\right)^{\frac{p+1}{2 p}},
		$$
		where $C>0$ is independent of $\left\{f_{\iota}\right\}_{\iota\in I}$ and $\left\{n_{\iota}\right\}_{\iota \in I}$.
	\end{theorem}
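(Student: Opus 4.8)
The plan is to derive this Strichartz inequality as an immediate corollary of the restriction estimate for orthonormal functions in the general case proved just above, by transporting that statement through the identification \eqref{eq11}. All of the genuine analysis—the general Schatten bound of Proposition \ref{general-schatten-bound}, the complex interpolation, and the duality principle of Lemma \ref{duality-principle}—has already been spent in establishing the restriction theorem, so here I would only need to match up the two sides via the correspondence between functions on $\mathbb{R}^n$ and sequences supported on the discrete surface $S$.

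Concretely, I would start from an arbitrary orthonormal family $\{f_\iota\}_{\iota\in I}$ in $L^2_{k,a}(\mathbb{R}^n)$ and, following the construction at the beginning of Section \ref{s3}, set $\hat{F}_\iota(\nu,\ell,m,j):=\hat{f}_\iota(\ell,m,j)$ when $\nu=2\ell+\tfrac{2m}{a}$ and $\hat{F}_\iota:=0$ otherwise. The map $(\ell,m,j)\mapsto(2\ell+\tfrac{2m}{a},\ell,m,j)$ is a bijection from $\mathcal{A}$ onto $S$, so by the Plancherel formula \eqref{eq10} this embedding is isometric and preserves inner products, giving
\[
\langle \hat{F}_\iota,\hat{F}_{\iota'}\rangle_{\ell^2(S)}=\langle f_\iota,f_{\iota'}\rangle_{L^2_{k,a}(\mathbb{R}^n)}=\delta_{\iota\iota'}.
\]
Hence $\{\hat{F}_\iota\}_{\iota\in I}$ is an orthonormal system in $\ell^2(S)$, which is precisely the input required by the restriction theorem.

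Next I would invoke \eqref{eq11}, by which $\mathcal{E}_S(\{\hat{F}_\iota(\nu,\ell,m,j)\})(t,x)=e^{it(1+\frac{1}{a}(2\gamma+n-2))}\,e^{-it\Delta_{k,a}}f_\iota(x)$. The leading factor is unimodular, so taking absolute values squared removes it and yields the pointwise identity
\[
\sum_{\iota\in I}n_\iota\bigl|e^{-it\Delta_{k,a}}f_\iota\bigr|^2=\sum_{\iota\in I}n_\iota\bigl|\mathcal{E}_S\{\hat{F}_\iota(\nu,\ell,m,j)\}\bigr|^2 .
\]
Taking the $L^q((-\tfrac{\pi}{2},\tfrac{\pi}{2}),L^p_{k,a}(\mathbb{R}^n))$ norm of both sides and applying the restriction estimate for orthonormal functions—whose hypotheses on $(p,q)$ are identical to those assumed here—bounds the right-hand side by $C(\sum_\iota|n_\iota|^{2p/(p+1)})^{(p+1)/(2p)}$, which is exactly the claim. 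Since every step is either an exact identity or a direct appeal to an already established estimate, I do not expect a real obstacle; the only point that warrants care is verifying the isometric correspondence of the second paragraph, ensuring that orthonormality in $L^2_{k,a}(\mathbb{R}^n)$ is transferred faithfully to $\ell^2(S)$ so that the restriction theorem is genuinely applicable.
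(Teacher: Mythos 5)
Your proposal is correct and follows exactly the route the paper intends: the paper derives this theorem as an immediate consequence of the general-case restriction estimate via the identity \eqref{eq11}, with the unimodular prefactor $e^{it(1+\frac{1}{a}(2\gamma+n-2))}$ disappearing upon taking $|\cdot|^2$, just as you describe. Your explicit verification that the embedding $(\ell,m,j)\mapsto(2\ell+\tfrac{2m}{a},\ell,m,j)$ transfers orthonormality from $L^2_{k,a}(\mathbb{R}^n)$ to $\ell^2(S)$ is a detail the paper leaves implicit but is exactly the right point to check.
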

	
	\section{Orthonormal  Strichartz inequalities for the  Dunkl operator}\label{s6}
	In this section we prove   Strichartz estimate for system of orthonormal functions associated with    Dunkl operator $\Delta_{k} $  using  the relation between the   semigroups corresponding to    $(k, a)$-generalized Laguerre operator  $\Delta_{k, a} $ and  the Dunkl operator $\Delta_{k} $.
	
	Corresponding to the Dunkl operator $\Delta_{k} $, we have the solution	$e^{i \frac{t}{a}\|x\|^{2-a} \Delta_{k}} f$   to the free Schr\"odinger equation 
	$$ \begin{cases}
		i \partial_{t} w(t, x)+\frac{1}{a}\|x\|^{2-a} \Delta_{k} w(t, x)=0, & (t, x) \in \mathbb{R} \times\mathbb{R}^n, \\
		w(0,x)=f(x), & x \in \mathbb{R}^n.
	\end{cases} $$
	For $a=1,2$ and $k$ is a non-negative multiplicity function such that
	$
	a+2 \gamma+n-2>0
	$, from  Theorem \ref{111}, the classical estimates in this case    may be stated as  
	\begin{equation*}
		\|e^{i \frac{t}{a}\|x\|^{2-a} \Delta_{k}} f\|_{L^{q}\left( \mathbb{R},L_{k,a}^p(\mathbb{R}^n)\right)}\leq C\|f\|_{L_{k, a}^{2}\left(\mathbb{R}^{n}\right)}
	\end{equation*}
	under the same conditions on $p$ and $q$ is that 
	$1\leq p, q\leq\infty$ satisfies $$\left(\frac{1}{2}-\frac{1}{p}\right)\frac{2 \gamma+n+a-2}{a}-\frac{1}{q}=0.$$
	As a consequences of the above result, we   obtain the following Strichartz inequality associated with Dunkl operator  for the system of orthonormal functions.
	
	\begin{theorem} [Strichartz inequalities for orthonormal functions associated with Dunkl operator-general case]\label{OSI-Dunkl} If $p, q, n \geqslant$ 1 such that
		$$
		1 \leqslant p<\frac{4\gamma+2n+3a-4}{  4\gamma+2n+a-4} \quad \text { and } \quad 	\frac{1}{q}+\frac{2 \gamma+n+a-2}{pa}=\frac{2 \gamma+n+a-2}{a},
		$$
		then for any (possible infinity) system $\left\{f_{\iota}\right\}_{\iota \in I }$ of orthonormal functions in $L_{k,a}^{2}\left(\mathbb{R}^{n}\right)$ and any coefficients $\left\{n_{\iota}\right\}_{\iota \in I }$ in $\mathbb{C}$, we have
		$$
		\left\|\sum_{\iota\in I} n_{\iota}\left|e^{i\frac{ t}{a} \|x\|^{2-a}\Delta_k} f_{\iota}\right|^{2} \right\|_{L^q (\mathbb{R}, L_{k,a}^p(\mathbb{R}^n))} \leqslant C\left(\sum_{\iota \in I}\left|n_{\iota }\right|^{\frac{2 p}{p+1}}\right)^{\frac{p+1}{2 p}},
		$$
		where $C>0$ is independent of $\left\{f_{\iota}\right\}_{\iota\in I}$ and $\left\{n_{\iota}\right\}_{\iota \in I}$.
	\end{theorem}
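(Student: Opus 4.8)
The plan is to deduce the statement directly from the orthonormal Strichartz inequality for the generalized Laguerre propagator (Theorem \ref{strichartz-general}) by transporting it through the explicit relation between the two semigroups recorded just before Theorem \ref{111}. Writing $s=\tan t$ with $t\in(-\frac{\pi}{2},\frac{\pi}{2})$, that relation reads
\[
e^{-i t\Delta_{k,a}}f(x)=(1+s^2)^{\frac{2\gamma+n+a-2}{2a}}\,e^{-is\frac{\|x\|^a}{a}}\,e^{i\frac{s}{a}\|x\|^{2-a}\Delta_k}f\big((1+s^2)^{1/a}x\big),
\]
for every $f\in L^2_{k,a}(\mathbb{R}^n)$. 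Applying this to each member $f_\iota$ of the orthonormal family and writing $u_\iota(t,x)=e^{-it\Delta_{k,a}}f_\iota(x)$ and $w_\iota(s,x)=e^{i\frac{s}{a}\|x\|^{2-a}\Delta_k}f_\iota(x)$, the modulus-one phase $e^{-is\|x\|^a/a}$, the dilation $x\mapsto(1+s^2)^{1/a}x$, and the scalar prefactor are all common to every index $\iota$. Hence, upon taking squared moduli and summing against $\{n_\iota\}$, I obtain the pointwise identity
\[
\sum_{\iota\in I}n_\iota|u_\iota(t,x)|^2=(1+s^2)^{\frac{2\gamma+n+a-2}{a}}\sum_{\iota\in I}n_\iota\big|w_\iota\big(s,(1+s^2)^{1/a}x\big)\big|^2.
\]

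Next I would compute the spatial $L^p_{k,a}$ norm of each side. Substituting $y=(1+s^2)^{1/a}x$ and using that $v_{k,a}$ is homogeneous of degree $a-2+2\gamma$, one gets $v_{k,a}(x)\,dx=(1+s^2)^{-\frac{a-2+2\gamma+n}{a}}v_{k,a}(y)\,dy$. Combining this Jacobian with the $p$-th power of the prefactor $(1+s^2)^{\frac{2\gamma+n+a-2}{a}}$, the powers of $(1+s^2)$ collapse to $(p-1)\frac{2\gamma+n+a-2}{a}$, so that
\[
\Big\|\sum_{\iota}n_\iota|u_\iota(t,\cdot)|^2\Big\|_{L^p_{k,a}(\mathbb{R}^n)}=(1+s^2)^{\frac{p-1}{p}\cdot\frac{2\gamma+n+a-2}{a}}\Big\|\sum_{\iota}n_\iota|w_\iota(s,\cdot)|^2\Big\|_{L^p_{k,a}(\mathbb{R}^n)}.
\]
Finally I would integrate in time: since $s=\tan t$ maps $(-\frac{\pi}{2},\frac{\pi}{2})$ bijectively onto $\mathbb{R}$ with $dt=(1+s^2)^{-1}ds$, raising the last display to the power $q$ and integrating produces the weight $(1+s^2)^{q\frac{p-1}{p}\cdot\frac{2\gamma+n+a-2}{a}-1}$ multiplying $\|\sum_\iota n_\iota|w_\iota(s,\cdot)|^2\|_{L^p_{k,a}}^q$.

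The decisive point—and the only place the hypotheses enter—is that this residual exponent vanishes identically: the admissibility relation $\frac{1}{q}+\frac{2\gamma+n+a-2}{pa}=\frac{2\gamma+n+a-2}{a}$ is precisely equivalent to $q\frac{p-1}{p}\cdot\frac{2\gamma+n+a-2}{a}=1$, so the $(1+s^2)$ factor disappears and I obtain the isometry
\[
\Big\|\sum_{\iota}n_\iota|e^{-it\Delta_{k,a}}f_\iota|^2\Big\|_{L^q((-\frac{\pi}{2},\frac{\pi}{2}),L^p_{k,a}(\mathbb{R}^n))}=\Big\|\sum_{\iota}n_\iota\big|e^{i\frac{t}{a}\|x\|^{2-a}\Delta_k}f_\iota\big|^2\Big\|_{L^q(\mathbb{R},L^p_{k,a}(\mathbb{R}^n))}.
\]
This is exactly the orthonormal refinement of Theorem \ref{111}. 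Because $\{f_\iota\}$ is orthonormal in $L^2_{k,a}(\mathbb{R}^n)$ and $(p,q)$ lies in the stated range, Theorem \ref{strichartz-general} bounds the left-hand side by $C\big(\sum_\iota|n_\iota|^{2p/(p+1)}\big)^{(p+1)/2p}$, yielding the claim. The computation is elementary once organized this way; the thing to watch is the bookkeeping of the homogeneity of $v_{k,a}$ and of the Jacobian of $s=\tan t$, together with the verification that the admissibility identity is exactly what forces the scaling weights to cancel, so that an \emph{equality} (not merely an inequality) holds between the two mixed norms. Note that orthonormality is not needed in the change of variables itself—it is used only through the invocation of Theorem \ref{strichartz-general}.
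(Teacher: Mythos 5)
Your proposal is correct and follows essentially the same route as the paper: both transport the orthonormal bound of Theorem \ref{strichartz-general} through the semigroup relation $e^{-i\tan^{-1}(s)\Delta_{k,a}}f(x)=(1+s^2)^{\frac{2\gamma+n+a-2}{2a}}e^{-is\|x\|^a/a}e^{i\frac{s}{a}\|x\|^{2-a}\Delta_k}f((1+s^2)^{1/a}x)$ via the substitution $s=\tan t$, using the admissibility identity to cancel the scaling weights and obtain equality of the two mixed norms. The only difference is that you write out the Jacobian and homogeneity bookkeeping explicitly (and correctly), where the paper compresses it to ``as in Theorem \ref{111} for a single function.''
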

	\begin{proof}
		Let  $\Gamma_{k, a}(x, y ; i t)$  and $\Lambda_{k, a}(x, y ; i t) $ be the kernel of $e^{i \frac{t}{a}\|x\|^{2-a} \Delta_{k}}$ and $e^{-i t\Delta_{k, a}}$, respectively. Then we know that, using the change of variable $s=\tan (t)$ with $t \in(-\pi / 2, \pi / 2),$ we get
		\begin{align*}
			\Lambda_{k, a}(x, y ; i \tan^{-1} s)&=c_{k, a}^{-1}\left(1+s^{2}\right)^{\frac{2 y+n+a-2}{2 a}} \exp \left(-i s \frac{\|x\|^{a}}{a}\right) \Gamma_{k, a}\left(\left(1+s^{2}\right)^{\frac{1}{a}} x, y ; i s\right) 
		\end{align*}
		for any $s>0$.  From this, and using the scaling condition  $\frac{1}{q}+\frac{2 \gamma+n+a-2}{pa}=\frac{2 \gamma+n+a-2}{a}$, it follows (as in Theorem \ref{111} for a single function) that 
		\begin{align*}
			\left\|\sum_{\iota\in I} n_{\iota}\left|e^{i\frac{ t}{a} \|x\|^{2-a}\Delta_k} f_{\iota}\right|^{2} \right\|_{L^{q}\left( ( 0, +\infty),L_{k,a}^p(\mathbb{R}^n)\right)}&=\left\|\sum_{  \iota \in I} n_{\iota }\left|e^{-i t \Delta_{k, a}}f_{\iota }\right|^{2}\right\|_{L^{q}\left( ( 0, \frac{\pi }{2}),L_{k,a}^p(\mathbb{R}^n)\right)},
		\end{align*}
		and
		\begin{align*}
			\left\|\sum_{\iota\in I} n_{\iota}\left|e^{i\frac{ t}{a} \|x\|^{2-a}\Delta_k} f_{\iota}\right|^{2} \right\|_{L^{q}\left( (-\infty,0),L_{k,a}^p(\mathbb{R}^n)\right)}&=\left\|\sum_{  \iota \in I} n_{\iota }\left|e^{-i t \Delta_{k, a}}f_{\iota }\right|^{2}\right\|_{L^{q}\left( (-\frac{\pi }{2},0),L_{k,a}^p(\mathbb{R}^n)\right)},
		\end{align*}
		for any system of orthonormal functions $\left\{f_{\iota}\right\}_{\iota\in I}$ in $L_{k,a}^2(\mathbb{R}^n)$ and any coefficients $\left\{n_{\iota}\right\}_{\iota \in I}$ in $\mathbb{C}$. Hence, we immediately obtain our desired results from Theorem \ref{strichartz-general}.
	\end{proof}
	
	\section{Final remarks}\label{s7}
	\begin{enumerate}
		\item When $a=2$ and $k\in\mathcal{K}^+$, then Theorem \ref{OSI-Dunkl} becomes: if
		$$
		p, q, n \geqslant 1,\quad 1 \leqslant p<\frac{2\gamma+n+1}{  2\gamma+n-1} \quad \text { and } \quad 	\frac{2}{q}+\frac{2 \gamma+n}{p}=2 \gamma+n,
		$$
		for any (possible infinity) system $\left\{f_{\iota}\right\}_{\iota \in I }$ of orthonormal functions in $L_{k}^{2}\left(\mathbb{R}^{n}\right)$ and any coefficients $\left\{n_{\iota}\right\}_{\iota \in I }$ in $\mathbb{C}$, we have
		$$
		\left\|\sum_{\iota\in I} n_{\iota}\left|e^{it\Delta_k} f_{\iota}\right|^{2} \right\|_{L^q (\mathbb{R}, L_{k}^p(\mathbb{R}^n))} \leqslant C\left(\sum_{\iota \in I}\left|n_{\iota }\right|^{\frac{2 p}{p+1}}\right)^{\frac{p+1}{2 p}},
		$$
		which generalizes the known Strichartz estimates for the Schr\"odinger-Dunkl equation in \cite{Majjaoli}  involving systems of orthonormal functions.	 
		
		Moreover, when $a=2$ and $k \equiv 0$, then Theorem \ref{OSI-Dunkl} reduces: if
		$$
		p, q, n \geqslant 1,\quad	1 \leqslant p<\frac{n+1}{  n-1} \quad \text { and } \quad 	\frac{2}{q}+\frac{n}{p}={n},
		$$
		for any (possible infinity) system $\left\{f_{\iota}\right\}_{\iota \in I }$ of orthonormal functions in $L^{2}\left(\mathbb{R}^{n}\right)$ and any coefficients $\left\{n_{\iota}\right\}_{\iota \in I }$ in $\mathbb{C}$, we have
		$$
		\left\|\sum_{\iota\in I} n_{\iota}\left|e^{it \Delta} f_{\iota}\right|^{2} \right\|_{L^q (\mathbb{R}, L^p(\mathbb{R}^n))} \leqslant C\left(\sum_{\iota \in I}\left|n_{\iota }\right|^{\frac{2 p}{p+1}}\right)^{\frac{p+1}{2 p}},
		$$
		which is nothing but the classical orthonormal Strichartz inequality associated with Laplacian $\Delta$ on $\mathbb{R}^n$ proved  in \cite{frank, FS}.
		
		\item When $a=2$ and $k\in\mathcal{K}^+$, then Theorem \ref{strichartz-general} becomes the orthonormal Strichartz inequality for the Schr\"odinger equation associated with the Dunkl-Hermite operator $H_k$ which states: if
		
		$$p, q, n \geqslant 1,\quad 
		1 \leqslant p<\frac{2\gamma+n+1}{  2\gamma+n-1} \quad \text { and } \quad 	\frac{2}{q}+\frac{2 \gamma+n}{p}=2 \gamma+n,
		$$
		for any (possible infinity) system $\left\{f_{\iota}\right\}_{\iota \in I }$ of orthonormal functions in $L_{k}^{2}\left(\mathbb{R}^{n}\right)$ and any coefficients $\left\{n_{\iota}\right\}_{\iota \in I }$ in $\mathbb{C}$, we have
		$$
		\left\|\sum_{  \iota \in I} n_{\iota }\left|e^{-i t H_k} f_{\iota }\right|^{2}\right\|_{L^q ((-\frac{\pi}{2},\frac{\pi}{2}), L_{k}^p(\mathbb{R}^n))} \leqslant C\left(\sum_{\iota \in I}\left|n_{\iota }\right|^{\frac{2 p}{p+1}}\right)^{\frac{p+1}{2 p}},
		$$
		which generalizes the known Strichartz estimates for the Dunkl-Hermite operator in Theorem 4.5 of \cite{Majjaoli}.
		
		Moreover, when $a=2$ and $k \equiv 0$, then Theorem \ref{strichartz-general} reduces to: for
		$$
		1 \leqslant p<\frac{n+1}{  n-1} \quad \text { and } \quad 	\frac{2}{q}+\frac{n}{p}={n}
		$$
		and any (possible infinity) system $\left\{f_{\iota}\right\}_{\iota \in I }$ of orthonormal functions in $L^{2}\left(\mathbb{R}^{n}\right)$ and any coefficients $\left\{n_{\iota}\right\}_{\iota \in I }$ in $\mathbb{C}$, we have
		$$
		\left\|\sum_{\iota\in I} n_{\iota}\left|e^{-it H} f_{\iota}\right|^{2} \right\|_{L^q ((-\frac{\pi}{2},\frac{\pi}{2}), L^p(\mathbb{R}^n))} \leqslant C\left(\sum_{\iota \in I}\left|n_{\iota }\right|^{\frac{2 p}{p+1}}\right)^{\frac{p+1}{2 p}},
		$$
		which is nothing but the classic Strichartz inequality for systems of  orthonormal functions associated with the Hermite operator  $H$ on $\mathbb{R}^n$ proved    in \cite{shyam, lee}.
		
		\item When $a=1$ and $k\in\mathcal{K}^+$, then Theorem \ref{strichartz-general} generalizes the Strichartz inequality for the Schr\"odinger propagator associated with the Dunkl-Laguerre operator $L_k$ on $\mathbb{R}^n$ (see Theorem A in \cite{Ben said}) involving systems of orthonormal functions and it states that: 
		If $p, q, n \geqslant$ 1 such that
		$$
		1 \leqslant p<\frac{ 4\gamma+2n-1}{4\gamma+2n-3} \quad \text { and } \quad 	\frac{1}{q}+\frac{ 2\gamma+n-1}{p}= 2\gamma+n-1,
		$$
		we have
		$$
		\left\|\sum_{\iota\in I} n_{\iota}\left|e^{-it L_k} f_{\iota}\right|^{2} \right\|_{L^q ((-\frac{\pi}{2},\frac{\pi}{2}), L_{k,1}^p(\mathbb{R}^n))} \leqslant C\left(\sum_{\iota \in I}\left|n_{\iota }\right|^{\frac{2 p}{p+1}}\right)^{\frac{p+1}{2 p}},
		$$ 
		for any (possible infinity) system $\left\{f_{\iota}\right\}_{\iota \in I }$ of orthonormal functions in $L_{k,1}^{2}\left(\mathbb{R}^{n}\right)$ and any coefficients $\left\{n_{\iota}\right\}_{\iota \in I }$ in $\mathbb{C}$. Besides, it reduces the orthonormal Strichartz inequality for the propagator $e^{it\|x\|\Delta_k}$ which is a generalization of Theorem D in \cite{Ben said}. Moreover, when $k\equiv0$, we can also obtain the orthonormal Strichartz inequality associated with the classical Laguerre operator $L$. We shall not list any more. We just intend to show the  generality of our approach. 
	\end{enumerate}

	\section*{Acknowledgments}
The first     author is supported by Core Research Grant (RP03890G), Science and Engineering Research Board (SERB), DST, India.	He also wishes to thank Indian Institute of Technology Delhi. The second author is supported by the National Natural Science Foundation of China (Grant No. 11701452) and the Natural Science Foundation of Shaanxi Province (Grant No. 2020JQ-112).

\end{document}